\numberwithin{equation}{section}
\numberwithin{figure}{section}
  \theoremstyle{plain}
  \newtheorem*{thm*}{\protect\theoremname}
  \theoremstyle{plain}
  \newtheorem*{cor*}{\protect\corollaryname}
\theoremstyle{plain}
\newtheorem{thm}{\protect\theoremname}[section]
  \theoremstyle{remark}
  \newtheorem{rem}[thm]{\protect\remarkname}
  \theoremstyle{definition}
  \newtheorem{defn}[thm]{\protect\definitionname}
  \theoremstyle{remark}
  \newtheorem*{rem*}{\protect\remarkname}
  \theoremstyle{definition}
  \newtheorem{example}[thm]{\protect\examplename}
  \theoremstyle{plain}
  \newtheorem{prop}[thm]{\protect\propositionname}
  \theoremstyle{plain}
  \newtheorem{lem}[thm]{\protect\lemmaname}
  \theoremstyle{plain}
  \newtheorem{cor}[thm]{\protect\corollaryname}
\makeatletter \newcommand{\xyR}[1]{%
\makeatletter \xydef@\xymatrixrowsep@{#1} \makeatother }
\makeatletter \newcommand{\xyC}[1]{%
\makeatletter \xydef@\xymatrixcolsep@{#1} \makeatother }
\DeclareSymbolFont{rsfs}{U}{rsfs}{m}{n}
\DeclareSymbolFontAlphabet{\mathrf}{rsfs}
  \providecommand{\corollaryname}{Corollary}
  \providecommand{\definitionname}{Definition}
  \providecommand{\examplename}{Example}
  \providecommand{\lemmaname}{Lemma}
  \providecommand{\propositionname}{Proposition}
  \providecommand{\remarkname}{Remark}
  \providecommand{\theoremname}{Theorem}
\providecommand{\theoremname}{Theorem}
\begin{document}

\begin{frontmatter}{}

\title{Steenrod Coalgebras}

\author{Justin R. Smith}

\address{Department of Mathematics\\
Drexel University\\
Philadelphia, PA 19104}

\ead{jsmith@drexel.edu}

\ead[url]{http://vorpal.math.drexel.edu}

\global\long\def\ring{\mathscr{R}}

\global\long\def\rfrac{\mathbb{F}}

\global\long\def\field{\rfrac}

\global\long\def\integers{\mathbb{Z}}
\global\long\def\coend{\mathrm{CoEnd}}
 \global\long\def\coassoc{\mathrm{Coassoc}}
\global\long\def\homa{\mathrm{Hom}}
\global\long\def\zend{\mathrm{End}}
\global\long\def\rs#1{\mathrm{R}S_{#1 }}
\global\long\def\forgetful#1{\lceil#1\rceil}
\global\long\def\zs#1{\mathbb{Z}S_{#1 }}
\global\long\def\homzs#1{\mathrm{Hom}_{\mathbb{Z}S_{#1 }}}
\global\long\def\rings#1{\ring S_{#1}}
\global\long\def\zpi{\mathbb{Z}\pi}
\global\long\def\cf#1{C(#1 )}
\global\long\def\ddelta{\dot{\Delta}}
\global\long\def\dimlimiter{\triangleright}
\global\long\def\coalgcat{\mathrf S_{0}}
\global\long\def\ccoalgcat{\mathrf S_{\mathrm{cell}}}
\global\long\def\hcoalgcat{\mathrf{S}}
\global\long\def\ircoalgcat{\mathrf I_{0}}
\global\long\def\bircoalgcat{\mathrf{I}_{0}^{+}}
\global\long\def\hircoalgcat{\mathrf I}
\global\long\def\chaincat{\mathbf{Ch}}

\global\long\def\chaincatr{\chaincat_{0}}
\global\long\def\chaincatp{\chaincat_{+}}
\global\long\def\coalgcat{\mathrf S_{0}}
\global\long\def\bchaincat{\chaincatr}

\global\long\def\coll{\mathrm{Coll}}
\global\long\def\ilimit{\varprojlim\,}
\global\long\def\dlimit{\varinjlim\,}
\global\long\def\dilimit{{\varprojlim}^{1}\,}
\global\long\def\spaces{\mathbf{S}_{0}}

\global\long\def\coker{\mathrm{{coker}}}
\global\long\def\lcell{L_{\mathrm{cell}}}
\global\long\def\ccoalgcat{\mathrf S_{\mathrm{cell}}}
\global\long\def\coS{\mathbf{coS}}
\global\long\def\cocell{\mathbf{co}\ccoalgcat}
\global\long\def\realization{\mathcal{H}}
\global\long\def\totalspace#1{\mathrm{Tot}(#1)}
\global\long\def\totalcoalg#1{\mathrm{Tot}_{\ccoalgcat}(#1)}
\global\long\def\cofc#1{\mathbf{co}\fc#1}

\global\long\def\ccoalgcat{\mathrf S_{\mathrm{cell}}}
\global\long\def\hcoalgcat{\mathrf{S}}
\global\long\def\ircoalgcat{\mathrf I_{0}}
\global\long\def\bircoalgcat{\mathrf{I}_{0}^{+}}
\global\long\def\coforgetful#1{\mathbf{co}-\forgetful{#1}}
\global\long\def\cochaincat{\mathbf{co}\chaincat}
\global\long\def\zinfty#1{\integers_{\infty}#1}
\global\long\def\pgam{\tilde{\Gamma}}
\global\long\def\pz{\tilde{\ring}}

\global\long\def\sab{\mathbf{sAB}}

\global\long\def\sabz{\sab_{0}}

\global\long\def\homzp{\mathrm{Hom}_{\integers[C_{p}]}}
\global\long\def\finitecofree{\mathcal{F}_{\S}^{c}}
\global\long\def\rzp{\mathrm{R}(\integers/p\cdot\integers)}
\global\long\def\vp{\mathcal{V}_{p}}
\global\long\def\trivialcofree{\bar{\mathcal{F}_{\S}}}
\global\long\def\slength#1{|#1|}
\global\long\def\barcs{\bar{\mathcal{B}}}

\global\long\def\exp{\operatorname{exp}}

\global\long\def\ints{\mathbb{Z}}

\global\long\def\rats{\mathbb{Q}}

\global\long\def\qhat{\hat{Q}}

\global\long\def\moore#1{\{#1\}}

\global\long\def\s{\mathfrak{S}}

\global\long\def\tmap#1{\mathrm{T}_{#1}}

\global\long\def\Tmap#1{\mathfrak{T}_{#1}}

\global\long\def\glist#1#2#3{#1_{#2},\dots,#1_{#3}}

\global\long\def\blist#1#2{\glist{#1}1{#2}}

\global\long\def\enlist#1#2{\{\blist{#1}{#2}\}}

\global\long\def\tlist#1#2{\tmap{\blist{#1}{#2}}}

\global\long\def\Tlist#1#2{\Tmap{\blist{#1}{#2}}}

\global\long\def\fface{\mathrm{F}_{0}}

\global\long\def\lface{\tilde{\mathrm{F}}}

\global\long\def\nth#1{\mbox{#1}^{\mathrm{th}}}

\global\long\def\tunder#1#2{\tmap{\underbrace{{\scriptstyle #1}}_{#2}}}

\global\long\def\Tunder#1#2{\Tmap{\underbrace{{\scriptstyle #1}}_{#2}}}

\global\long\def\tunderi#1#2{\tunder{1,\dots,#1,\dots,1}{#2^{\mathrm{th}}\ \mathrm{position}}}

\global\long\def\Tunderi#1#2{\Tunder{1,\dots,#1,\dots,1}{#2^{\mathrm{th}}\,\mathrm{position}}}

\global\long\def\img{\operatorname{im}}

\global\long\def\ss{\mathbf{S}}

\global\long\def\ssz{\ss_{0}}

\global\long\def\ns#1{\mathcal{N}^{#1}}

\global\long\def\cfn#1{\mathcal{N}(#1)}

\global\long\def\m#1{\mathscr{M}_{#1}}

\global\long\def\freeop{\mathcal{F}}

\global\long\def\kerftos{\mathscr{K}}

\global\long\def\comm{\mathcal{C}\mathrm{omm}}

\global\long\def\steen{\mathscr{S}}

\global\long\def\arity{\operatorname{arity}\,}

\global\long\def\fc#1{\mathrm{hom}_{\steen}(\bigstar,#1)}

\global\long\def\dcat{\mathbf{D}}

\global\long\def\ords{\mathbf{\Delta}_{+}}

\global\long\def\sd{\mathfrak{f}}

\global\long\def\ds{\mathfrak{d}}

\global\long\def\homz{\mathrm{Hom}_{\ints}}

\begin{abstract}
This paper shows that a functorial version of the ``higher diagonal''
of a space used to compute \emph{Steenrod squares} actually contains
far more topological information --- including (in some cases) the
space's integral homotopy type.
\end{abstract}

\end{frontmatter}{}

\def\appendixname{}

\section{Introduction}

It is well-known that the Alexander-Whitney coproduct is functorial
with respect to simplicial maps. If $X$ is a simplicial set, $C(X)$
is the unnormalized chain-complex and $\rs 2$ is the \emph{bar-resolution}
of $\ints_{2}$ (see \cite{maclane:1975}), it is also well-known
that there is a unique homotopy class of $\ints_{2}$-equivariant
maps (where $\ints_{2}$ transposes the factors of the target) 
\[
\xi_{X}:\rs 2\otimes C(X)\to C(X)\otimes C(X)
\]
and that this extends the Alexander-Whitney diagonal. We will call
such structures, Steenrod coalgebras and the map $\xi_{X}$ the Steenrod
diagonal. In his construction of cup-$i$ products, Steenrod defined
a kind of dual of this map in \cite{steenrod-cup-i}.

With some care (see appendix~\ref{sec:Functorial-Steenrod-coalgebras}),
one can construct $\xi_{X}$ in a manner that makes it \emph{functorial}
with respect to simplicial maps although this is seldom done since
the \emph{homotopy class} of this map is what is generally studied.
Essentially, \cite{smith-cellular,smith-s-fundamental,smith:1994}
show that $C(X)$ possesses the structure of a functorial coalgebra
over an operad $\s$ (see example~\ref{exa:The-Barratt-Eccles-operad,})
and that the arity-2 portion of this operad-action is a functorial
version of $\xi_{X}$. Throughout this paper, we will assume this
\emph{functorial} version of $\xi_{X}$.

It is natural to ask whether $\xi_{X}$ encapsulates more information
about a topological space than its cup-product and Steenrod squares.
The present paper answers this question affirmatively for \emph{degeneracy-free}
simplicial sets. Roughly speaking, these are simplicial sets whose
degeneracies do not satisfy \emph{any relations} other than the minimal
set of identities all face- and degeneracy-operators must satisfy
--- see definition~\ref{def:degeneracy-free} and proposition~\ref{prop:intrinsic-degenracy-free}.
Every simplicial set is \emph{canonically} homotopy equivalent to
a degeneracy-free one (see proposition~\ref{prop:homotopy-equiv-sd-ds}).
The only place degeneracy-freeness is used in this paper is lemma~\ref{lem:degeneracy-free-inclusion}.
\begin{thm*}
\ref{cor:main-hurewicz} Let $X$ and $Y$ be pointed, reduced degeneracy-free
(see definition~\ref{def:degeneracy-free} in appendix~\ref{sec:Simplicial-sets-and})
simplicial sets with normalized chain-complexes $N(X)$ and $N(Y)$,
let $\ring=\ints_{p}$ for some prime $p$ or a subring of $\rats$,
and let 
\[
f:N(X)\otimes\ring\to N(Y)\otimes\ring
\]
 be a (purely algebraic) chain map that makes the diagram 
\begin{equation}
\xymatrix{{\rs 2\otimes N(X)\otimes\ring}\ar[r]^{1\otimes f}\ar[d]_{\xi_{X}\otimes1} & {\rs 2\otimes N(Y)\otimes\ring}\ar[d]^{\xi_{Y}\otimes1}\\
{N(X)\otimes\ring\otimes N(X)\otimes\ring}\ar[r]_{f\otimes f} & {N(Y)\otimes\ring\otimes N(Y)\otimes\ring}
}
\label{eq:coproduct-diagram}
\end{equation}
commute exactly (i.e., not merely up to a chain-homotopy). Then $f$
induces a simplicial map
\[
f_{\infty}:\ring_{\infty}X\to\ring_{\infty}Y
\]
where $\ring_{\infty}$ denotes the $\ring$-completion, (see \cite{bousfield-kan}
or \cite{goerss-jardine} for this concept) that makes the diagram\textup{\emph{
\[
\xymatrix{{X}\ar[d]_{\phi_{X}} & {Y}\ar[d]^{\phi_{Y}}\\
{\ring_{\infty}X}\ar[r]^{f_{\infty}}\ar[d]_{q_{X}} & {\ring{}_{\infty}Y}\ar[d]^{q_{Y}}\\
{\pz X}\ar[r]_{\Gamma f} & {\pz Y}
}
\]
commute.}} If $f$ is surjective, then $f_{\infty}$ is a fibration,
and if $f$ is also a homology equivalence, then $f_{\infty}$ is
a trivial fibration. \end{thm*}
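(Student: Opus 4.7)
The plan is to invoke a functorial reconstruction of $\ring$-completions from Steenrod coalgebras: to each Steenrod coalgebra $(C,\xi)$ over $\ring$ one associates a simplicial set $\mathcal{R}(C)$ such that, when $C=N(X)\otimes\ring$ with $\xi=\xi_X$, there is a natural isomorphism $\mathcal{R}(N(X)\otimes\ring)\cong\ring_\infty X$. Under this correspondence, a chain map $f$ that strictly satisfies (\ref{eq:coproduct-diagram}) is precisely a morphism of Steenrod coalgebras, and $f_\infty := \mathcal{R}(f)$ is the desired induced simplicial map. The unit $\phi_X$ and the projection $q_X$ are the canonical transformations coming from the construction, and commutativity of both squares reduces to naturality of $\mathcal{R}$ (top square) and of $\Gamma$ applied to the underlying chain map (bottom square).

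To build $\mathcal{R}$ I would follow a Bousfield--Kan--style total-space construction: iterate a cotriple on the category of Steenrod coalgebras to produce a cosimplicial Steenrod coalgebra, apply a simplicial-set realization functor levelwise, and pass to $\mathrm{Tot}$. What makes arity-$2$ strictness the right hypothesis is that the cotriple and the realization are expressed entirely in terms of the coproduct $\xi$, and not in terms of higher operadic data. The role of degeneracy-freeness is via lemma~\ref{lem:degeneracy-free-inclusion}: it ensures that the normalized chains $N(X)\otimes\ring$ carry the full Steenrod-coalgebra information of $X$, so that a strict morphism between the normalized objects is genuinely a strict morphism of the underlying Steenrod coalgebras, and so that the reconstructed simplicial set recovers $\ring_\infty X$ rather than a proper quotient.

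For the fibration claims I would argue layer-by-layer. Surjectivity of $f$ makes each level of the induced map of cosimplicial realizations a surjection of simplicial $\ring$-modules, which is automatically a Kan fibration; a tower of fibrations yields a fibration on passage to $\mathrm{Tot}$. If $f$ is in addition a homology equivalence, each level becomes a weak equivalence of simplicial $\ring$-modules (via Dold--Kan), so the tower consists of trivial fibrations and $\mathrm{Tot}$ of such is again a trivial fibration. The principal obstacle is the first step: showing rigorously that strict arity-$2$ compatibility alone suffices to produce a morphism of the cosimplicial resolutions. This is in sharp contrast to the usual setting, where one expects to need a morphism of full $E_\infty$-coalgebras up to coherent homotopy, and it is here that the \emph{functorial} choice of $\xi_X$ from appendix~\ref{sec:Functorial-Steenrod-coalgebras}, together with degeneracy-freeness, must be used to trivialize the obstructions that would otherwise appear at higher arities.
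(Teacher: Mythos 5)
There is a genuine gap, and it sits exactly where you flag it: you never explain \emph{why} strict arity-2 compatibility produces a morphism of cosimplicial resolutions, and the mechanism you gesture at is not the one that works. The paper does not build $\ring_{\infty}$ as a functor $\mathcal{R}$ on Steenrod coalgebras by iterating a cotriple on the coalgebra category; no such construction is established (or needed). Instead it uses the standard Bousfield--Kan resolution $\ring^{\bullet}X$ with levels $\pz^{n+1}X$ and makes a precise bookkeeping observation: by proposition~\ref{prop:chain-maps-induce-zs} (Dold--Kan), the bare chain map $f$ already induces maps $\pz^{i-1}\pgam f:\pz^{i}X\to\pz^{i}Y$ on every level, and \emph{all} codegeneracies and all cofaces $\delta^{i}$ with $i<n+1$ are purely algebraic (ring multiplication and Hurewicz maps of simplicial abelian groups), so they commute with these maps for free. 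The single coface that sees the topology of $X$ is the last one, built from the Hurewicz map $h_{X}:X\to\pz X$, and the entire content of the hypothesis that $f$ respects $\xi$ is used to show that this one coface is preserved. That step is proposition~\ref{prop:hurewicz-commutes}, which rests on the Injectivity Theorem~\ref{thm:injectivity-theorem}: the classifying map $F_{X}:C(\pz X)\otimes\ring\to L_{\freeop}(C(X)\otimes\ring)$ is injective, so $N(h_{X})\otimes1=F_{X}^{-1}\circ\alpha_{X}$ is \emph{determined} by the Steenrod diagonal. Injectivity in turn is proved by the chain-level computation $\xi(e_{k}\otimes\sigma)=\pm\,\sigma\otimes\sigma$ for every $k$-simplex $\sigma$ (proposition~\ref{pro:simplicespropertyS}), which ``recognizes'' simplices inside the cofree coalgebra, followed by the Vandermonde linear-independence argument of lemma~\ref{lem:diagonals-linearly-independent}. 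None of this appears in your proposal; saying that functoriality of $\xi_{X}$ and degeneracy-freeness ``trivialize the obstructions at higher arities'' is not an argument, and without the injectivity mechanism there is no reason a map respecting only the arity-2 structure should be compatible with the Hurewicz coface. (Degeneracy-freeness, via lemma~\ref{lem:degeneracy-free-inclusion}, is used only to make $N(X)$ a sub-Steenrod-coalgebra of $C(X)$ so that $\alpha_{X}$ exists; it is not a device for killing higher-arity obstructions.)

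Your treatment of the fibration statements is also looser than what is required: a levelwise surjection/fibration of cosimplicial spaces does not by itself give a fibration on $\mathrm{Tot}$; one needs the matching-map (Reedy-type) condition, which the paper obtains from the fact that $\ring^{\bullet}X$ is group-like (all cofaces except the $0^{\text{th}}$ are homomorphisms of simplicial abelian groups), citing Bousfield--Kan X.4.9, and for the trivial-fibration case VIII.4.13 together with cofibrancy of $\Delta^{\bullet}$. This part could be repaired along the lines you sketch, but the missing Hurewicz/injectivity step is the essential content of the theorem and your proposal does not supply it.
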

\begin{cor*}
If $f$ is a surjective homology equivalence, $\ring=\ints$, and
$X$ and $Y$ are nilpotent then there exists a homotopy equivalence
\[
\bar{f}:|X|\to|Y|
\]
of topological realizations. Corollary~\ref{cor:final-result} states
that nilpotent, degeneracy-free spaces are homotopy equivalent if
and only if there exists a homology equivalence of their chain-complexes
that make diagram~\ref{eq:coproduct-diagram} commute.
\end{cor*}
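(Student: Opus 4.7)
The plan is to combine the main theorem (already proved in the preceding sections) with the Bousfield--Kan characterization of nilpotent simplicial sets through $\ints$-completion. Applying the theorem with $\ring=\ints$ to the given surjective homology equivalence $f$ yields a trivial fibration $f_\infty\colon\ints_\infty X\to\ints_\infty Y$ fitting into the commutative square with the completion maps $\phi_X$ and $\phi_Y$.

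Next I would invoke the classical result (see \cite{bousfield-kan}) that for a nilpotent simplicial set $Z$ the completion map $\phi_Z\colon Z\to\ints_\infty Z$ is a weak equivalence. Under the nilpotency hypothesis on both $X$ and $Y$, the maps $\phi_X$ and $\phi_Y$ are therefore weak equivalences, and this produces a zig-zag
\[
X\xrightarrow{\phi_X}\ints_\infty X\xrightarrow{f_\infty}\ints_\infty Y\xleftarrow{\phi_Y}Y
\]
in which every arrow is a weak equivalence of simplicial sets. Consequently $X$ and $Y$ are isomorphic in the simplicial homotopy category.

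Finally, geometric realization carries simplicial weak equivalences to weak equivalences of CW complexes, so applying it to the above zig-zag turns every arrow into a weak equivalence of CW complexes; invoking Whitehead's theorem on each of the arrows then yields a homotopy inverse for $|\phi_Y|$, and composing $|\phi_Y|^{-1}\circ|f_\infty|\circ|\phi_X|$ (with inverse taken up to homotopy) gives the desired $\bar f\colon|X|\to|Y|$.

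The substantive work has already been absorbed into the main theorem, which delivers $f_\infty$ as an honest trivial fibration rather than a map defined only up to homotopy; the remaining passage from nilpotent simplicial sets to their realizations is routine. The only point that demands genuine care is verifying the nilpotency hypothesis is applied correctly --- in particular, that the Bousfield--Kan result can be invoked for both $X$ and $Y$ --- and in turning the abstract zig-zag equivalence into a single map $\bar f$ in the correct direction, which is the one place the CW structure on $|X|$ and $|Y|$ is genuinely used.
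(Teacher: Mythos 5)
Your argument is correct and follows essentially the same route as the paper's proof of Corollary~\ref{cor:final-result}: the diagram from the main theorem, the Bousfield--Kan result that $\phi_X$ and $\phi_Y$ are weak equivalences for nilpotent spaces, and Whitehead's theorem applied after realization to invert $\phi_Y$ and form $\phi_Y^{-1}\circ f_\infty\circ\phi_X$. The only cosmetic difference is that you obtain the weak equivalence of $f_\infty$ directly from the trivial-fibration clause of the theorem (legitimate here, since surjectivity is assumed), whereas the paper argues it by comparing homology via $\pi_n(\pz X)\cong H_n(X)$.
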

Here, $\pz*$ is a pointed version of the $\ring$-free simplicial
abelian group functor --- see definitions~\ref{def:hurewicz-map}
and \ref{def:pz}.

Because of the canonical homotopy equivalence between all simplicial
sets and degeneracy-free ones, the result above implies:
\begin{cor*}
\ref{cor:statement-for-simplicial-sets} If $X$ and $Y$ are pointed
reduced simplicial sets and 
\[
f:C(X)\to C(Y)
\]
is a morphism of Steenrod coalgebras --- over unnormalized chain-complexes
--- then $f$ induces a commutative diagram
\[
\xymatrix{{X} & {Y}\\
\ds\circ\sd(X)\ar[d]_{\phi_{(\ds\circ\sd(X))}}\ar[u]^{g_{X}} & \ds\circ\sd(Y)\ar[d]^{\phi_{(\ds\circ\sd(Y))}}\ar[u]_{g_{Y}}\\
{\ring_{\infty}(\ds\circ\sd(X))}\ar[r]^{f_{\infty}}\ar[d]_{q_{(\ds\circ\sd(X))}} & {\ring_{\infty}(\ds\circ\sd(Y))}\ar[d]^{q_{(\ds\circ\sd(Y))}}\\
{\pz(\ds\circ\sd(X))}\ar[r]_{\pgam f} & {\pz(\ds\circ\sd(Y))}
}
\]
 where $g_{X}$ and $g_{Y}$ are homotopy equivalences if $X$ and
$Y$ are Kan complexes --- and homotopy equivalences of their topological
realizations otherwise. In particular, if $X$ and $Y$ are nilpotent,
$\ring=\ints$, and $f$ is an integral homology equivalence, then
the topological realizations $|X|$ and $|Y|$ are homotopy equivalent.
\end{cor*}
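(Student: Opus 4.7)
The plan is to reduce the statement to the degeneracy-free setting already handled by Theorem~\ref{cor:main-hurewicz}, using the canonical resolution functor $\ds \circ \sd$ supplied by proposition~\ref{prop:homotopy-equiv-sd-ds}. That proposition guarantees, for every pointed reduced simplicial set $X$, a degeneracy-free model $\ds \circ \sd(X)$ together with a natural comparison map $g_X\colon \ds\circ\sd(X) \to X$ that is a homotopy equivalence when $X$ is Kan and induces a homotopy equivalence on topological realizations in general. Applied to both $X$ and $Y$, this immediately supplies the top two rows of the target diagram.

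Next, I would promote $f$ to a Steenrod coalgebra morphism $\tilde f \colon C(\ds\circ\sd(X)) \to C(\ds\circ\sd(Y))$ between the degeneracy-free chain complexes. Since $\sd$ and $\ds$ are functors on pointed simplicial sets (appendix~\ref{sec:Simplicial-sets-and}) and the Steenrod diagonal $\xi_{(-)}$ is strictly natural in simplicial maps by construction, applying $C\circ\ds\circ\sd$ to the hypothesized Steenrod coalgebra map $f$ transports it to a Steenrod coalgebra map $\tilde f$ which fits into the obvious square with $C(g_X)$, $C(g_Y)$, and $f$. Theorem~\ref{cor:main-hurewicz} then applies verbatim to the degeneracy-free pair together with $\tilde f$, yielding the simplicial map $f_\infty\colon \ring_\infty(\ds\circ\sd X) \to \ring_\infty(\ds\circ\sd Y)$ and the lower two commutative rectangles, with the bottom arrow recorded here as $\pgam f$.

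The main obstacle is the requirement that the coproduct square in diagram~\ref{eq:coproduct-diagram} commute \emph{exactly}, not merely up to chain homotopy, after the passage to $\ds\circ\sd$. This is where the functorial version of $\xi_{(-)}$ from \cite{smith-cellular,smith-s-fundamental,smith:1994} — rather than its homotopy class — becomes essential: because $\xi_{(-)}$ is the arity-$2$ component of a genuinely natural operad action, strict naturality in simplicial maps propagates through any functorial construction such as $\ds\circ\sd$, and so strict compatibility of $f$ with $\xi_X$ and $\xi_Y$ automatically transports to strict compatibility of $\tilde f$ with $\xi_{\ds\circ\sd X}$ and $\xi_{\ds\circ\sd Y}$. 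With this strict compatibility in hand, the concluding sentence about nilpotent integrally homology-equivalent $X$ and $Y$ follows at once: Theorem~\ref{cor:main-hurewicz} produces a homotopy equivalence $|\ds\circ\sd X|\simeq|\ds\circ\sd Y|$, and composing with the realizations $|g_X|$ and $|g_Y|$ — both homotopy equivalences — upgrades this to $|X|\simeq|Y|$.
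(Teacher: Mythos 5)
Your use of proposition~\ref{prop:homotopy-equiv-sd-ds} to supply $g_{X}$, $g_{Y}$ and the reduction to Theorem~\ref{cor:main-hurewicz} is the right overall strategy, but the central step of your argument does not make sense as written, and it papers over the one fact that actually makes the reduction work. You propose to ``promote'' $f$ to a map $\tilde f:C(\ds\circ\sd(X))\to C(\ds\circ\sd(Y))$ by ``applying $C\circ\ds\circ\sd$ to the hypothesized Steenrod coalgebra map $f$.'' But $f$ is a purely algebraic chain map $C(X)\to C(Y)$, not a simplicial map: the functors $\sd$ and $\ds$ are defined on simplicial sets and delta-complexes, and there is nothing to apply them to. Likewise, the strict naturality of $\xi_{(-)}$ is naturality with respect to \emph{simplicial} maps, so it cannot be invoked to ``transport'' strict compatibility along a construction that was never applied to $f$ in the first place. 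A further mismatch: Theorem~\ref{cor:main-hurewicz} takes as input a morphism of \emph{normalized} chain complexes $N(\ds\circ\sd(X))\otimes\ring\to N(\ds\circ\sd(Y))\otimes\ring$, whereas your $\tilde f$ is aimed at the unnormalized complexes $C(\ds\circ\sd(X))$, which are strictly larger (they contain all the formal degeneracies added by $\ds$).

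The missing ingredient is proposition~\ref{prop:cx-is-nfx}: for any simplicial set $X$ one has $N(\ds\circ\sd(X))=C(X)$, since the nondegenerate simplices of $\ds\circ\sd(X)$ are exactly the simplices of $X$ promoted to nondegenerate status, and this identification is compatible with the functorial Steenrod structures. Consequently no transport is needed at all: the given $f:C(X)\to C(Y)$ \emph{is already} a morphism of Steenrod coalgebras
\[
N(\ds\circ\sd(X))\otimes\ring\longrightarrow N(\ds\circ\sd(Y))\otimes\ring ,
\]
so Theorem~\ref{cor:main-hurewicz} applies verbatim to the degeneracy-free models $\ds\circ\sd(X)$ and $\ds\circ\sd(Y)$ with $f$ itself, producing $f_{\infty}$ and the lower rectangles; the top of the diagram and the final statement about $|X|\simeq|Y|$ then follow from proposition~\ref{prop:homotopy-equiv-sd-ds} exactly as you say. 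Your concluding composition with $|g_{X}|$ and $|g_{Y}|$ is fine; it is the construction of $\tilde f$ that must be replaced by the identification $N(\ds\circ\sd(X))=C(X)$.
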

Here, $\sd$ and $\ds$ are functors defined in definition~\ref{def:sd-ds-functors}
in appendix~\ref{sec:Simplicial-sets-and}. Singular simplicial sets
are always Kan complexes.

The reader might wonder how the Steenrod diagonal can contain any
information beyond the structure of a space at the prime 2. The answer
is that it forms part of an operad structure that contains information
about all primes --- and the only part of this complex operad structure
needed to compute, for instance, Steenrod $p^{\text{th}}$ powers
is the Steenrod diagonal. 

For example, let $X$ be a simplicial set with functorial higher diagonal
\[
h:\rs 2\otimes C(X)\to C(X)\otimes C(X)
\]
let $\Delta=h([\,]\otimes*):C(X)\to C(X)\otimes C(X)$ --- the Alexander-Whitney
diagonal --- and let $\Delta_{2}=h([(1,2]\otimes*):C(X)\to C(X)\otimes C(X)$.
A straightforward calculation shows that 
\[
(1\otimes\Delta)\circ\Delta_{2}:C(X)\to C(X)^{\otimes3}
\]
has the property that 
\begin{eqnarray}
\partial\{(1\otimes\Delta)\circ\Delta_{2}\} & = & (1\otimes\Delta)\circ\partial\Delta_{2}\nonumber \\
 & = & (1\otimes\Delta)\circ\{(1,2)-1\}\Delta\nonumber \\
 & = & (1,2,3)(\Delta\otimes1)\circ\Delta-(1\otimes\Delta)\circ\Delta\nonumber \\
 & = & \{(1,2,3)-1\}(1\otimes\Delta)\circ\Delta\label{eq:prime3}
\end{eqnarray}
where $(1,2,3)$ is a cyclic permutation of the factors. It follows
that $\Delta$ and $\Delta_{2}$ incorporate information about $X$
at the prime 3. Although the argument in equation~\ref{eq:prime3}
is elementary, the author is unaware of any prior instance of it. 

This paper's general approach to homotopy theory is the end result
of a lengthy research program involving some of the 20th century's
leading mathematicians. In \cite{quillen:1969}, Daniel Quillen proved
that the category of simply-connected rational simplicial sets is
equivalent to that of commutative coalgebras over $\rats$. In \cite{sullivan.mm:1977},
Sullivan analyzed the algebraic and analytic properties of these coalgebras,
developing the concept of minimal models and relating them to de~Rham
cohomology. That work was dual to Quillen's and had the advantage
of being far more direct.

Since then, a major goal has been to develop a similar theory for
\emph{integral} homotopy types.

In \cite{smirnov:1985}, Smirnov asserted that the integral homotopy
type of a space is determined by a coalgebra-structure on its singular
chain-complex over an $E_{\infty}$-operad. Smirnov's proof was somewhat
opaque and several people known to the author even doubted the result's
validity. In any case, the $E_{\infty}$-operad involved was complex,
being uncountably generated in all dimensions.

In \cite{smith:1994}, the author showed that the chain-complex of
a space was naturally a coalgebra over an $E_{\infty}$-operad $\s$
and that this could be used to iterate the cobar construction. The
paper \cite{smith:1998-2} applied those results to show that this
$\s$-coalgebra determined the integral homotopy type of a simply-connected
space.

In \cite{mandell-einfty}\footnote{Based on Mandell's 1997 thesis.},
Mandell showed that the mod-$p$ cochain complex of a $p$-nilpotent
space had a algebra structure over an operad that determined the space's
$p$-type. In \cite{mandell-finite}, Mandell showed that the cochains
of a nilpotent space whose homotopy groups are all \emph{finite} have
an algebra structure over an operad that determined its integral homotopy
type.

The paper \cite{smith-cellular} showed that the $\s$-coalgebra structure
of a chain-complex had a ``transcendental'' structure that determines
a nilpotent space's homotopy type (without the finiteness conditions
of \cite{mandell-finite}). It essentially reprised the main result
of \cite{smith:1998-2}, using a very different proof-method. The
present paper shows that this transcendental structure even manifests
in the sub-operad of $\s$ generated by its arity-2 component, $\rs 2$. 

I am indebted to Dennis Sullivan for several interesting discussions.

\section{Definitions\label{sec:Definitions}}

Given a simplicial set, $X$, $C(X)$ will always denote its \emph{unnormalized}
chain-complex and $N(X)$ its \emph{normalized} one (with degeneracies
divided out).
\begin{rem}
\label{assu:rdef}Throughout this paper $\ring$ denotes a fixed ring
satisfying
\[
\ring=\begin{cases}
\ints_{p} & \text{for some prime }p\text{ or}\\
\ring\subset\rats
\end{cases}
\]
\end{rem}
\begin{defn}
\label{def:chaincat} We will denote the category of $\ring$-free
chain chain-complexes by $\chaincat$ and ones that are\emph{ bounded
from below} in dimension $0$ by $\bchaincat$.
\end{defn}
We make extensive use of the Koszul Convention (see~\cite{gugenheim:1960})
regarding signs in homological calculations:
\begin{defn}
\label{def:koszul} If $f:C_{1}\to D_{1}$, $g:C_{2}\to D_{2}$ are
maps, and $a\otimes b\in C_{1}\otimes C_{2}$ (where $a$ is a homogeneous
element), then $(f\otimes g)(a\otimes b)$ is defined to be $(-1)^{\deg(g)\cdot\deg(a)}f(a)\otimes g(b)$. \end{defn}
\begin{rem}
If $f_{i}$, $g_{i}$ are maps, it isn't hard to verify that the Koszul
convention implies that $(f_{1}\otimes g_{1})\circ(f_{2}\otimes g_{2})=(-1)^{\deg(f_{2})\cdot\deg(g_{1})}(f_{1}\circ f_{2}\otimes g_{1}\circ g_{2})$.
\end{rem}
The set of morphisms of chain-complexes is itself a chain complex:
\begin{defn}
\label{def:homcomplex}Given chain-complexes $A,B\in\chaincat$ define
\[
\homz(A,B)
\]
to be the chain-complex of graded $\ring$-morphisms where the degree
of an element $x\in\homz(A,B)$ is its degree as a map and with differential
\[
\partial f=f\circ\partial_{A}-(-1)^{\deg f}\partial_{B}\circ f
\]
As a $\ring$-module $\homz(A,B)_{k}=\prod_{j}\homz(A_{j},B_{j+k})$.\end{defn}
\begin{rem*}
Given $A,B\in\chaincat^{S_{n}}$, we can define $\homzs n(A,B)$ in
a corresponding way.
\end{rem*}
Recall the concept of \emph{algebraic operad} in \cite{operad-loday}
or \cite{kriz-may}: a sequence of $\zs n$ chain-complexes $\{\mathcal{V}(n)\}$
for $n\ge0$ with structure maps
\[
\gamma_{i_{1},\dots,i_{n}}:\mathcal{V}(n)\otimes\mathcal{V}(i_{1})\otimes\cdots\otimes\mathcal{V}(i_{n})\to\mathcal{V}(i_{1}+\cdots i_{n})
\]
for $n,i_{1},\dots,i_{n}\ge0$.
\begin{defn}
We will call the operad $\mathcal{V}=\{\mathcal{V}(n)\}$ $\Sigma$-\emph{cofibrant}
if $\mathcal{V}(n)$ is $\zs n$-projective for all $n\ge0$.\end{defn}
\begin{rem*}
The operads we consider here correspond to \emph{symmetric} operads
in \cite{smith:cofree}.

The term ``unital operad'' is used in different ways by different
authors. We use it in the sense of Kriz and May in \cite{kriz-may},
meaning the operad has a $0$-component that acts like an arity-lowering
augmentation under compositions. Here $\mathcal{V}(0)=\ring$.

The term $\Sigma$-\emph{cofibrant} first appeared in \cite{berger-moerdijk-axiom-operad}.
\end{rem*}
We also need to recall compositions in operads:
\begin{defn}
\label{def:operad-comps}If $\mathcal{V}$ is an operad with components
$\mathcal{V}(n)$ and $\mathcal{V}(m)$, define the $i^{\text{th}}$
\emph{composition,} with $1\le i\le n$
\[
\circ_{i}:\mathcal{V}(n)\otimes\mathcal{V}(m)\to\mathcal{V}(n+m-1)
\]
by
\[
\xymatrix{{\mathcal{V}(n)\otimes\mathcal{V}(m)}\ar@{=}[d]\\
{\mathcal{V}(n)\otimes\ints^{i-1}\otimes\mathcal{V}(m)\otimes\ints^{n-i}}\ar[d]^{1\otimes\eta^{i-1}\otimes1\otimes\eta^{n-i}}\\
{\mathcal{V}(n)\otimes{\mathcal{V}(1)}^{i-1}\otimes\mathcal{V}(m)\otimes{\mathcal{V}(1)}^{n-i}}\ar[d]^{\gamma}\\
{\mathcal{V}(n+m-1)}
}
\]
Here $\eta:\ints\to\mathcal{V}(1)$ is the unit.\end{defn}
\begin{rem*}
Operads were originally called \emph{composition algebras} and defined
in terms of these operations --- see \cite{gerstenhaber:1962}. 
\end{rem*}
It is well-known that the compositions and the operad structure-maps
determine each other --- see definition~2.12 and proposition~2.13
of \cite{smith:cofree}.

A simple example of an operad is:
\begin{example}
\label{example:frakS0}For each $n\ge0$, $\s_{0}(n)=\zs n$, with
structure-map a $\ints$-linear extension of
\[
\gamma_{\alpha_{1},\dots,\alpha_{n}}:S_{n}\times S_{\alpha_{1}}\times\cdots\times S_{\alpha_{n}}\to S_{\alpha_{1}+\cdots+\alpha_{n}}
\]
defined by
\[
\gamma_{\alpha_{1},\dots,\alpha_{n}}(\sigma\times\theta_{1}\times\cdots\times\theta_{n})=\tlist{\alpha}n(\sigma)\circ\left(\theta_{1}\oplus\cdots\oplus\theta_{n}\right)
\]
with $\sigma\in S_{n}$ and $\theta_{i}\in S_{\alpha_{i}}$ where
$\tlist{\alpha}n(\sigma)\in S_{\sum\alpha_{i}}$ is a permutation
that permutes the $n$ blocks
\begin{multline*}
\{1,\dots,\alpha_{1}\},\{\alpha_{1}+1,\alpha_{1}+\alpha_{2}\},\dots,\\
\{\alpha_{1}+\cdots+\alpha_{n-1}+1,\alpha_{1}+\cdots+\alpha_{n}\}
\end{multline*}
 via $\sigma$. See \cite{smith:1994} for explicit formulas and computations.
\end{example}
Another important operad is:
\begin{example}
\label{exa:The-Barratt-Eccles-operad,}The operad, $\mathfrak{S}$,
defined in \cite{smith:1994} is given by $\mathfrak{S}(n)=\rs n$
--- the normalized\emph{ bar-resolution} of $\ints$ over $\zs n$.
This is well-known (like the closely-related Barrett-Eccles operad
defined in \cite{barratt-eccles-operad}) to be a Hopf-operad, i.e.
equipped with an operad morphism
\[
\delta:\mathfrak{S}\to\mathfrak{S}\otimes\mathfrak{S}
\]
and is important in topological applications. See \cite{smith:1994}
for formulas for the structure maps.
\end{example}
For the purposes of this paper, the main example of an operad is
\begin{defn}
\label{def:coend}Given any $C\in\chaincat$, the associated \emph{coendomorphism
operad}, $\coend(C)$ is defined by
\[
\coend(C)(n)=\homz(C,C^{\otimes n})
\]
 Its structure map
\begin{multline*}
\gamma_{\alpha_{1},\dots,\alpha_{n}}:\homz(C,C^{\otimes n})\otimes\homz(C,C^{\otimes\alpha_{1}})\otimes\cdots\otimes\homz(C,C^{\otimes\alpha_{n}})\to\\
\homz(C,C^{\otimes\alpha_{1}+\cdots+\alpha_{n}})
\end{multline*}
simply composes a map in $\homz(C,C^{\otimes n})$ with maps of each
of the $n$ factors of $C$. 

This is a non-unital operad, but if $C\in\chaincat$ has an augmentation
map $\varepsilon:C\to\ring$ then we can  regard $\varepsilon$ as
the generator of $\coend(C)(0)=\ring\cdot\varepsilon\subset\homz(C,C^{\otimes0})=\homz(C,\ring)$.
\end{defn}
We use the coendomorphism operad to define the main object of this
paper:
\begin{defn}
\label{def:coalg}A \emph{coalgebra over an operad} $\mathcal{V}$
is a chain-complex $C\in\chaincat$ with an operad morphism $\alpha:\mathcal{V}\to\coend(C)$,
called its \emph{structure map.} We will sometimes want to define
coalgebras using the \emph{adjoint structure map, }
\begin{equation}
\alpha:C\to\prod_{n\ge0}\homzs n(\mathcal{V}(n),C^{\otimes n})\label{eq:structure-map}
\end{equation}
where $S_{n}$ acts on $C^{\otimes n}$ by permuting factors or the
set of chain-maps
\[
\alpha_{n}:C\to\homzs n(\mathcal{V}(n),C^{\otimes n})
\]
for all $n\ge0$ or even 
\[
\beta_{n}:\mathcal{V}(n)\otimes C\to C^{\otimes n}
\]

\end{defn}
It is not hard to see how \emph{compositions} (in definition~\ref{def:operad-comps})
relate to coalgebras
\begin{prop}
\label{prop:composition-coalgebra}If the maps $\beta_{n}:\mathcal{V}(n)\otimes C\to C^{\otimes n}$
for all $n\ge0$ define a coalgebra over an operad $\mathcal{V}$,
for any $x\in\mathcal{V}(n)$ and any $n\ge0$ define 
\[
\Delta_{x}=\beta_{n}(x\otimes*):C\to C^{\otimes n}
\]
If $x\in\mathcal{V}(n)$ and $y\in\mathcal{V}(m)$, then
\[
\Delta_{y\circ_{i}x}=\underbrace{1\otimes\cdots\otimes1\otimes\Delta_{y}\otimes1\otimes\cdots\otimes}_{i^{\text{th}}\text{ position}}\circ\Delta_{x}
\]
\end{prop}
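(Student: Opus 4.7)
The plan is to reduce the proposition to unpacking the defining hypothesis that the coalgebra structure is an operad morphism $\alpha:\mathcal{V}\to\coend(C)$, under the correspondence $\alpha(x)=\Delta_{x}\in\homz(C,C^{\otimes n})=\coend(C)(n)$ for $x\in\mathcal{V}(n)$. The identity to be proved is then the assertion that $\alpha$ preserves partial compositions $\circ_{i}$, together with an explicit description of $\circ_{i}$ inside $\coend(C)$.

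First I would compute $\circ_{i}$ directly in the coendomorphism operad. Given $f\in\homz(C,C^{\otimes n})$ and $g\in\homz(C,C^{\otimes m})$, definition~\ref{def:operad-comps} instructs us to apply the full structure map $\gamma$ to $f$ tensored with $g$ in slot $i$ and with the image of the unit $\eta:\ints\to\mathcal{V}(1)$ in every other slot. Because the structure map of $\coend(C)$ is given by tensor-composition of maps into tensor powers of $C$, and because $\eta$ in $\coend(C)$ is the identity map $C\to C$, this unfolds to
\[
f\circ_{i}g\;=\;\bigl(\underbrace{1\otimes\cdots\otimes1\otimes g\otimes1\otimes\cdots\otimes1}_{i^{\text{th}}\text{ position}}\bigr)\circ f,
\]
which is precisely the right-hand side of the formula in the proposition, with $f=\Delta_{x}$ and $g=\Delta_{y}$.

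The second step is to invoke the operad morphism condition on $\alpha$. Since $\alpha$ is compatible with the full composition maps $\gamma$, it is in particular compatible with each $\circ_{i}$; applying $\alpha$ to the operadic composition yields $\Delta_{x\circ_{i}y}=\alpha(x)\circ_{i}\alpha(y)=\Delta_{x}\circ_{i}\Delta_{y}$, and substituting the explicit formula from the previous step gives the claim.

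The only thing that requires any care is bookkeeping: verifying that the symbols $1$ appearing in the identity tensor factors really do arise from the unit $\eta$ (immediate from the diagram in definition~\ref{def:operad-comps}) and that the Koszul sign convention from definition~\ref{def:koszul} introduces no extra signs. The latter holds because the identity has degree zero in each inactive slot, so every sign in the expansion of $(1\otimes\cdots\otimes\Delta_{y}\otimes\cdots\otimes 1)\circ\Delta_{x}$ trivializes. There is no genuine obstacle here; the entire content of the proposition is that the coalgebra structure map intertwines the operadic partial composition with the geometric operation of inserting one coproduct into a tensor slot of another.
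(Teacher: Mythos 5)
Your proposal is correct and is essentially the paper's own argument: the paper dismisses the proof as ``Immediate, from definitions~\ref{def:operad-comps} and \ref{def:coend},'' and your two steps --- computing that $\circ_{i}$ in $\coend(C)$ is insertion of $g$ into the $i^{\text{th}}$ tensor slot followed by composition (with the unit acting as the identity and no Koszul signs since the identities have degree zero), then invoking that the structure map is an operad morphism --- are exactly the unwinding that claim refers to. The only nit is your writing $\Delta_{x\circ_{i}y}$ where the statement has $\Delta_{y\circ_{i}x}$; this reflects an ambiguity in the paper's own ordering convention for $\circ_{i}$ rather than a gap in your argument, since the right-hand side you derive agrees with the proposition's.
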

\begin{proof}
Immediate, from definitions~\ref{def:operad-comps} and \ref{def:coend}.
\end{proof}

\subsection{Coalgebras over operads}
\begin{example}
Coassociative coalgebras are precisely the coalgebras over $\mathfrak{S}_{0}$
(see \ref{example:frakS0}). \end{example}
\begin{defn}
\label{def:coassoc}$\comm$ is an operad defined to have one basis
element $\{b_{i}\}$ for each integer $i\ge0$. Here the arity of
$b_{i}$ is $i$ and the degree is 0 and the these elements satisfy
the composition-law: $\gamma(b_{n}\otimes b_{k_{1}}\otimes\cdots\otimes b_{k_{n}})=b_{K}$,
where $K=\sum_{i=1}^{n}k_{i}$. The differential of this operad is
identically zero. The symmetric-group actions are trivial. \end{defn}
\begin{example}
Coassociative, commutative coalgebras are the coalgebras over $\comm$.
\end{example}
We can define a concept dual to that of a free algebra generated by
a set: 
\begin{defn}
\label{def:cofreecoalgebra}Let $D$ be a coalgebra over an operad
$\mathcal{V}$, equipped with a $\chaincat$-morphism $\varepsilon:\forgetful D\to E$,
where $E\in\chaincat$. Then $D$ is called \emph{the cofree coalgebra
over} $\mathcal{V}$ \emph{cogenerated} \emph{by} $\varepsilon$ if
any morphism in $\chaincat$
\[
f:\forgetful C\to E
\]
where $C\in\coalgcat$, induces a \emph{unique} morphism in $\coalgcat$
\[
\alpha_{f}:C\to D
\]
that makes the diagram 
\[
\xyC{40pt}\xymatrix{{\forgetful C}\ar[r]^{\forgetful{\alpha_{f}}}\ar[rd]_{f} & {\forgetful D}\ar[d]^{\varepsilon}\\
{} & {E}
}
\]
\emph{ }

Here $\alpha_{f}$ is called the \emph{classifying map} of $f$. If
$C\in\coalgcat$ then 
\[
\alpha_{f}:C\to L_{\mathfrak{\mathcal{V}}}\forgetful C
\]
 will be called the \emph{classifying map of} $C$.\end{defn}
\begin{rem}
\label{rem:cofree-property}This universal property of cofree coalgebras
implies that they are \emph{unique} up to isomorphism if they exist. 

The paper \cite{smith:cofree} explicitly constructs cofree coalgebras
for many operads:
\begin{itemize}
\item $L_{\mathcal{V}}C$ is the \emph{general} cofree coalgebra over the
operad $\mathcal{V}$ --- here, $C$, is a chain-complex that is not
necessarily concentrated in nonnegative dimensions. Then \cite{smith:cofree}
constructs $D=L_{\mathcal{V}}E$ as the maximal submodule of 
\[
\prod_{n=1}^{\infty}\homzs n(\mathcal{V}(n),E^{\otimes n})
\]
on which the dual of the structure-maps of $\mathcal{V}$ define a
coalgebra-structure: let $\iota:D\to\prod_{n=1}^{\infty}\homzs n(\mathcal{V}(n),E^{\otimes n})$
be the inclusion of chain-complexes. In the notation of definition~\ref{def:cofreecoalgebra},
an $\mathcal{V}$-coalgebra, $C$, is defined by its \emph{structure
map }(see equation~\ref{eq:structure-map})
\[
s:C\to\prod_{n=1}^{\infty}\homzs n(\mathcal{V}(n),C^{\otimes n})
\]
and its \emph{classifying map} $\alpha_{f}:D\to L_{\mathcal{V}}C$
is the coalgebra morphism defined by the diagram 
\begin{equation}
\xymatrix{{C}\ar[r]^{s\qquad\qquad\quad}\ar[d]_{\alpha_{f}} & {\prod_{n=1}^{\infty}\homzs n(\mathcal{V}(n),C^{\otimes n})}\ar[d]^{\prod_{n=1}^{\infty}\homzs n(1,f^{\otimes n})}\\
{D}\ar[r]_{\iota\qquad\qquad\quad} & {\prod_{n=1}^{\infty}\homzs n(\mathcal{V}(n),E^{\otimes n})}
}
\label{eq:classifying-map}
\end{equation}
An inductive argument shows that this is the \emph{unique} coalgebra
morphism compatible with $f$.
\end{itemize}
\end{rem}
In all cases, definition~\ref{def:cofreecoalgebra} implies the existence
of an adjunction
\begin{equation}
\forgetful *:\bchaincat\leftrightarrows:L_{\mathcal{V}}*\label{eq:cofree-adjunction}
\end{equation}
 where $\forgetful *:\steen\to\bchaincat$ is the \emph{forgetful
functor} from coalgebras to chain-complexes.

\section{Steenrod coalgebras\label{sec:Steenrod-coalgebras}}

We begin with:
\begin{defn}
\label{def:Steenrod-coalgebra}A \emph{Steenrod coalgebra,} $(C,\delta)$
is a chain-complex $C\in\chaincat$ equipped with a $\ints_{2}$-equivariant
chain-map
\[
\delta:\rs 2\otimes C\to C\otimes C
\]
 where $\ints_{2}$ acts on $C\otimes C$ by swapping factors and
$\rs 2$ is the bar-resolution of $\ints$ over $\zs 2$. A morphism
$f:(C,\delta_{C})\to(D,\delta_{D})$ is a chain-map $f:C\to D$ that
makes the diagram
\[
\xyR{30pt}\xymatrix{{C}\ar[r]^{f}\ar[d]_{\delta_{C}} & {D}\ar[d]^{\delta_{D}}\\
{C\otimes C}\ar[r]_{f\otimes f} & {D\otimes D}
}
\]
commute.
\end{defn}
Steenrod coalgebras are very general --- the underlying coalgebra
need not even be coassociative. The category of Steenrod coalgebras
is denoted $\steen$.
\begin{defn}
\label{def:f-operad}Let,\emph{ $\freeop$, }denote the \emph{free
operad} generated by\emph{ $\rs 2$.}\end{defn}
\begin{rem*}
See sections~5.2 and 5.5 of \cite{operad-loday} or section~5.8
of \cite{berger-moerdijk-axiom-operad} for an explicit construction
of $\freeop$. For instance
\[
\freeop(3)=\rs 2\otimes_{\zs 2}\left(\underbrace{\zs 3\otimes_{\zs 2}\rs 2}_{S_{2}\text{ generated by }(1,2)}\oplus\underbrace{\zs 3\otimes_{\zs 2}\rs 2}_{S_{2}\text{ generated by }(2,3)}\right)
\]
where $S_{2}=\ints_{2}$ swaps the summands and $\zs 3$ acts on $\freeop(3)$
by acting on the factors $\zs 3$ inside the parentheses.\end{rem*}
\begin{prop}
\label{pro:morphism-f-to-s}The identity map of $\rs 2$ uniquely
extends to an operad-morphism
\[
\xi:\freeop\to\s
\]
and the kernel is an operadic ideal (see section~5.2.16 of \cite{operad-loday})
denoted $\kerftos$. \end{prop}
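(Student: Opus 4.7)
The plan is to exploit the universal property of the free operad on a single $\Sigma$-module and then observe that kernels of operad morphisms are automatically operadic ideals, so both assertions follow from general nonsense rather than any specific feature of $\mathscr{S}$.

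First I would recall the universal property that characterizes $\freeop$: since $\freeop$ is the free operad generated by the $\zs 2$-chain-complex $\rs 2$ (placed in arity $2$), for \emph{any} operad $\mathcal{P}$ and \emph{any} $\zs 2$-equivariant chain-map $\phi\colon \rs 2 \to \mathcal{P}(2)$, there exists a unique operad morphism $\tilde{\phi}\colon \freeop \to \mathcal{P}$ whose arity-$2$ component is $\phi$. Apply this with $\mathcal{P} = \s$ and $\phi = \mathrm{id}\colon \rs 2 \to \rs 2 = \s(2)$ to obtain $\xi\colon \freeop \to \s$ and its uniqueness. Concretely, $\xi$ is forced on generators (the arity-$2$ piece) and then extended to nested compositions by repeatedly applying the structure maps $\gamma$ of $\s$; the explicit description of $\freeop(n)$ as iterated tensor products of copies of $\rs 2$ indexed by planar trees (as in the example $\freeop(3)$ above) shows both that such an extension exists and that it is forced at every stage.

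Next I would argue that $\kerftos := \ker\xi$ is an operadic ideal, which by definition (section~5.2.16 of \cite{operad-loday}) means that whenever an element of $\freeop(n)$ is substituted into one slot of an operad composition whose other inputs are arbitrary elements of $\freeop$, the result lies back in the ideal (and similarly for substitutions of arbitrary elements into slots of ideal elements). This is immediate from multilinearity of $\gamma$ and the fact that $\xi$ is an operad morphism: if $x \in \kerftos(n)$ and $y_{1},\dots,y_{n}$ are arbitrary, then
\[
\xi\bigl(\gamma(x\otimes y_{1}\otimes\cdots\otimes y_{n})\bigr)
= \gamma\bigl(\xi(x)\otimes \xi(y_{1})\otimes\cdots\otimes \xi(y_{n})\bigr) = 0
\]
since $\xi(x)=0$, and symmetrically if any $y_{i} \in \kerftos$. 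Closure under the $\zs n$-actions is automatic since $\xi$ is $\zs n$-equivariant in each arity, and closure under the differential follows because $\xi$ is a chain map.

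I do not expect any real obstacle here: the statement is essentially a formal consequence of the definition of a free operad together with the standard fact that kernels of morphisms of algebraic structures are ideals in the appropriate sense. The only thing requiring any care is confirming that the notion of ``operadic ideal'' used later in the paper agrees with the one for which this kernel-is-ideal lemma is routine; once that bookkeeping is settled (by pointing to \cite{operad-loday}), the proof reduces to the two displayed universal-property and multilinearity arguments above.
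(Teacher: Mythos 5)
Your proposal is correct and follows essentially the same route as the paper, whose entire proof is the remark that ``all statements follow immediately from the defining property of free operads.'' You simply make explicit the two routine ingredients the paper leaves implicit: the universal property of the free operad generated by $\rs 2$ applied to the identity $\rs 2\to\s(2)$, and the standard fact that the kernel of an operad morphism is an operadic ideal.
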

\begin{rem*}
The image, $\xi(\freeop)\subset\s$, is the suboperad generated by
$\s(2)=\rs 2$. \end{rem*}
\begin{proof}
All statements follow immediately from the defining property of free
operads.
\end{proof}
Although the construction of $\freeop$ is fairly complex, it is easy
to describe \emph{coalgebras} over $\freeop$:
\begin{prop}
\label{prop:steenrod-iso-free}The category of coalgebras over $\freeop$
is identical to that of Steenrod coalgebras.\end{prop}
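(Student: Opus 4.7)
The plan is to invoke the universal property of the free operad twice — once at the level of objects and once at the level of morphisms — and then transport everything across the standard hom-tensor adjunction.

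First I would recall the defining property of $\freeop$: as the free operad on the $\ints_2$-equivariant chain-complex $\rs 2$ (placed in arity $2$), operad-morphisms $\freeop \to \mathcal{P}$ into \emph{any} operad $\mathcal{P}$ are in natural bijection with $\ints_2$-equivariant chain-maps $\rs 2 \to \mathcal{P}(2)$. Applying this with $\mathcal{P} = \coend(C)$ for a chain-complex $C \in \chaincat$, a $\freeop$-coalgebra structure on $C$ (definition~\ref{def:coalg}) is therefore the same datum as a $\ints_2$-equivariant chain-map
\[
\rs 2 \longrightarrow \coend(C)(2) = \homz(C, C\otimes C),
\]
where the $\ints_2$-action on the target is induced by swapping the two factors of $C\otimes C$. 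Under the tensor-hom adjunction this is equivalent to a $\ints_2$-equivariant chain-map
\[
\delta : \rs 2 \otimes C \longrightarrow C\otimes C,
\]
which is precisely the Steenrod coalgebra structure of definition~\ref{def:Steenrod-coalgebra}. This establishes the bijection on objects.

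Next I would check morphisms. A morphism of $\freeop$-coalgebras $f : C \to D$ is a chain-map such that the two composites
\[
\freeop \longrightarrow \coend(C) \longrightarrow \coend(C, D), \qquad \freeop \longrightarrow \coend(D) \longrightarrow \coend(C, D)
\]
agree (where the two arrows on the right are post-composition with $f^{\otimes n}$ and pre-composition with $f$, respectively). Because both are operad-morphisms out of the free operad on $\rs 2$, they coincide if and only if their arity-$2$ restrictions $\rs 2 \to \homz(C, D\otimes D)$ coincide; translating across the adjunction, this is exactly commutativity of the square in definition~\ref{def:Steenrod-coalgebra}. Functoriality on both sides (composition and identities) is immediate.

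I do not expect any serious obstacle: the entire statement is a formal consequence of the adjunction $\{\text{operads}\} \rightleftarrows \{\ints_2\text{-equivariant chain-complexes}\}$ given by the free-operad functor in arity $2$, combined with the tensor-hom adjunction. The only mild subtlety is making sure the $\ints_2$-actions match up correctly (transposition of tensor factors on one side versus the bar-resolution action on the other), but this is forced by equivariance of $\delta$ and poses no real difficulty.
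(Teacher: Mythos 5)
Your proposal is correct and follows essentially the same route as the paper: the paper also identifies a Steenrod coalgebra structure with its adjoint map $\rs 2\to\homz(C,C\otimes C)=\coend(C)(2)$ and invokes the defining (universal) property of the free operad $\freeop$ to extend it uniquely to an operad-morphism $\freeop\to\coend(C)$, with the converse given by restriction to arity $2$. Your treatment of morphisms merely spells out what the paper dismisses as clear, so there is no substantive difference.
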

\begin{proof}
If $C$ is an $\freeop$-coalgebra then there exists a $\zs 2$-morphism
\[
\freeop(2)\otimes C=\rs 2\otimes C\to C\otimes C
\]
so $C$ is a Steenrod coalgebra. If $C$ is a Steenrod coalgebra,
it has an adjoint structure map
\[
\rs 2\to\homz(C,C\otimes C)=\coend(C)(2)
\]
 that \emph{uniquely} extends to an operad-morphism
\[
\freeop\to\coend(C)
\]
It is also clear that this correspondence respects morphisms.
\end{proof}
This has a number of interesting consequences:
\begin{thm}
\label{thm:universal-steenrod}If $C$ is a chain-complex, there exists
a universal Steenrod coalgebra $L_{\freeop}C$ --- the cofree coalgebra
over $\freeop$ cogenerated by $C$ --- equipped with a chain-map
\[
\varepsilon:L_{\freeop}C\to C
\]
with the property that, given any Steenrod coalgebra $D$ and any
chain-map $f:D\to C$, there exists a unique morphism of Steenrod
coalgebras
\[
\bar{f}:D\to L_{\freeop}C
\]
 that makes the diagram 
\[
\xymatrix{{D}\ar[r]^{\bar{f}}\ar[rd]_{f} & {L_{\freeop}C}\ar[d]^{\varepsilon}\\
{} & {C}
}
\]
 commute.\end{thm}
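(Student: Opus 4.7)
The strategy is to recognize that this theorem is an immediate application of the general cofree coalgebra construction to the free operad $\freeop$, combined with the identification of Steenrod coalgebras as $\freeop$-coalgebras already established.

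First I would invoke Proposition~\ref{prop:steenrod-iso-free} to replace the category $\steen$ of Steenrod coalgebras with the category of coalgebras over the operad $\freeop$. Under this identification, the theorem reduces to the assertion that the forgetful functor $\forgetful{\,\cdot\,}\colon\steen\to\bchaincat$ admits a right adjoint, i.e.\ that cofree $\freeop$-coalgebras exist.

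Next, I would verify that the free operad $\freeop$ is $\Sigma$-cofibrant in the sense of Definition before Remark above, so that the hypotheses of the cofree coalgebra construction in \cite{smith:cofree} apply. Since $\rs 2$ is the normalized bar-resolution of $\ints$ over $\zs 2$ and is therefore $\zs 2$-free, the inductive description of the free operad (as iterated tensor products over symmetric group algebras of copies of $\rs 2$, indexed by planar trees) shows that each $\freeop(n)$ is $\zs n$-projective. Appealing now to the construction recalled in Remark~\ref{rem:cofree-property}, we may define
\[
L_{\freeop}C \;\subseteq\; \prod_{n=1}^{\infty}\homzs n(\freeop(n),C^{\otimes n})
\]
as the maximal submodule on which the operadic structure maps of $\freeop$ dualize to give a well-defined $\freeop$-coalgebra structure, and take $\varepsilon\colon L_{\freeop}C\to C$ to be the projection onto the $n=1$ factor composed with evaluation at the operadic unit.

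Finally, given any Steenrod coalgebra $D$ with structure map $s\colon D\to\prod_{n\ge 1}\homzs n(\freeop(n),D^{\otimes n})$ and any chain map $f\colon\forgetful D\to C$, the classifying map $\bar f\colon D\to L_{\freeop}C$ is the composite of $s$ with $\prod_n\homzs n(1,f^{\otimes n})$, exactly as in diagram~\eqref{eq:classifying-map}; its image automatically lands in the subobject $L_{\freeop}C$ because $D$ itself is a coalgebra, and the inductive argument of \cite{smith:cofree} shows this $\bar f$ is the unique coalgebra morphism making the required triangle commute. The main technical obstacle, already dispatched in \cite{smith:cofree}, is verifying that the maximal coalgebra submodule exists inside the infinite product; everything else is formal from Proposition~\ref{prop:steenrod-iso-free} and the adjunction~\eqref{eq:cofree-adjunction}.
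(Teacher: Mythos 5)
Your proposal is correct and follows essentially the same route as the paper: the paper's own proof simply observes that the statement is the defining universal property of the cofree coalgebra over $\freeop$ (constructed in \cite{smith:cofree} and recalled in remark~\ref{rem:cofree-property}) combined with proposition~\ref{prop:steenrod-iso-free}. Your additional details --- the $\Sigma$-cofibrancy check for $\freeop$, the explicit description of $\varepsilon$, and the classifying map from diagram~\ref{eq:classifying-map} --- merely spell out what the paper delegates to the citation.
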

\begin{proof}
The conclusions are nothing but the defining properties of a cofree
coalgebra over $\freeop$. So the result follows immediately from
proposition~\ref{prop:steenrod-iso-free}.
\end{proof}

\section{The Dold-Kan functor and variants\label{sec:The-Dold-Kan-functor}}

Recall remark~\ref{assu:rdef} regarding the ring $\ring$. We recount
classic results regarding simplicial abelian groups:
\begin{defn}
\label{def:simplicial-abelian-groups}Let $\sab$ denote the category
of \emph{simplicial abelian groups} and $\sabz\subset\sab$ the full
subcategory of $\ints$-free pointed, reduced simplicial abelian groups.
If $A\in\sab$, let
\begin{enumerate}
\item $\moore A$ denote the \emph{Moore complex} of $A$ --- a (not necessarily
$\ints$-free chain complex made up of the simplices of $A$). Since
$A$ is a simplicial abelian group, every linear combination of simplices
of $A$ is \emph{also} a simplex of $A$ and the \emph{elements} of
$\moore A$ are the \emph{simplices} of $A$.
\item $NA\subset\moore A$ denote the \emph{normalized} chain-complex of
$A$ defined by
\[
NA_{n}=\bigcap_{i=0}^{n-1}\ker d_{i}\subset A_{n}
\]
where $d_{i}:A_{n}\to A_{n-1}$ are the face-operators. The boundary
is defined by $\partial_{n}=(-1)^{n}d_{n}:NA_{n}\to NA_{n-1}$. 
\end{enumerate}

The \emph{Dold-Kan functor} from the category of arbitrary chain complexes
(not necessarily $\ints$-free) to $\sab$ is denoted $\Gamma$.

\end{defn}
\begin{rem*}
It is well-known (see \cite[chapter III]{goerss-jardine}) that
\[
\pi_{i}(A)=H_{i}(\moore A)
\]
for $i\ge0$.
\end{rem*}
Given a simplicial set, we can construct a simplicial abelian group
from it:
\begin{defn}
\label{def:hurewicz-map}If $X$ is a simplicial set and $\ring$
is a ring following the conditions in remark~\ref{assu:rdef}, $\ring X$
denotes the $\ring$-free simplicial abelian group generated by $X$.
The \emph{Hurewicz map}
\[
h_{X}:X\to\ring X
\]
 sends a simplex $x\in X$ to $1\cdot x\in\ring X$.\end{defn}
\begin{rem*}
It is well-known (see \cite{bousfield-kan}, chapter~I, §~2), that
\[
\pi_{i}(\ring X)\cong H_{i}(X,\ring)
\]
 and that the Hurewicz map induces
\[
\pi_{i}(h_{X}):\pi_{i}(X)\to\pi_{i}(\ring X)=H_{i}(X,\ring)
\]
--- the Hurewicz homomorphism from homotopy groups to homology groups
(hence the name ``Hurewicz map'').
\end{rem*}
We have the classic Dold-Kan results (see \cite{kan-dold-kan} and
\cite[chapter III]{goerss-jardine}, corollary~2.3, theorem~2.5,
and corollary~2.12.):
\begin{prop}
\label{prop:dold-kan}For any chain-complex, $C$, concentrated in
nonnegative dimensions 
\[
N\Gamma C\cong C
\]
and for any simplicial abelian group $A$
\[
\Gamma NA\cong A
\]
These correspondences define an equivalence of categories between
the category of (not necessarily $\ints$-free) chain-complexes concentrated
in nonnegative dimensions and simplicial abelian groups.

In addition, there is an adjunction
\begin{equation}
N(*):\chaincatr\leftrightarrows\ss:\Gamma*\label{eq:chain-gamma-adjunction}
\end{equation}
where $N(*)$ is the (normalized) integral chain-complex functor (see
\cite[chapter~III]{goerss-jardine}). The functor $\Gamma*$ is defined
by
\begin{equation}
\Gamma(C)_{m}=\bigoplus_{\mathbf{m}\twoheadrightarrow\mathbf{n}}C_{n}\label{eq:dold-kan-description}
\end{equation}
for all $m>n\ge0$, where $\mathbf{m}$ and $\mathbf{n}$ are objects
of the ordinal number category $\mathbf{\Delta}$.\end{prop}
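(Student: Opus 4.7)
The plan is to follow the standard route for Dold--Kan, namely: take formula~\eqref{eq:dold-kan-description} as the \emph{definition} of $\Gamma$ on objects, promote it to a simplicial abelian group by supplying explicit face and degeneracy operators, check the two natural isomorphisms separately, and then deduce the adjunction as a formal consequence. Concretely, for $C \in \chaincatr$ I would declare $\Gamma(C)_m = \bigoplus_{\sigma\colon \mathbf{m}\twoheadrightarrow \mathbf{n}} C_n$, where the sum is over all surjections in $\ords$, and define the simplicial operators via the Eilenberg--Zilber machinery: given $\theta\colon \mathbf{m'}\to\mathbf{m}$ in $\mathbf{\Delta}$, the composite $\sigma\circ\theta$ factors uniquely as a surjection followed by an injection, and this factorization dictates, through the use of the face operator $d$ (and in particular $\partial_n=(-1)^n d_n$) on the chain complex $C$, how to map the summand indexed by $\sigma$ into $\Gamma(C)_{m'}$. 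Functoriality and the simplicial identities are then bookkeeping on $\ords$.

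Next I would prove $N\Gamma C \cong C$ by a direct inspection of the formula. The summand of $\Gamma(C)_n$ indexed by the identity surjection $\mathbf{n}\twoheadrightarrow\mathbf{n}$ is a copy of $C_n$, and every other summand corresponds to a nontrivial surjection, hence is ``degenerate'' in the sense that it lies in the image of some degeneracy and is therefore killed by the intersection $\bigcap_{i<n}\ker d_i$ after a short computation with the simplicial identities. A matching computation identifies the induced boundary on the identity-indexed summands with the original differential of $C$, up to the sign $(-1)^n$ built into the definition of $\partial$ on $N$.

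The harder direction is $\Gamma N A \cong A$, and this is where I expect the main obstacle. The essential input is the Eilenberg--Zilber--Cartier splitting lemma: for a simplicial abelian group $A$, every simplex $a \in A_m$ admits a \emph{unique} decomposition
\[
a = \sum_{\sigma\colon \mathbf{m}\twoheadrightarrow\mathbf{n}} \sigma^{*}(a_{\sigma}), \qquad a_{\sigma}\in NA_n,
\]
where the sum ranges over surjections in $\ords$. Proving this requires building, for each $m$, an explicit projector onto $NA_m$ (a signed alternating sum involving $d_m$ and the degeneracies) and then using induction on $m$ together with the simplicial identities to show that the resulting map $\Gamma N A \to A$ is bijective in each degree and natural in $A$. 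Once that splitting is in hand, naturality and compatibility with face/degeneracy maps are formal.

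Finally, to obtain the adjunction~\eqref{eq:chain-gamma-adjunction} I would argue abstractly: given a chain map $f\colon NX \to C$ and a simplicial abelian group structure on $X$, the decomposition above forces a unique simplicial extension $\tilde f\colon X \to \Gamma C$, namely $\tilde f(a) = \sum_{\sigma} \sigma^{*}(f(a_{\sigma}))$. Conversely any simplicial map $X \to \Gamma C$ restricts to a chain map $NX \to N\Gamma C \cong C$, and the two assignments are mutually inverse by the uniqueness clause of the splitting. Equivalence of categories follows by combining this adjunction with the two natural isomorphisms established above; no further work is needed since both unit and counit have just been exhibited as isomorphisms.
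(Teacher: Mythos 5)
Your outline is correct and is precisely the standard Dold--Kan argument (formal-degeneracy description of $\Gamma$, inspection giving $N\Gamma C\cong C$, the unique splitting of every simplex into degeneracies of normalized simplices giving $\Gamma NA\cong A$); the paper itself offers no proof, deferring to \cite{kan-dold-kan} and \cite[chapter~III]{goerss-jardine}, which is exactly where this argument lives. The only point to add is that the displayed adjunction~\eqref{eq:chain-gamma-adjunction} pairs $\chaincatr$ with the category $\ss$ of simplicial \emph{sets}, whereas your final step compares chain maps $NX\to C$ with maps of simplicial \emph{abelian groups} $X\to\Gamma C$; so you must compose that correspondence with the free abelian group/forgetful adjunction, reading $N(X)$ as $N(\ints X)$ for a simplicial set $X$.
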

\begin{rem*}
Equation~\ref{eq:dold-kan-description} simply says that one forms
all possible ``formal'' degeneracies of $C$ and defines face and
degeneracy operators to via the defining identities that they satisfy.
\end{rem*}
We define a \emph{pointed} variant:
\begin{defn}
\label{def:pgamma}If $C\in\chaincatr$, then $C=C^{+}\oplus\ints_{0}$,
where $\ints_{0}$ is concentrated in dimension $0$ and equal to
$\ints$ there. We define
\[
\pgam C=\Gamma C/\Gamma\ints_{0}\cong\Gamma C^{+}
\]
--- where $/$ denotes a quotient-group.\end{defn}
\begin{rem*}
\label{rem:pgamequalsgam}Since $C=C^{+}\oplus\ints_{0}$, the equivalence
of categories in proposition\ref{prop:dold-kan} implies that 
\[
\Gamma C=\pgam C\times\Gamma\ints_{0}
\]
If $C\in\chaincatp$, note that $\pgam C=\Gamma C$, since $\moore{\Gamma C}_{0}=0$
--- the \emph{trivial} abelian group.

The reason we need a pointed variant is that the Bousfield-Kan cosimplicial
resolution (see \cite{bousfield-kan}) of a space requires it to be
pointed.
\end{rem*}
We also have the free abelian group functor (denoted $FA*$ in \cite{kan-dold-kan}):
\begin{defn}
\label{def:pz}If $X\in\ss$ and $\ring$ is a ring satisfying remark~\ref{assu:rdef},
$\ring X$ is the \emph{$\ring$-free simplicial abelian group} generated
by the simplices of $X$. If $X\in\ssz$ --- i.e., if $X$ is pointed
and reduced --- then we have the Bousefield-Kan \emph{pointed} version
of the free abelian group functor (see \cite{bousfield-kan}), the
quotient 
\[
\pz X=\ring X/\ring*
\]
 where $*$ is the sub-simplicial set generated by the basepoint of
$X$.\end{defn}
\begin{prop}
\label{prop:chain-maps-induce-zs}If $X$ is a simplicial set and
$N(X)$ is the normalized integral chain-complex of $X$ then
\[
\ring X=\Gamma(N(X)\otimes\ring)
\]
 If $X$ is pointed and reduced then
\[
\ring X=\pz X\times\ring*=\pgam\left(N(X)\otimes\ring\right)\times\Gamma\ring_{0}
\]
If $X$ and $Y$ are pointed reduced simplicial sets and 
\[
f:N(X)\otimes\ring\to N(Y)\otimes\ring
\]
 is a chain-map of normalized chain-complexes, then $f$ induces
\[
\pz^{n-1}\pgam f:\pz^{n-1}\pgam\left(N(X)\otimes\ring\right)=\pz^{n}X\to\pz^{n-1}\pgam\left(N(Y)\otimes\ring\right)=\pz^{n}Y
\]
for all $n>0$.\end{prop}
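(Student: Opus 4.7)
The plan is to reduce everything to the Dold--Kan equivalence (proposition~\ref{prop:dold-kan}) together with the natural splitting of the normalized chain complex of a reduced simplicial set. First I would establish the identification $\ring X=\Gamma(N(X)\otimes\ring)$. The Moore complex of the simplicial abelian group $\ring X$ has $\moore{\ring X}_n=\ring X_n$, and dividing out the $\ring$-submodule generated by degenerate simplices produces $N(\ring X)\cong N(X)\otimes\ring$ (this is the identification $N$ of a free simplicial abelian group with the normalized chains of the underlying simplicial set, tensored up to $\ring$). Applying proposition~\ref{prop:dold-kan} then gives $\ring X=\Gamma N(\ring X)=\Gamma(N(X)\otimes\ring)$.

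For the pointed reduced case, I would exploit the fact that if $X$ is reduced then $X_0=\{*\}$, so $N(X)_0=\ring$ and the only boundary into $N(X)_0$ is $\partial_1=d_0-d_1=0$. Hence $N(X)\otimes\ring$ splits as a direct sum of chain complexes $(N(X)\otimes\ring)^+\oplus\ring_0$. Since $\Gamma$ sends direct sums to products of simplicial abelian groups (it is a right adjoint by equation~\ref{eq:chain-gamma-adjunction}, and the decomposition is also an easy consequence of equation~\ref{eq:dold-kan-description}), we obtain
\[
\ring X=\Gamma(N(X)\otimes\ring)=\pgam(N(X)\otimes\ring)\times\Gamma\ring_0,
\]
using definition~\ref{def:pgamma}. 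Noting $\Gamma\ring_0=\ring*$ (the sub-simplicial abelian group generated by the basepoint, whose Moore complex is concentrated in degree $0$), the definition $\pz X=\ring X/\ring*$ (definition~\ref{def:pz}) identifies $\pz X$ with the remaining factor $\pgam(N(X)\otimes\ring)$.

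For the last assertion, the key point is that any chain map $f:N(X)\otimes\ring\to N(Y)\otimes\ring$ is automatically compatible with the splitting: in each positive degree $n$ the codomain $(N(Y)\otimes\ring)_n$ coincides with $(N(Y)\otimes\ring)^+_n$, while in degree $0$ the map $f_0:\ring\to\ring$ sends the summand $\ring_0$ into $\ring_0$. Hence $f$ restricts to a chain map $f^+:(N(X)\otimes\ring)^+\to(N(Y)\otimes\ring)^+$, and I would define $\pgam f:=\Gamma f^+$, which is a pointed simplicial map $\pz X\to\pz Y$ of pointed reduced simplicial sets. Then iterating the pointed free abelian group functor $\pz$ on this map gives $\pz^{n-1}\pgam f:\pz^n X\to\pz^n Y$ by a trivial induction, using the identification $\pz^n X=\pz^{n-1}\pgam(N(X)\otimes\ring)$ from the previous paragraph.

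There is no real obstacle here: the argument is entirely formal, amounting to an unpacking of the Dold--Kan equivalence together with the bookkeeping of the basepoint splitting. The only mild subtlety worth being careful about is confirming that the splitting $N(X)\otimes\ring=(N(X)\otimes\ring)^+\oplus\ring_0$ is a splitting of \emph{chain complexes} (not merely graded modules), which rests on the vanishing of $\partial_1$ when $X$ is reduced.
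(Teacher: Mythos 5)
Your proposal is correct and follows the same route as the paper, whose proof simply invokes the Dold--Kan results of proposition~\ref{prop:dold-kan}; you have merely spelled out the details (the identification $N(\ring X)\cong N(X)\otimes\ring$, the chain-level splitting off of $\ring_{0}$ for reduced $X$, and the compatibility of any chain map with that splitting) that the paper leaves implicit. No gaps.
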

\begin{proof}
This follows immediately from the Dold-Kan results in proposition~\ref{prop:dold-kan}.
\end{proof}

\section{\label{sec:main-result}The Main result}

We begin with a definition:
\begin{defn}
\label{def:simplicial-sets}Let $\ss$ denote the category of \emph{simplicial
sets} and $\ssz$ that of \emph{pointed, reduced} simplicial sets.
\end{defn}
By following the procedure in appendix~\ref{sec:Functorial-Steenrod-coalgebras},
we get:
\begin{prop}
\label{prop:cf-intro}If $X\in\ss$ is a simplicial set, then the
unnormalized chain complex of $X$, $C(X)$ has a natural Steenrod
coalgebra structure and there exists a functor
\[
\cf *:\ss\to\steen
\]
from the category of simplicial sets to that of Steenrod coalgebras
concentrated in nonnegative dimensions. This projects to a Steenrod
coalgebra structure on the normalized chain-complex, $N(X)$.\end{prop}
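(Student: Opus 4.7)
The plan is to reduce this to the much stronger result, established in \cite{smith-cellular,smith-s-fundamental,smith:1994} and summarized in appendix~\ref{sec:Functorial-Steenrod-coalgebras}, that the unnormalized chain functor $C(*)$ factors through the category of $\s$-coalgebras, where $\s$ is the operad of example~\ref{exa:The-Barratt-Eccles-operad,} with $\s(n)=\rs n$. Once this functorial $\s$-coalgebra structure on $C(X)$ is in hand, the Steenrod coalgebra structure is immediate from the arity-$2$ piece.

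First I would invoke the appendix to produce the natural $\s$-coalgebra structure map $\alpha_{X}:\s\to\coend(C(X))$, functorial in simplicial maps $X\to Y$. Composing with the operad morphism $\xi:\freeop\to\s$ of proposition~\ref{pro:morphism-f-to-s} gives a natural $\freeop$-coalgebra structure $\alpha_{X}\circ\xi:\freeop\to\coend(C(X))$; by proposition~\ref{prop:steenrod-iso-free}, this is precisely a natural Steenrod coalgebra structure on $C(X)$. Concretely, the arity-$2$ component
\[
\xi_{X}:\rs 2\otimes C(X)\to C(X)\otimes C(X)
\]
is the $\zs 2$-equivariant chain-map called for in definition~\ref{def:Steenrod-coalgebra}, and a simplicial map $g:X\to Y$ induces a chain-map $C(g):C(X)\to C(Y)$ intertwining $\xi_{X}$ and $\xi_{Y}$ by naturality of $\alpha_{*}$. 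This gives the functor $\cf *:\ss\to\steen$.

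The remaining point is that this structure descends to the normalized chain-complex $N(X)=C(X)/D(X)$, where $D(X)\subset C(X)$ is the subcomplex of degenerate simplices. For this I would check that $\xi_{X}$ carries $\rs 2\otimes D(X)$ into $D(X)\otimes C(X)+C(X)\otimes D(X)$. This is the key compatibility and is the step I expect to be the main obstacle: while the Alexander--Whitney coproduct (corresponding to the generator $[\,]\in\rs 2$) is well-known to have this property, the higher components of $\xi_{X}$ associated with $[\sigma_{1}|\cdots|\sigma_{k}]\in\rs 2$ require one to inspect the explicit formulas for the $\s$-action recalled in the appendix and observe that each summand produced from a degenerate simplex still contains a degenerate tensor factor. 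Granting this, $\xi_{X}$ passes to the quotient and yields a natural Steenrod coalgebra structure
\[
\bar{\xi}_{X}:\rs 2\otimes N(X)\to N(X)\otimes N(X),
\]
giving the promised projection to the normalized chain-complex and completing the construction.
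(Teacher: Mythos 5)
Your route is legitimately different from the one the paper actually takes, and most of it would work. You outsource the existence of the structure to the functorial $\s$-coalgebra of \cite{smith-cellular,smith-s-fundamental,smith:1994} and then restrict along the operad morphism $\xi:\freeop\to\s$ of proposition~\ref{pro:morphism-f-to-s}, using proposition~\ref{prop:steenrod-iso-free} to identify $\freeop$-coalgebras with Steenrod coalgebras; naturality of the $\s$-action then gives the functor $\cf *:\ss\to\steen$. The paper's own proof (appendix~\ref{sec:Functorial-Steenrod-coalgebras}) is instead self-contained: theorem~\ref{thm:ns-construct} builds the diagonal on each standard simplex by induction using the contracting homotopy $\Phi=\varphi_{k}\otimes1+\iota_{k}\circ\epsilon\otimes\varphi_{k}$ of definition~\ref{def:simplex-contracting-cochain}, proves naturality with respect to order-preserving vertex maps, and proposition~\ref{prop:cf-functor} then assembles the global structure as a colimit over the simplex category $\boldsymbol{\Delta}\downarrow X$, invoking Theorem~B.3 of \cite{smith:model-cats} to know that the colimit of $\freeop$-coalgebras has the expected underlying chain-complex. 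Your approach buys brevity and avoids the colimit argument, but it leans entirely on the external references (note the appendix does \emph{not} supply the $\s$-structure map you propose to ``invoke''; it only constructs the arity-2 diagonal directly), whereas the paper's construction is explicit and manifestly functorial, which is exactly what it needs elsewhere.

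The one genuine gap is the point you yourself flag and then grant: that $\xi_{X}$ sends $\rs 2\otimes D(X)$ into $D(X)\otimes C(X)+C(X)\otimes D(X)$, so the structure descends to $N(X)$. This is not a routine remark --- it is the substantive content of the ``projects to the normalized chain-complex'' clause, and it is proved in the paper as the final statement of theorem~\ref{thm:ns-construct} by a short induction resting on three facts: the property holds for the Alexander--Whitney coproduct (the base of the induction), the boundary of a degenerate simplex is a linear combination of degenerate simplices, and $\Phi$ applied to a term with a degenerate factor again has a degenerate factor (since $\Phi$ is essentially $\varphi_{k}\otimes1$ above dimension $0$). In your setup the corresponding verification must be carried out for whatever explicit formulas define the $\s$-action you import from \cite{smith:1994}; without it, the passage to $N(X)$ is asserted rather than proved. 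Supply that inductive argument (or cite and check the analogous property of the imported formulas) and your proof is complete.
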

\begin{proof}
See appendix~\ref{sec:Functorial-Steenrod-coalgebras} and proposition~\ref{prop:cf-functor}
for the details.
\end{proof}
Recall the concept of \emph{degeneracy-free} simplicial sets in definition~\ref{def:degeneracy-free}
in appendix~\ref{sec:Simplicial-sets-and}. The main (only?) reason
we are interested in them is:
\begin{lem}
\label{lem:degeneracy-free-inclusion}If $X$ is a degeneracy-free
simplicial set, its nondegenerate simplices form a delta-complex,
$\bar{X}$, and there is a natural inclusion
\[
\bar{X}\to\sd(X)
\]
 inducing an inclusion of Steenrod coalgebras
\[
\iota:N(X)=N(\bar{X})\to N(\sd(X))=C(X)
\]
\end{lem}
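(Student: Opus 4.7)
The plan is to construct $\bar X$ as a semi-simplicial set, identify the two ends of $\iota$ as normalized chain complexes of natural (semi-)simplicial objects, and then deduce the Steenrod coalgebra property from the functoriality established in Proposition~\ref{prop:cf-intro}.

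First I would check that the nondegenerate simplices of $X$ form a delta-complex. By the Eilenberg--Zilber lemma every simplex of $X$ factors uniquely as $s_{I}(y)$ with $y$ nondegenerate. Working from Definition~\ref{def:degeneracy-free} and its intrinsic reformulation in Proposition~\ref{prop:intrinsic-degenracy-free}, a direct consequence of degeneracy-freeness is that every face of a nondegenerate simplex is itself nondegenerate --- otherwise the simplicial identities involving that face would yield a degeneracy relation not forced by the face/degeneracy axioms. Hence the nondegenerate simplices of $X$, equipped with the face operators inherited from $X$, form a semi-simplicial set $\bar X$. The chain complex $N(X) = C(X)/D(X)$ is free as a graded module on the nondegenerate simplices, and its boundary (the alternating face sum modulo degenerate simplices) coincides on each generator with the delta-complex boundary of $\bar X$, since all faces are already nondegenerate; this gives $N(X)=N(\bar X)$. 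The functor $\sd$ of Definition~\ref{def:sd-ds-functors} turns each simplex of $X$ into a distinct nondegenerate generator, so $N(\sd(X)) = C(X)$. The inclusion of nondegenerate simplices of $X$ into all simplices of $X$ is a morphism of semi-simplicial sets $\bar X \hookrightarrow \sd(X)$ whose effect on normalized chains is the asserted $\iota$.

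For the Steenrod coalgebra assertion I would invoke naturality of the construction of $\cf{-}$ in the appendix, which produces the diagonal $\xi$ out of face operators and equivariant symmetric-group data. Under degeneracy-freeness, $\xi_{X}$ sends a nondegenerate simplex to a sum of tensors each of whose factors is a face of that simplex, hence nondegenerate; so $\xi$ restricts to a diagonal on $N(X) \otimes N(X)$ and the inclusion $\iota$ intertwines the two diagonals. The main obstacle is precisely this preservation statement: one must verify that the particular construction of $\xi$ in the appendix --- not merely its homotopy class --- respects the nondegenerate subcomplex under the degeneracy-free hypothesis. Once that is checked, the commutative square making $\iota$ a morphism of Steenrod coalgebras follows by applying the functor $\cf{-}$ to the simplicial map $X \to \sd(X)$ extending $\bar X \hookrightarrow \sd(X)$, which exists because $X$ is itself the free simplicial set on $\bar X$ by degeneracy-freeness.
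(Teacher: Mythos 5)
Your proposal is correct and follows essentially the same route as the paper, which compresses the argument into the identification $X=\ds(\bar{X})$ together with proposition~\ref{prop:cx-is-nfx}: you simply make explicit the key point the paper leaves implicit, namely that in a degeneracy-free simplicial set every face of a nondegenerate simplex is nondegenerate, so the appendix's diagonal (whose terms involve only faces of the input simplex) restricts to the nondegenerate subcomplex and $\iota$ intertwines the two structures. The only slip is in your final sentence: there is no simplicial map $X\to\sd(X)$ (the target is a delta-complex), so you should instead use either the delta-complex inclusion $\bar{X}\to\sd(X)$ together with the functoriality of the chain-level construction, or the simplicial map $X=\ds(\bar{X})\to\ds(\sd(X))$ obtained by applying $\ds$ --- a harmless fix that does not affect the argument.
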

\begin{rem*}
See definition~\ref{def:sd-ds-functors} in appendix~\ref{sec:Simplicial-sets-and}
for the functor $\sd$. 

Although \emph{all} simplicial sets have an inclusion of \emph{chain-complexes}
\[
N(X)\to C(X)
\]
 the Steenrod coalgebra structure of $N(X)$ is defined as a \emph{quotient
}of that of $C(X)$ by the degenerate simplices. It follows that this
inclusion of chain-complexes does not necessarily imply one of Steenrod
coalgebras.\end{rem*}
\begin{proof}
This follows from proposition~\ref{prop:cx-is-nfx}.
\end{proof}
We also define
\begin{defn}
\label{def:pcmap}If $X$ is a pointed, reduced simplicial set with
unnormalized chain-complex $C(X)$ and $\ring$ is a ring satisfying
remark~\ref{assu:rdef}, then 
\[
C(X)\otimes\ring=\{\pz X\}
\]
 --- the Moore complex of $\pz X$ --- and we can define a chain map
\[
\gamma_{X}:C(\pz X)\otimes\ring\to C(X)\otimes\ring
\]
by $\ring$-linear extension. This chain-map induces a morphism of
Steenrod coalgebras:
\[
F_{X}:C(\pz X)\otimes\ring\to L_{\freeop}(C(X)\otimes\ring)
\]
(via the adjunction in equation~\ref{eq:cofree-adjunction} --- also
see diagram~\ref{eq:classifying-map}) where $L_{\freeop}(C(X)\otimes\ring)$
is the cofree coalgebra constructed in \cite{smith:cofree}.\end{defn}
\begin{prop}
\label{prop:composite-is-identity}If $X$ is a pointed, reduced simplicial
set with normalized chain-complex $N(X)$, $\ring$ is a ring satisfying
remark~\ref{assu:rdef}, and 
\[
h_{X}:X\to\pz X
\]
 is the Hurewicz map, (see definition~\ref{def:hurewicz-map}) then
the composite chain map
\[
C(X)\otimes\ring\xrightarrow{C(h_{X})}C(\pz X)\otimes\ring\xrightarrow{\gamma_{X}}C(X)\otimes\ring
\]
 is the identity map of $C(X)\otimes\ring$, where $\gamma_{X}$ is
defined in definition~\ref{def:pcmap}.\end{prop}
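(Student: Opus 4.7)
The proof is essentially a direct verification on basis elements, since both maps are defined in very concrete terms. The plan is to evaluate the composite on a simplex $x\in X_n$ viewed as a basis element of $C(X)\otimes\ring$ and show we recover $x$ itself.

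First, I would unwind the Hurewicz map. By definition~\ref{def:hurewicz-map}, $h_X$ sends a simplex $x\in X_n$ to the element $1\cdot x$ of $\ring X$, which descends to a simplex of $\pz X = \ring X/\ring*$. So $C(h_X)$, applied to the basis element $x\in C(X)_n$, produces the element $h_X(x)= 1\cdot x \in (\pz X)_n$, viewed as a generator of $C(\pz X)\otimes\ring$.

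Next, I would invoke the definition of $\gamma_X$ (definition~\ref{def:pcmap}): because $C(X)\otimes\ring = \{\pz X\}$, every simplex $\alpha\in(\pz X)_n$ is a priori an $\ring$-linear combination $\sum_i r_i x_i$ of simplices $x_i\in X_n$, and $\gamma_X$ sends the basis generator $\alpha$ of $C(\pz X)\otimes\ring$ to the element $\sum_i r_i\cdot x_i\in C(X)\otimes\ring$. Applying this to the particular simplex $\alpha = 1\cdot x$ yields $\gamma_X(1\cdot x) = 1\cdot x = x\in C(X)\otimes\ring$.

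Putting these two steps together shows $\gamma_X\circ C(h_X)(x) = x$ on every simplex $x$ of $X$, and by $\ring$-linear extension on all of $C(X)\otimes\ring$ we obtain the identity. There is no real obstacle here; the only point requiring care is to distinguish between a simplex of $\pz X$ \emph{as a generator} of $C(\pz X)\otimes\ring$ and that same simplex \emph{as a linear combination of simplices of $X$} in $C(X)\otimes\ring$. This is precisely the content of the identification $C(X)\otimes\ring=\{\pz X\}$, so once that identification is cleanly invoked the calculation is immediate.
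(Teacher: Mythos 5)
Your proposal is correct and matches the paper's own proof, which likewise simply verifies on each simplex that $h_{X}(x)=1\cdot x\in\pz X$ and that $\gamma_{X}(1\cdot x)=x$, then extends by linearity. Your added remark about distinguishing a simplex of $\pz X$ as a generator of $C(\pz X)\otimes\ring$ from its expression as a linear combination in $C(X)\otimes\ring$ is a fair elaboration of the same argument.
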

\begin{proof}
Just verify this on each simplex: if $x\in X$, then $h_{X}(x)=1\cdot x\in\pz X$
and $f(1\cdot x)=1\cdot x=x\in N(X)\otimes\ring$.\end{proof}
\begin{cor}
\label{cor:classifying-map-space-hurewicz}If $X$ is a pointed, reduced
degeneracy-free simplicial set, with normalized chain-complex $N(X)$,
$\ring$ is a ring satisfying remark~\ref{assu:rdef}, then the diagram
\[
\xymatrix{{N(X)\otimes\ring}\ar[rd]^{\alpha_{X}}\ar[d]_{N(h_{X})\otimes1} & {}\\
{C(\pz X)\otimes\ring}\ar[r]_{F_{X}} & {L_{\freeop}(C(X)\otimes\ring)}
}
\]
commutes, where
\begin{enumerate}
\item $h_{X}:X\to\pz X$ is the Hurewicz map (see definition~\ref{def:hurewicz-map}),
\item $\alpha_{X}:N(X)\otimes\ring\to L_{\freeop}(C(X)\otimes\ring)$ is
the unique morphism of Steenrod coalgebras induced by the chain-map
\[
\iota\otimes1:N(X)\otimes\ring\to C(X)\otimes\ring
\]
where $\iota$ is defined in lemma~\ref{lem:degeneracy-free-inclusion},
\item $F_{X}:C(\pz X)\otimes\ring\to L_{\freeop}(C(X)\otimes\ring)$ is
the unique morphism of Steenrod coalgebras induced by $\gamma_{X}$
in definition~\ref{def:pcmap}.
\end{enumerate}
\end{cor}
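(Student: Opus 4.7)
The plan is to exploit the universal property of the cofree Steenrod coalgebra $L_{\freeop}(C(X)\otimes\ring)$ recorded in theorem~\ref{thm:universal-steenrod}. Both the composite $F_X\circ (N(h_X)\otimes 1)$ and the map $\alpha_X$ are morphisms of Steenrod coalgebras with common source $N(X)\otimes\ring$ and common target $L_{\freeop}(C(X)\otimes\ring)$. Such a morphism into a cofree coalgebra is uniquely determined by its composite with the cogeneration map $\varepsilon:L_{\freeop}(C(X)\otimes\ring)\to C(X)\otimes\ring$, so it suffices to check equality of chain maps after postcomposing with $\varepsilon$.

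Unwinding the classifying-map diagram~\ref{eq:classifying-map} gives $\varepsilon\circ\alpha_{X}=\iota\otimes 1$ and $\varepsilon\circ F_{X}=\gamma_{X}$, so the problem reduces to the chain-level identity
\[
\gamma_{X}\circ(N(h_{X})\otimes 1)\;=\;\iota\otimes 1 \colon N(X)\otimes\ring\to C(X)\otimes\ring.
\]
This is precisely the content of proposition~\ref{prop:composite-is-identity} restricted to nondegenerate simplices: for a simplex $x\in N(X)$ and $r\in\ring$, the Hurewicz map sends $x$ to $1\cdot x\in\pz X$, and the $\ring$-linear extension $\gamma_{X}$ sends $(1\cdot x)\otimes r$ back to $x\otimes r$, which under $\iota$ is the image of $x\otimes r$ in $C(X)\otimes\ring$. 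No subtle sign or degeneracy-cancellation arises because the argument is evaluated on nondegenerate generators only.

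The main obstacle, and the only place where degeneracy-freeness enters, is in justifying the preliminary claim that $F_{X}\circ(N(h_{X})\otimes 1)$ genuinely is a morphism of Steenrod coalgebras, so that the universal property applies. The composite $C(h_{X})\otimes 1$ is a Steenrod coalgebra morphism by functoriality of $\cf{*}$ (proposition~\ref{prop:cf-intro}) applied to the simplicial map $h_{X}\colon X\to\pz X$, but $N(h_{X})$ reaches this through the inclusion $\iota\colon N(X)\hookrightarrow C(X)$, and this inclusion is generally \emph{not} a Steenrod coalgebra morphism because the Steenrod structure on $N(X)$ is defined by quotienting out degenerate simplices. Lemma~\ref{lem:degeneracy-free-inclusion} is precisely what rescues the argument: when $X$ is degeneracy-free, $\iota$ is a Steenrod coalgebra morphism, so composing with the functorially induced $C(h_{X})\otimes 1$ gives the desired Steenrod structure on $N(h_{X})\otimes 1$ and the universal-property reduction goes through.
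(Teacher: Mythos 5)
Your proof is correct and follows essentially the same route as the paper's: both arguments reduce commutativity of the triangle to the uniqueness of classifying maps into the cofree coalgebra $L_{\freeop}(C(X)\otimes\ring)$ (theorem~\ref{thm:universal-steenrod}) combined with the chain-level identity of proposition~\ref{prop:composite-is-identity}, which shows that $\alpha_{X}$ and $F_{X}\circ(N(h_{X})\otimes1)$ are both induced by $\iota\otimes1$. Your explicit justification that $N(h_{X})\otimes1$ is a morphism of Steenrod coalgebras --- factoring it as $C(h_{X})\circ\iota$ and invoking lemma~\ref{lem:degeneracy-free-inclusion} for degeneracy-free $X$ --- is a more careful rendering of the paper's terse assertion that this holds because $h_{X}$ is simplicial, not a different approach.
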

\begin{proof}
Since $h_{X}:X\to\pz X$ is \emph{simplicial,} $N(h_{X})\otimes1$
is a morphism of Steenrod coalgebras. Proposition~\ref{prop:composite-is-identity}
implies that the morphisms $\alpha_{X}$ and $F_{X}\circ(N(h_{X})\otimes1)$
are both induced by $\iota\otimes1$. Since induced maps to cofree
coalgebras are \emph{unique,} the triangle must commute (see theorem~\ref{thm:universal-steenrod}).
\end{proof}
One of the main results in this paper is:
\begin{thm}[Injectivity Theorem]
\label{thm:injectivity-theorem}Under the hypotheses of definition~\ref{def:pcmap},
the map 
\[
F_{X}:C(\pz X)\otimes\ring\to L_{\freeop}(C(X)\otimes\ring)
\]
 is injective. \end{thm}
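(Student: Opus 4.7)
The plan is to exploit the universal property of the cofree Steenrod coalgebra $L_{\freeop}$: by Theorem~\ref{thm:universal-steenrod}, $F_X(c) = 0$ if and only if for every $n \geq 1$ and every $x \in \freeop(n)$, the iterated composite $\gamma_X^{\otimes n} \circ \beta_n(x \otimes c) \in (C(X) \otimes \ring)^{\otimes n}$ vanishes, where $\beta_n$ is the Steenrod coalgebra structure map of $C(\pz X) \otimes \ring$. Injectivity of $F_X$ is therefore equivalent to the statement that these iterated diagonal operations, post-composed with the appropriate power of $\gamma_X$, jointly separate elements of $C(\pz X) \otimes \ring$.

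I would decompose a general element as $c = \sum_k c_k [y_k]$, with $c_k \in \ints$ and each $y_k = \sum_l r_{k,l} x_{k,l}$ a formal $\ring$-sum of simplices of $X$, and argue by induction on simplicial dimension. For the base case, Corollary~\ref{cor:classifying-map-space-hurewicz} combined with Proposition~\ref{prop:composite-is-identity} shows that $F_X$ is injective on the image of the section $C(h_X) \otimes 1$ of $\gamma_X$, which handles the piece of $c$ lying in that subspace. For the remainder, observe that the Alexander-Whitney diagonal on $[y_k]$ is $\sum_p [\sum_l r_{k,l}\,x_{k,l}^{[0,p]}] \otimes [\sum_l r_{k,l}\,x_{k,l}^{[p,n]}]$; composing with $\gamma_X^{\otimes 2}$ and using bilinearity of tensor product over $\ring$ produces cross-terms $r_{k,l}\,r_{k,l'}\,x_{k,l}^{[0,p]} \otimes x_{k,l'}^{[p,n]}$ with $l \neq l'$ that are invisible to $\gamma_X$ alone. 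Iterating to obtain higher-arity diagonals yields multilinear expressions of arbitrary degree in the coefficients $r_{k,l}$; by isolating homogeneous components and polarizing, one derives enough constraints on the $c_k$ and $r_{k,l}$ to force $c = 0$ modulo the image of the Hurewicz section, which is then handled by the base case.

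The main obstacle is making the polarization step work uniformly across all permitted rings. For $\ring$ a subring of $\rats$, polarization of the multilinear data from iterated AW diagonals suffices by standard multilinear algebra. For $\ring = \ints_p$, however, the identity $r^p = r$ causes iterated AW data alone to degenerate, so one must exploit the full strength of $\freeop$, incorporating the higher cup-$i$ products (positive-degree elements of $\rs 2$) whose compositions produce Frobenius-twisted operations of the type illustrated at prime $3$ in equation~(\ref{eq:prime3}) of the Introduction. These operations, available because $\freeop$ is the free operad on all of $\rs 2$ rather than merely its degree-zero part, provide the additional identities mod $p$ required to separate formal sums and close the induction.
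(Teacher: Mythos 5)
There is a genuine gap: the decisive separation step is asserted rather than proved, and the one fact that actually makes the theorem work never appears in your plan. The paper's proof does not rest on Alexander--Whitney cross-terms at all. Its engine is Proposition~\ref{pro:simplicespropertyS}: for \emph{every} $m$-simplex $\sigma$ (in particular for every simplex $[y]$ of $\pz X$, where $y=\sum_l r_l x_l$ is a formal $\ring$-combination of simplices of $X$) the \emph{top} component $e_m\in\rs 2$ satisfies $\xi(e_m\otimes\sigma)=\pm\,\sigma\otimes\sigma$ exactly. Feeding the operad composites $Z_k=e_m\circ_1\cdots\circ_1 e_m$ through Proposition~\ref{prop:composition-coalgebra} then produces, after applying $\gamma_X^{\otimes k}$, the full tensor powers $y^{\otimes k}$ for all $k$, so that $F_X$ composed with an evaluation map records the sequence $(1,y,y\otimes y,\dots)$ for each basis element $[y]$ of $C(\pz X)\otimes\ring$. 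Injectivity then reduces to Lemma~\ref{lem:diagonals-linearly-independent}, a Vandermonde-determinant argument over the fraction field $\field$ showing these ``exponential'' sequences attached to distinct $y$'s are linearly independent. Your proposal never obtains the diagonal elements $y^{\otimes k}$ (AW only yields front-face/back-face truncations), and the claim that ``isolating homogeneous components and polarizing'' forces $c=0$ is exactly the hard combinatorial point; nothing in the proposal establishes it, and it is not obviously true, e.g.\ when distinct simplices of $X$ share all of their proper faces. The induction on simplicial dimension also buys nothing: the problem lives entirely within a fixed dimension, where $C(\pz X)_m\otimes\ring$ is essentially the group ring on $(C(X)\otimes\ring)_m$ and the span of the genuine simplices of $X$ (your ``base case'') is a negligible subspace.

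Your diagnosis of a mod-$p$ obstruction, and the proposed cure via ``Frobenius-twisted operations'' built from higher cup-$i$ products, is likewise not an argument --- no identities are exhibited and no separation statement is derived from them. In the paper there is no case split at $p$ whatsoever: the positive-degree part of $\rs 2$ enters only through the single exact identity for $e_m$ above, which holds over $\ints$, and the linear-independence lemma works over any field ($\ints_p$ or $\rats$) because the Vandermonde determinant $\prod_{i<j}\bigl(f(c_i)-f(c_j)\bigr)$ is a nonzero element of a polynomial ring over $\field$, an integral domain. So the characteristic-$p$ difficulty you try to engineer around simply does not arise once the correct coalgebra operations (the top cup-$i$ product and its operadic iterates) are used; without them, neither the rational nor the mod-$p$ half of your argument closes.
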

\begin{rem*}
This is essentially the only place we need lemma~\ref{lem:degeneracy-free-inclusion},
which is the only reason we are interested in degeneracy-free simplicial
sets.

The commutative diagram in corollary~\ref{cor:classifying-map-space-hurewicz}
and this result imply that 
\[
N(h_{X})\otimes1=F_{X}^{-1}\circ\alpha_{X}:N(X)\otimes\ring\to C(\pz X)\otimes\ring
\]
so that the geometrically-relevant Hurewicz map is \emph{uniquely
determined} by the Steenrod coalgebra structure of $N(X)\otimes\ring$.\end{rem*}
\begin{proof}
See appendix~\ref{sec:Proof-of-theorem}.\end{proof}
\begin{prop}
\label{prop:hurewicz-commutes}If $X$ and $Y$ are pointed reduced
degeneracy-free simplicial sets, $\ring$ is a ring satisfying remark~\ref{assu:rdef},
and 
\[
f:N(X)\otimes\ring\to N(Y)\otimes\ring
\]
is a morphism of Steenrod coalgebras, then the diagram
\[
\xymatrix{{N(X)\otimes\ring}\ar[r]^{f}\ar[d]_{N(h_{X})\otimes1} & {N(Y)\otimes\ring}\ar[d]^{N(h_{Y})\otimes1}\\
{N(\pz X)\otimes\ring}\ar[r]_{N(\pgam f)\otimes1} & {N(\pz Y)\otimes\ring}
}
\]
 commutes, where $\pgam f:\pz X\to\pz Y$ is defined in proposition~\ref{prop:chain-maps-induce-zs}.\end{prop}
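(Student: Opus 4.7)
The plan is to exploit the injectivity of $F_Y$ (theorem~\ref{thm:injectivity-theorem}) together with the uniqueness clause in the universal property of the cofree Steenrod coalgebra (theorem~\ref{thm:universal-steenrod}). First, I post-compose both paths around the desired square with the natural inclusion $\iota_{\pz Y}:N(\pz Y)\otimes\ring\hookrightarrow C(\pz Y)\otimes\ring$ (the Dold--Kan inclusion of the normalized complex into the Moore complex of the simplicial abelian group $\pz Y$), and then with $F_Y$. Since $\iota_{\pz Y}$ is injective as the inclusion of a direct summand and $F_Y$ is injective by theorem~\ref{thm:injectivity-theorem}, it suffices to show that the two resulting composites into $L_{\freeop}(C(Y)\otimes\ring)$ agree.

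By corollary~\ref{cor:classifying-map-space-hurewicz} applied to $Y$, the ``right-then-down'' composite $F_Y\circ\iota_{\pz Y}\circ(N(h_Y)\otimes 1)\circ f$ equals $\alpha_Y\circ f$, where $\alpha_Y$ is the classifying Steenrod coalgebra morphism of the chain map $\iota_Y\otimes 1:N(Y)\otimes\ring\to C(Y)\otimes\ring$ from lemma~\ref{lem:degeneracy-free-inclusion}. Since $f$ is itself a Steenrod coalgebra morphism, $\alpha_Y\circ f$ is a Steenrod coalgebra morphism lifting $(\iota_Y\otimes 1)\circ f:N(X)\otimes\ring\to C(Y)\otimes\ring$. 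On the other hand, the ``down-then-right'' composite $F_Y\circ\iota_{\pz Y}\circ(N(\pgam f)\otimes 1)\circ(N(h_X)\otimes 1)$ is also a Steenrod coalgebra morphism: each factor is one, since $N(h_X)\otimes 1$ and $N(\pgam f)\otimes 1$ come from the simplicial maps $h_X$ and $\pgam f$, while $F_Y$ is one by construction.

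I therefore need to verify that its augmentation, namely $\gamma_Y\circ\iota_{\pz Y}\circ(N(\pgam f)\otimes 1)\circ(N(h_X)\otimes 1)$ (using $\varepsilon\circ F_Y=\gamma_Y$ from definition~\ref{def:pcmap}), also equals $(\iota_Y\otimes 1)\circ f$. Evaluating on $1\otimes x$ with $x$ a nondegenerate simplex of $X$: $N(h_X)$ sends $x$ to the simplex $1\cdot x$ of $\pz X$; the Dold--Kan description of $\pgam f$ sends this to $f(1\otimes x)\in N(Y)\otimes\ring$ regarded as a single simplex of $\pz Y$; and $\gamma_Y$, being the $\ring$-linear extension of the identity on underlying simplices, returns $f(1\otimes x)$ as an element of $N(Y)\otimes\ring\subset C(Y)\otimes\ring$ --- which is exactly $(\iota_Y\otimes 1)\circ f$ applied to $1\otimes x$. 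Uniqueness of the classifying map then forces the two Steenrod coalgebra morphisms to coincide in $L_{\freeop}(C(Y)\otimes\ring)$, and the injectivity of $F_Y\circ\iota_{\pz Y}$ yields the desired commutativity. The main obstacle is the chain-level evaluation in the last step, which requires unpacking the Dold--Kan description of $\pgam f$ and uses degeneracy-freeness of $Y$ through lemma~\ref{lem:degeneracy-free-inclusion} to make $\iota_Y$ a valid Steenrod coalgebra inclusion.
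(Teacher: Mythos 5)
Your overall strategy (push everything into the cofree coalgebra $L_{\freeop}(C(Y)\otimes\ring)$ via corollary~\ref{cor:classifying-map-space-hurewicz}, invoke the uniqueness clause of theorem~\ref{thm:universal-steenrod}, then cancel an injective map using theorem~\ref{thm:injectivity-theorem}) is close in spirit to the paper's, but there is a genuine gap at the step where you assert that the ``down-then-right'' composite is a morphism of Steenrod coalgebras. That composite contains the splitting $\iota_{\pz Y}:N(\pz Y)\otimes\ring\to C(\pz Y)\otimes\ring$, and you justify every factor \emph{except} this one. (Note also a misidentification: $C(\pz Y)$ is the unnormalized chain complex of the simplicial set underlying $\pz Y$, not the Moore complex of $\pz Y$; by definition~\ref{def:pcmap} the Moore complex is $C(Y)\otimes\ring$.) As the remark following lemma~\ref{lem:degeneracy-free-inclusion} stresses, a chain-level inclusion of a normalized complex into an unnormalized one is in general \emph{not} a map of Steenrod coalgebras, since the Steenrod structure on $N(-)$ is a quotient of that on $C(-)$; the lemma repairs this only for degeneracy-free simplicial sets, and the degeneracy-freeness hypothesis applies to $X$ and $Y$, not to the simplicial abelian group $\pz Y$, which is neither assumed nor in general degeneracy-free. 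Without this, the uniqueness clause cannot be applied to your second composite and the argument does not close. The gap is not cosmetic: the commutativity being proved forces (via proposition~\ref{pro:simplicespropertyS}) that $f$ send each nondegenerate simplex to a single simplex or zero rather than a general linear combination --- for a chain map with $f(x)=y_{1}+y_{2}$ one path gives the single basis element $\langle y_{1}+y_{2}\rangle$ of $N(\pz Y)\otimes\ring$ while the other gives $\langle y_{1}\rangle+\langle y_{2}\rangle$ --- so establishing that your composite is a Steenrod morphism carries essentially the full content of the proposition and cannot be waved through. (Your closing sentence flags the chain-level evaluation as the main obstacle; that part is in fact fine, and this is the real one.)

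The paper avoids the problem by never re-embedding $N(\pz Y)$ into $C(\pz Y)$: it compares the honest Steenrod morphisms $C(\pgam f)\otimes1:C(\pz X)\otimes\ring\to C(\pz Y)\otimes\ring$ (unnormalized chains of a simplicial map) with the composites $C(h_{X})\circ(\iota_{X}\otimes1)$ and $C(h_{Y})\circ(\iota_{Y}\otimes1)$, which are Steenrod morphisms by lemma~\ref{lem:degeneracy-free-inclusion} applied to the degeneracy-free objects $X$ and $Y$; it runs the uniqueness argument for $F_{X}$, $F_{Y}$, $\alpha_{X}$, $\alpha_{Y}$ entirely at that unnormalized level, and only passes to normalized complexes at the very end. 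If you reroute your argument through $C(\pz Y)$ in this way, so that every map fed to the uniqueness clause is already known to respect the Steenrod diagonals, your proof becomes essentially the paper's.
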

\begin{rem*}
The map $\pgam f:\pz X\to\pz Y$ is the main reason we are interested
in degeneracy-free simplicial sets:
\begin{quotation}
We would \emph{like} a map $\pz X\to\pz Y$ to exist with the property
that the induced map of \emph{Moore complexes}
\[
\moore{\pz X}\to\moore{\pz Y}
\]
\emph{coincides} with a given chain map $f:C(X)\otimes\ring\to C(Y)\otimes\ring$
of \emph{unnormalized chain-complexes. }Unfortunately we cannot guarantee
this unless we start with a chain-map of \emph{normalized} chain-complexes
and apply the $\pgam$-functor to it (see \ref{prop:chain-maps-induce-zs}).
Hence the need for normalized chain-complexes and degeneracy-free
simplicial sets.
\end{quotation}
These two data-points (i.e., the chain-complex and the chain-map induced
by the Hurewicz map) suffice to define $\ring^{\bullet}X$ --- the
cosimplicial space used to construct Bousfield and Kan's $\ring$-completion,
$\ring_{\infty}X$ (see \cite{bousfield-kan}).\end{rem*}
\begin{proof}
The fact that $\pgam f$ (see definition~\ref{prop:chain-maps-induce-zs})
maps each simplex of $\pz X$ (i.e., generator of $N(\pz X)$) via
$f$ implies that the diagram of \emph{chain-maps}
\[
\xyC{50pt}\xymatrix{{N(\pz X)\otimes\ring}\ar[r]^{C(\pgam f)\otimes1}\ar[d]_{\gamma_{X}} & {N(\pz Y)\otimes\ring}\ar[d]^{\gamma_{Y}}\\
{N(X)\otimes\ring}\ar[r]_{f} & {N(Y)\otimes\ring}
}
\]
commutes, where $\gamma_{X}$ and $\gamma_{Y}$ are given in definition~\ref{def:pcmap}.
The uniqueness of induced maps to cofree coalgebras (see definition~\ref{def:cofreecoalgebra}
and theorem~\ref{thm:universal-steenrod}) and the fact that the
target, $L_{\freeop}\left(C(Y)\otimes\ring\right)$, is \emph{cofree}
implies that the induced diagram of Steenrod coalgebras
\[
\xymatrix{{C(\pz X)\otimes\ring}\ar[r]^{C(\pgam f)\otimes1}\ar[d]_{F_{X}} & {C(\pz Y)\otimes\ring}\ar[d]^{F_{Y}}\\
{L_{\freeop}\left(C(X)\otimes\ring\right)}\ar[r]_{L_{\freeop}f} & {L_{\freeop}\left(C(Y)\otimes\ring\right)}
}
\]
commutes. The conclusion follows from the commutativity of the diagram
\[
\xymatrix{{N(\pz X)\otimes\ring}\ar[r]^{N(\pgam f)\otimes1} & {N(\pz Y)\otimes\ring}\\
{C(\pz X)\otimes\ring}\ar[r]^{C(\pgam f)\otimes1}\ar[d]_{F_{X}}\ar[u] & {C(\pz Y)\otimes\ring}\ar[d]^{F_{Y}}\ar[u]\\
{L_{\freeop}\left(C(X)\otimes\ring\right)}\ar[r]_{L_{\freeop}\{\pgam f\}\otimes1} & {L_{\freeop}\left(C(Y)\otimes\ring\right)}\\
{N(X)\otimes\ring}\ar[r]_{f}\ar[u]^{\alpha_{X}}\ar@/^{4pc}/[uu]^{N(h_{X})\otimes1} & {N(Y)\otimes\ring}\ar[u]_{\alpha_{Y}}\ar@/_{4pc}/[uu]_{N(h_{Y})\otimes1}
}
\]
where $\alpha_{X}:N(X)\otimes\ring\to L_{\freeop}\left(C(X)\otimes\ring\right)$
and $\alpha_{Y}:N(Y)\otimes\ring\to L_{\freeop}\left(C(Y)\otimes\ring\right)$
are induced by the inclusions of Steenrod coalgebras
\begin{align*}
\iota_{X}:N(X) & \to C(X)\\
\iota_{Y}:N(Y) & \to C(Y)
\end{align*}
 respectively (compare corollary~\ref{cor:main-hurewicz}).
\end{proof}
Our main topological result is
\begin{thm}
\label{cor:main-hurewicz}Let $X$ and $Y$ be pointed, reduced degeneracy-free
simplicial sets with normalized chain-complexes $N(X)$ and $N(Y)$,
respectively, with their functorial Steenrod diagonals. If $\ring$
is a ring satisfying remark~\ref{assu:rdef} and 
\[
f:N(X)\otimes\ring\to N(Y)\otimes\ring
\]
is a morphism of Steenrod coalgebras, then $f$ induces
\[
f_{\infty}:\ring_{\infty}X\to\ring_{\infty}Y
\]
of $\ring$-completions that makes the diagram 
\begin{equation}
\xymatrix{{X}\ar[d]_{\phi_{X}} & {Y}\ar[d]^{\phi_{Y}}\\
{\ring_{\infty}X}\ar[r]^{f_{\infty}}\ar[d]_{q_{X}} & {\ring_{\infty}Y}\ar[d]^{q_{Y}}\\
{\pz X}\ar[r]_{\tilde{f}} & {\pz Y}
}
\label{eq:hurewicz-main-dia}
\end{equation}
commute. Here\textup{ }
\begin{eqnarray*}
\phi_{X}:X & \to & \ring_{\infty}X\\
\phi_{Y}:Y & \to & \ring_{\infty}Y
\end{eqnarray*}
 are the canonical maps (see 4.2 in \cite[chapter~I]{bousfield-kan})
and $q_{X}$ and $q_{Y}$ are maps to the 0-coskeleta.\end{thm}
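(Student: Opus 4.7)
The plan is to lift $f$ to a morphism of Bousfield--Kan cosimplicial resolutions and then apply totalization. First, set $\tilde f := \pgam f \colon \pz X \to \pz Y$; this exists as a map of pointed simplicial abelian groups by Proposition~\ref{prop:chain-maps-induce-zs}. Since $\pz$ is a functor on pointed reduced simplicial sets, iteration already yields compatible maps
\[
\pz^{n-1}\pgam f \colon \pz^n X \to \pz^n Y
\]
for every $n \ge 1$.

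Next, I would verify that these maps assemble into a morphism $\ring^\bullet \tilde f \colon \ring^\bullet X \to \ring^\bullet Y$ of cosimplicial pointed simplicial sets, where $(\ring^\bullet X)^n = \pz^{n+1} X$ is the Bousfield--Kan cosimplicial resolution. The coface and codegeneracy operators of $\ring^\bullet X$ come from the unit $\eta \colon \mathrm{id} \to \pz$ and multiplication $\mu \colon \pz\pz \to \pz$ of the $\pz$-monad; both are natural transformations, so functoriality of $\pz^n$ applied to $\tilde f$ automatically produces commutative squares with every higher coface $d^i$ for $i \ge 1$ and every codegeneracy $s^j$. The only delicate compatibility is with the bottom coface $d^0 \colon \pz X \to \pz\pz X$, since it is the one that ``sees'' the passage from a simplicial set to its linearization. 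This is precisely what Proposition~\ref{prop:hurewicz-commutes} supplies, applied iteratively to the Hurewicz embeddings $h_{\pz^k X}$.

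Then I would take totalization to obtain
\[
f_\infty := \mathrm{Tot}(\ring^\bullet \tilde f) \colon \ring_\infty X \to \ring_\infty Y,
\]
using that $\mathrm{Tot}$ is functorial on cosimplicial pointed simplicial sets. The commutativity of the bottom square in diagram~\ref{eq:hurewicz-main-dia}, namely $q_Y \circ f_\infty = \tilde f \circ q_X$, follows at once from the naturality of the canonical projection $q_X \colon \ring_\infty X \to (\ring^\bullet X)^0 = \pz X$ to the 0-coskeleton. The maps $\phi_X$ and $\phi_Y$ are the canonical Bousfield--Kan maps; since the diagram contains no direct arrow between $X$ and $Y$, no additional commutativity condition is imposed at the top of the picture.

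The hard part is the second step: showing that $\tilde f$, which is induced only by a chain map and \emph{not} by a simplicial map $X \to Y$, still lifts to a morphism of cosimplicial resolutions. Ordinary functoriality handles all levels above the ground floor, and for the ground-floor coface $d^0$ the required compatibility is exactly the content of Proposition~\ref{prop:hurewicz-commutes}. Once this one square is in hand, every higher compatibility falls out of naturality of $\eta$ and $\mu$, so the construction of $f_\infty$ and the commutativity assertion both reduce to organizing these naturality squares into a cosimplicial map before totalizing.
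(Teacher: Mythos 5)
Your proposal follows essentially the same route as the paper: build the level maps $\pz^{n-1}\pgam f$ from proposition~\ref{prop:chain-maps-induce-zs}, observe that all but one family of coface (and all codegeneracy) compatibilities are automatic from naturality of the unit and multiplication of the $\pz$-monad, use proposition~\ref{prop:hurewicz-commutes} for the one remaining family, and totalize; the commutativity of the bottom square then comes from the projection to codegree~$0$, exactly as in the paper (the paper indexes the delicate coface as $\delta^{n+1}$ rather than $d^{0}$, but that is only a labelling convention).

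The one point to repair is your justification of the delicate coface at higher cosimplicial levels: you say proposition~\ref{prop:hurewicz-commutes} is ``applied iteratively to the Hurewicz embeddings $h_{\pz^{k}X}$.'' Taken literally this does not work: the spaces $\pz^{k}X$ for $k\ge1$ are not known to be degeneracy-free, and $N(\pz^{k-1}\pgam f)$ is not given as a morphism of Steenrod coalgebras, so the proposition's hypotheses fail there --- and the squares involving $h_{\pz^{k}X}$ with $k\ge1$ are precisely the ones you have already disposed of by plain naturality, since the level maps are honest simplicial (indeed abelian-group) maps. What is needed instead is a \emph{single} application of proposition~\ref{prop:hurewicz-commutes} to $X$ and $Y$, converted to a square of simplicial abelian groups by applying $\pgam$ (Dold--Kan, proposition~\ref{prop:dold-kan}), and then the functor $\pz^{n}$ applied to that one square to obtain the compatibility with the coface built from $h_{X}$ at every level; this is how the paper argues via its diagram~\ref{eq:hurewicz2-commute}. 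With that substitution your argument is the paper's proof.
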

\begin{rem*}
The vertical composites are just the respective\emph{ Hurewicz maps.}\end{rem*}
\begin{proof}
Proposition~\ref{prop:chain-maps-induce-zs} implies that the chain-map,
$f$ induces morphisms of simplicial abelian groups
\begin{equation}
\pz^{i-1}\pgam f:\pz^{i}X\to\pz^{i}Y\label{eq:pz-i}
\end{equation}
 for all $i>0$. The fact that $f$ preserves Steenrod diagonals and
proposition~\ref{prop:hurewicz-commutes} implies that the diagram
\begin{equation}
\xymatrix{{N(X)\otimes\ring}\ar[r]^{f}\ar[d]_{N(h_{X})} & {N(Y)\otimes\ring}\ar[d]^{N(h_{Y})\otimes1}\\
{N(\pz X)\otimes\ring}\ar[r]_{N(\{\pgam f\})} & {N(\pz Y)\otimes\ring}
}
\label{eq:initial-hurewicz-commute}
\end{equation}
 commutes, where $h_{X}$ and $h_{Y}$ are Hurewicz maps. If we take
$\pgam*$ of this diagram (\ref{eq:initial-hurewicz-commute}), proposition~\ref{prop:dold-kan}
implies that we get a commutative diagram of simplicial abelian groups
\begin{equation}
\xymatrix{{\pz X}\ar[r]^{\pgam f}\ar[d]_{\pz h_{X}} & {\pz Y}\ar[d]^{\pz h_{Y}}\\
{\pz^{2}X}\ar[r]_{\pz\pgam f} & {\pz^{2}Y}
}
\label{eq:hurewicz2-commute}
\end{equation}
 Now recall the cosimplicial resolutions $\ring^{\bullet}X$ and $\ring^{\bullet}Y$
defined in example~4.1 of \cite[chapter~VII, section~4]{goerss-jardine}.
They have levels 
\[
(\ring^{\bullet}X)^{n}=\pz^{n+1}X
\]
$n\ge0$, with coface maps
\begin{align*}
\delta_{X}^{i}=\pz^{i}h\pz^{n-i+1}: & \pz^{n+1}X\to\pz^{n+2}X\\
\delta_{Y}^{i}=\pz^{i}h\pz^{n-i+1}: & \pz^{n+1}Y\to\pz^{n+2}Y
\end{align*}
for $i=0,\dots n+1$, where $h:*\to\pz*$ is the Hurewicz map of the
space to its right. In addition, they have codegeneracy maps
\begin{align*}
s_{X}^{i}=\pz^{i}\gamma\pz^{n-i} & :\pz^{n+2}X\to\pz^{n+1}X\\
s_{Y}^{i}=\pz^{i}\gamma\pz^{n-i} & :\pz^{n+2}Y\to\pz^{n+1}Y
\end{align*}
for $i=0,\dots,n$, where $\gamma(\alpha\cdot\beta)=\alpha\beta$
for $\alpha,\beta\in\ring$. If $0\le i<n+1$ the diagram\foreignlanguage{english}{{]}\textasciicircum{}\{F\_\{Y\}\}}
\[
\xyC{50pt}\xymatrix{{\pz^{n-i+1}X}\ar[r]^{\pz^{n-i}\pgam f}\ar[d]_{h_{X}} & {\pz^{n-i+1}Y}\ar[d]^{h_{Y}}\\
{\pz^{n-i+2}X}\ar[r]_{\pz^{n-i+1}\pgam f} & {\pz^{n-i+2}Y}
}
\]
 commutes by the naturality of Hurewicz maps. Composing this with
$\pz^{i}$ shows that the maps $\pz^{n}\pgam f$ preserve cofaces
$\delta_{*}^{i}$ for $i<n+1$. The \emph{only} coface that uses the
topology of $X$ and $Y$ --- beyond their bare chain-complexes ---
is (remarkably!) $\delta_{*}^{n+1}$. Applying $\pz^{n}$ to diagram~\ref{eq:hurewicz2-commute}
implies that this is \emph{also} preserved. It follows that the maps
in equation~\ref{eq:pz-i} commute with all cofaces and codegeneracies
so that they define a morphism of cosimplicial spaces
\[
\ring^{\bullet}f:\ring^{\bullet}X\to\ring^{\bullet}Y
\]
that induces a morphism $f_{\infty}$ of total spaces that makes diagram~\ref{eq:hurewicz-main-dia}
commute.\end{proof}
\begin{prop}
Under the hypotheses of corollary~\ref{cor:main-hurewicz}, if $f$
is surjective, $\ring_{\infty}f$ is a fibration. If $f$ is a surjective
homology equivalence, then $\ring_{\infty}f$ is a trivial fibration. \end{prop}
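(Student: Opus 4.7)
The plan is to exploit the construction of $f_{\infty}$ in the proof of theorem~\ref{cor:main-hurewicz}, where $f_{\infty}=\mathrm{Tot}(\ring^{\bullet}f)$ for a cosimplicial map $\ring^{\bullet}f\colon \ring^{\bullet}X\to\ring^{\bullet}Y$ whose level-$n$ component is the simplicial abelian group map $\pz^{n-1}\pgam f\colon\pz^{n}X\to\pz^{n}Y$ from proposition~\ref{prop:chain-maps-induce-zs}. The strategy is to verify that each such level is a (trivial) Kan fibration of simplicial abelian groups, and then to invoke the Reedy-fibrancy of the Bousfield--Kan cosimplicial resolutions to conclude that $\mathrm{Tot}$ delivers a (trivial) fibration.

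First I would check that surjectivity of $f$ propagates to every level. Because $f\colon N(X)\otimes\ring\to N(Y)\otimes\ring$ is a surjective chain map, the pointed Dold--Kan correspondence (proposition~\ref{prop:dold-kan} and definition~\ref{def:pgamma}) immediately produces a surjective morphism $\pgam f\colon\pz X\to\pz Y$ of simplicial abelian groups --- surjectivity is visible on generators in each simplicial degree. Since the $\ring$-free functor $\pz$ takes surjections to surjections, iteration shows that each $\pz^{n-1}\pgam f$ is surjective. Then I would invoke the classical fact that a surjection of simplicial abelian groups is a Kan fibration (a consequence of the Dold--Kan equivalence, since on the chain side every epimorphism is a fibration in the projective model structure).

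Next I would combine this with the standard fact that the Bousfield--Kan cosimplicial resolutions $\ring^{\bullet}X$ and $\ring^{\bullet}Y$ are Reedy fibrant (see \cite{bousfield-kan}; the matching objects are themselves built from simplicial abelian groups and the matching maps are surjections for the same reason). A level-wise Kan fibration between Reedy fibrant cosimplicial spaces is a Reedy fibration, and $\mathrm{Tot}$ is a right Quillen functor on the Reedy model structure, so $f_{\infty}=\mathrm{Tot}(\ring^{\bullet}f)$ is a Kan fibration. This handles the first assertion.

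For the trivial fibration statement, if $f$ is additionally a homology equivalence, then proposition~\ref{prop:dold-kan} together with the identity $\pi_{i}(A)=H_{i}\moore{A}$ for $A\in\sab$ implies that $\pgam f$ is a weak equivalence of simplicial abelian groups; and since $\pz$ preserves weak equivalences between $\ring$-free simplicial abelian groups (again by Dold--Kan applied levelwise), every $\pz^{n-1}\pgam f$ is both a Kan fibration and a weak equivalence. The right Quillen functor $\mathrm{Tot}$ carries such level-wise trivial fibrations between Reedy fibrant objects to trivial fibrations, giving the conclusion. The only delicate step is the Reedy fibrancy of $\ring^{\bullet}X$ and the fact that $\mathrm{Tot}$ preserves level-wise (trivial) fibrations between Reedy fibrant cosimplicial spaces; these I would cite from \cite{bousfield-kan} and \cite{goerss-jardine} rather than reprove.
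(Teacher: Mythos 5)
Your reduction to levelwise statements is fine as far as it goes: surjectivity of $f$ does propagate through $\pgam$ and the iterates of $\pz$, a surjective homomorphism of simplicial abelian groups is indeed a Kan fibration, and the levelwise weak-equivalence claim in your second half is correct. The gap is the bridging principle you invoke twice: that a level-wise (trivial) Kan fibration between Reedy fibrant cosimplicial spaces is automatically a Reedy (trivial) fibration, so that $\mathrm{Tot}$ can then be applied as a right Quillen functor. That is not a theorem you can cite from \cite{bousfield-kan} or \cite{goerss-jardine}, and it fails in general: a Reedy fibration requires each relative matching map $X^{n}\to Y^{n}\times_{M^{n}Y}M^{n}X$ to be a fibration, and Reedy fibrancy of the source and target together with levelwise fibrancy of the map does not give this. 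The same objection applies to your parenthetical justification that $\ring^{\bullet}X$ is Reedy fibrant ``because the matching maps are surjections for the same reason'': the matching objects are limits built out of the codegeneracies, and controlling them requires an actual structural argument, not levelwise surjectivity.

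What closes the gap --- and what the paper actually uses --- is precisely the extra structure you set aside. All codegeneracies, and all cofaces except the $0^{\text{th}}$, of $\ring^{\bullet}X$ and $\ring^{\bullet}Y$ are homomorphisms of simplicial abelian groups, so these cosimplicial spaces are \emph{group-like} in the sense of chapter~X, section~4 of \cite{bousfield-kan}; proposition~4.9 of that section is exactly the device that converts the levelwise surjection/fibration data into the statement that $\ring_{\infty}f$ is a fibration, the group structure being what makes the (relative) matching maps behave. For the trivial-fibration half the paper sidesteps the Reedy issue differently: once $\ring^{\bullet}f$ is a \emph{pointwise} trivial fibration, theorem~4.13 in chapter~VIII of \cite{bousfield-kan}, together with the cofibrancy of $\Delta^{\bullet}$ in $\coS$, gives that the induced map of total spaces is a trivial fibration. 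If you keep your (correct) levelwise verifications but replace your general Reedy claim with these two citations, the argument goes through.
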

\begin{proof}
All of the coface maps except for the $0^{\text{th}}$ in $\ints^{\bullet}X$
are morphisms of simplicial abelian groups. It follows that $\ring^{\bullet}X$
is ``group-like'' in the sense of section~4 in chapter~X of \cite{bousfield-kan}.
The conclusion follows from proposition~4.9 section~4 in chapter~X
of \cite{bousfield-kan}. 

If $f$ is also a homology equivalence, then $\ring^{\bullet}f:\ring^{\bullet}X\to\ring^{\bullet}Y$
is a pointwise trivial fibration. The final statement follows from
theorem~4.13 in chapter~VIII of \cite{bousfield-kan}, and the fact
that $\Delta^{\bullet}$ is cofibrant in $\coS$.
\end{proof}
If $\ring=\ints$ and spaces are \emph{nilpotent,} we can say a bit
more:
\begin{cor}
\label{cor:cfxy-y-nilpotent}Under the hypotheses of corollary~\ref{cor:main-hurewicz},
if $Y$ is also nilpotent and a Kan complex then $\phi_{Y}:Y\to\ints_{\infty}Y$
is a weak equivalence with a homotopy-inverse, $\phi':\ints_{\infty}Y\to Y$,
that fits into a commutative diagram 
\begin{equation}
\xymatrix{{X}\ar[d]_{\phi_{X}} & {Y}\ar[d]_{\phi_{Y}}\\
{\ints_{\infty}X}\ar[r]_{\ints_{\infty}f} & {\ints_{\infty}Y}\ar@/_{1pc}/[u]_{\phi'}
}
\label{eq:cfxy-y-nilpotent}
\end{equation}
where
\begin{enumerate}
\item $\phi_{Y}:Y\to\ints_{\infty}X$ is a weak equivalence
\item a morphism of cellular coalgebras, $f:\cf X\to\cf Y$, induces a map
of simplicial sets 
\[
X\xrightarrow{\phi_{X}}\ints_{\infty}X\xrightarrow{\ints_{\infty}f}\ints Y\xrightarrow{\phi'}Y
\]

\end{enumerate}
If $Y$ is not a Kan complex, then the diagram that results from applying
the topological realization functor, $|*|$, to all terms of diagram~\ref{eq:cfxy-y-nilpotent},
commutes.\end{cor}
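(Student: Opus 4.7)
The plan is to piece together the map $f_{\infty}$ constructed in Theorem~\ref{cor:main-hurewicz} with a homotopy-inverse to the canonical completion map $\phi_{Y}$, invoking the standard Bousfield--Kan completion theorems for nilpotent spaces. First, I would recall from \cite[chapter~V]{bousfield-kan} that if $Y$ is $\ints$-nilpotent, the canonical map $\phi_{Y}\colon Y\to\ints_{\infty}Y$ is a weak equivalence; this is exactly where the nilpotency and $\ring=\ints$ hypotheses enter. Since by hypothesis $Y$ is also a Kan complex, and since $\ints_{\infty}Y=\mathrm{Tot}(\ints^{\bullet}Y)$ is automatically a Kan complex (it is the total space of a fibrant cosimplicial simplicial set), a weak equivalence between them admits a simplicial homotopy-inverse $\phi'\colon\ints_{\infty}Y\to Y$ by the Whitehead theorem for Kan complexes.

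Next I would define the desired map $X\to Y$ as the composite
\[
X\xrightarrow{\phi_{X}}\ints_{\infty}X\xrightarrow{\ints_{\infty}f}\ints_{\infty}Y\xrightarrow{\phi'}Y,
\]
where $\ints_{\infty}f=f_{\infty}$ is the map produced by Theorem~\ref{cor:main-hurewicz}. The commutativity of diagram~\ref{eq:cfxy-y-nilpotent} is essentially tautological: the left-hand square is the commutative bottom half of diagram~\ref{eq:hurewicz-main-dia} in Theorem~\ref{cor:main-hurewicz} (restricted to the completion), and the triangle involving $\phi'$ commutes by construction since $\phi'\circ\phi_{Y}\simeq 1_{Y}$.

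For the non-Kan case I would apply the topological realization functor $|\cdot|$ throughout. Here two facts conspire: the realization of any simplicial set is a CW complex (hence the realization of $Y$ is a Kan complex in the sense that Whitehead's theorem applies up to weak equivalence in $\mathbf{Top}$), and geometric realization preserves weak equivalences and commutes up to homotopy with the constructions of Bousfield--Kan in the nilpotent setting. Thus $|\phi_{Y}|\colon|Y|\to|\ints_{\infty}Y|$ is a weak equivalence of CW complexes, admits a homotopy-inverse $|\phi'|$, and the resulting diagram of realizations commutes (on the nose for the square, up to homotopy for the triangle, which is all that is asserted).

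The main obstacle is verifying that $\phi'$ can be chosen so that the triangle in diagram~\ref{eq:cfxy-y-nilpotent} commutes strictly rather than merely up to homotopy; this is only claimed for the Kan case, and is handled by choosing $\phi'$ to be any simplicial homotopy-inverse guaranteed by model-category theory in $\mathbf{S}$ (both sides being fibrant). The non-Kan case sidesteps this difficulty by passing to $|\cdot|$, where the model structure on $\mathbf{Top}$ makes the existence of the required homotopy inverse automatic, at the cost of only obtaining commutativity after realization.
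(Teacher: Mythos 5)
Your proposal is correct and follows essentially the same route as the paper, whose entire proof is a one-line citation of proposition~3.5 in chapter~V of Bousfield--Kan for the statement that $\phi_{Y}$ is a weak equivalence, with the remaining points (fibrancy of $\ints_{\infty}Y$, Whitehead's theorem for Kan complexes, and passage to topological realizations in the non-Kan case) left as routine. You simply spell out those routine steps explicitly, which matches the intended argument.
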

\begin{rem*}
For instance, singular simplicial sets are always Kan complexes.\end{rem*}
\begin{proof}
The main statement (that $\phi_{Y}$ is a weak equivalence) follows
from proposition~3.5 in chapter~V of \cite{bousfield-kan}.
\end{proof}
Our final result is:
\begin{cor}
\label{cor:final-result}If $X$ and $Y$ are pointed, reduced, nilpotent
degeneracy-free simplicial sets that are Kan complexes, with normalized
chain-complexes $N(X)$ and $N(Y)$, respectively, then $X$ and $Y$
are homotopy equivalent if and only if there exists a morphism of
Steenrod coalgebras 
\[
f:N(X)\to N(Y)
\]
inducing isomorphisms in homology over $\ints$. If $X$ and $Y$
are not Kan complexes, the corresponding statement holds for their
topological realizations.\end{cor}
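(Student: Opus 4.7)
The plan is to derive this from theorem~\ref{cor:main-hurewicz} together with corollary~\ref{cor:cfxy-y-nilpotent}. The ``only if'' direction is the easier one: between Kan complexes, a homotopy equivalence is realized by a simplicial weak equivalence $g: X \to Y$, and applying the functor $\cf*$ of proposition~\ref{prop:cf-intro} produces a morphism of Steenrod coalgebras $N(g): N(X) \to N(Y)$ which is an integral homology isomorphism. For non-Kan $X, Y$, the same argument applies after replacing them by their singular simplicial complexes, which are Kan and have the same realizations.

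For the ``if'' direction, suppose $f: N(X) \to N(Y)$ is a morphism of Steenrod coalgebras inducing isomorphisms on integral homology. Theorem~\ref{cor:main-hurewicz} (with $\ring = \ints$) yields a simplicial map $f_\infty: \ints_\infty X \to \ints_\infty Y$ fitting in diagram~\ref{eq:hurewicz-main-dia}. The essential step is to show that $f_\infty$ is a weak equivalence. Here $f_\infty$ is the totalization of the cosimplicial map $\ints^\bullet f$, whose level-$n$ piece is $\pz^{n}\,\pgam f: \pz^{n+1}X \to \pz^{n+1}Y$. Since $\pgam$ is the simplicial-abelian-group half of the Dold-Kan equivalence (proposition~\ref{prop:dold-kan}), the homology isomorphism $f$ becomes a weak equivalence $\pgam f$ of simplicial abelian groups. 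The free $\ints$-module functor $\pz$ takes weak equivalences of pointed reduced simplicial sets to weak equivalences (since $\pi_n(\pz(-)) \cong \tilde H_n(-;\ints)$), so $\ints^\bullet f$ is a levelwise weak equivalence.

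Because $\pz^{n+1}X$ and $\pz^{n+1}Y$ are simplicial abelian groups, the cosimplicial objects $\ints^\bullet X$ and $\ints^\bullet Y$ are group-like in the sense of \cite[ch.~X, section~4]{bousfield-kan} and therefore fibrant. It follows that $\mathrm{Tot}$ carries the pointwise weak equivalence $\ints^\bullet f$ to a weak equivalence, so $f_\infty$ is a weak equivalence. Finally, corollary~\ref{cor:cfxy-y-nilpotent} gives that the canonical maps $\phi_X: X \to \ints_\infty X$ and $\phi_Y: Y \to \ints_\infty Y$ are weak equivalences, and the zig-zag
\[
X \xrightarrow{\phi_X} \ints_\infty X \xrightarrow{f_\infty} \ints_\infty Y \xleftarrow{\phi_Y} Y
\]
yields $X \simeq Y$ in the homotopy category. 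For non-Kan $X, Y$, the same conclusion holds after applying the topological realization functor, exactly as in corollary~\ref{cor:cfxy-y-nilpotent}.

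The main obstacle I anticipate is justifying the weak-equivalence status of $f_\infty$ without the surjectivity hypothesis that powered the preceding proposition (which produced a trivial fibration from a \emph{surjective} homology equivalence). The argument above bypasses that requirement by operating at the level of weak equivalences of the cosimplicial resolution and appealing to the group-like (hence fibrant) structure of $\ints^\bullet X$ and $\ints^\bullet Y$, which needs no assumption on $f$ beyond being a homology equivalence.
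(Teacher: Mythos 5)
Your proof is correct, but the decisive step is argued differently from the paper. The paper's own proof never shows directly that $f_{\infty}$ is a weak equivalence of total spaces: it chases diagram~\ref{eq:hurewicz-main-dia-1}, using that $\phi_{X},\phi_{Y}$ are weak equivalences (nilpotence, via proposition~3.5 of \cite[ch.~V]{bousfield-kan}), that $q_{X},q_{Y}$ are homotopic to Hurewicz maps, and that $\pgam f$ is a weak equivalence, to conclude that $f_{\infty}$ induces isomorphisms on integral homology; it then finishes with Whitehead's theorem applied to the nilpotent spaces involved and takes $\phi_{Y}^{-1}\circ f_{\infty}\circ\phi_{X}$. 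You instead prove that $f_{\infty}=\mathrm{Tot}(\ints^{\bullet}f)$ is a weak equivalence outright: $\pgam f$ is a weak equivalence because $f$ is a homology isomorphism, $\pz$ preserves weak equivalences since $\pi_{n}(\pz(-))\cong\tilde{H}_{n}(-;\ints)$, so $\ints^{\bullet}f$ is a levelwise weak equivalence between group-like, hence fibrant, cosimplicial spaces, and $\mathrm{Tot}$ (a right Quillen functor) preserves weak equivalences between fibrant objects. This buys something the paper's route does not: $f_{\infty}$ is a weak equivalence with no nilpotence and no surjectivity hypothesis (nilpotence enters only to make $\phi_{X},\phi_{Y}$ weak equivalences), whereas the paper's preceding proposition needed surjectivity to get a (trivial) fibration and its corollary argument needs the homology form of Whitehead's theorem for nilpotent spaces. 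Two small points to tighten: group-likeness is not a consequence of the levels merely being simplicial abelian groups --- one must check that all cofaces except $\delta^{0}$ and all codegeneracies are homomorphisms, which is exactly what the paper verifies in the proposition following theorem~\ref{cor:main-hurewicz}, so you should cite that rather than assert it from the levels alone; and in the ``only if'' direction for non-Kan $X,Y$, replacing them by singular complexes changes $N(X)$ and $N(Y)$, so a homotopy equivalence of realizations does not literally produce a morphism $N(X)\to N(Y)$ --- though the paper's own proof is equally silent on this direction.
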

\begin{proof}
Any homotopy equivalence $g:X\to Y$ induces a map like $f$ in the
statement. Conversely, given $f$ as above, we get
\begin{equation}
\xymatrix{{X}\ar[d]_{\phi_{X}} & {Y}\ar[d]^{\phi_{Y}}\\
{\ints_{\infty}X}\ar[r]^{f_{\infty}}\ar[d]_{q_{X}} & {\ints_{\infty}Y}\ar[d]^{q_{Y}}\\
{\tilde{\ints}X}\ar[r]_{\pgam f} & {\tilde{\ints}Y}
}
\label{eq:hurewicz-main-dia-1}
\end{equation}
where $\phi_{X}$ and $\phi_{Y}$ are weak equivalences and the map
at the bottom is a weak equivalence. Since $\phi_{X}$ and $\phi_{Y}$
are weak equivalences, it follows that the maps $q_{X}$ and $q_{Y}$
are homotopic to the Hurewicz maps of $\ints_{\infty}X$ and $\ints_{\infty}Y$,
respectively. Since $f$ is a weak equivalence, it follows that $f_{\infty}$
induces isomorphisms in homology (recall that $\pi_{n}(\tilde{\ints}X)\cong H_{n}(X)=H_{n}(\ints_{\infty}X)$
for all $n\ge0$, and that a corresponding statement holds for $Y$).
Whitehead's theorem implies the existence of a homotopy inverse for
$\phi_{Y}$ and 
\[
\phi_{Y}^{-1}\circ f_{\infty}\circ\phi_{X}:X\to Y
\]
 is a homotopy equivalence.
\end{proof}
Since arbitrary simplicial sets are homotopy equivalent to degeneracy-free
ones, we also get
\begin{cor}
\label{cor:statement-for-simplicial-sets}If $X$ and $Y$ are pointed
reduced simplicial sets, $\ring$ is a ring satisfying remark~\ref{assu:rdef},
then any morphism of Steenrod coalgebras (over unnormalized chain-complexes)
\[
f:C(X)\otimes\ring\to C(Y)\otimes\ring
\]
 induces a commutative diagram
\[
\xymatrix{{X} & {Y}\\
\ds\circ\sd(X)\ar[d]_{\phi_{(\ds\circ\sd(X))}}\ar[u]^{g_{X}} & \ds\circ\sd(Y)\ar[d]^{\phi_{(\ds\circ\sd(Y))}}\ar[u]_{g_{Y}}\\
{\ring_{\infty}(\ds\circ\sd(X))}\ar[r]^{f_{\infty}}\ar[d]_{q_{(\ds\circ\sd(X))}} & {\ring_{\infty}(\ds\circ\sd(Y))}\ar[d]^{q_{(\ds\circ\sd(Y))}}\\
{\pz(\ds\circ\sd(X))}\ar[r]_{\pgam f} & {\pz(\ds\circ\sd(Y))}
}
\]
where;
\begin{enumerate}
\item the functors $\sd$ and $\ds$ are defined in definition~\ref{def:sd-ds-functors}
\item the maps $g_{X}$ and $g_{Y}$ are defined in equation~\ref{eq:ds-sd-unit}
and are homotopy equivalences, by proposition~\ref{prop:homotopy-equiv-sd-ds}.
\end{enumerate}
In particular, if $X$ and $Y$ are nilpotent, $\ring=\ints$, and
$f$ is a homology equivalence, then the topological realizations,
$|X|$ and $|Y|$, are homotopy equivalent.\end{cor}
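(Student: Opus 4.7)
The plan is to reduce the general case to Theorem~\ref{cor:main-hurewicz} by replacing $X$ and $Y$ with their degeneracy-free models $\ds\circ\sd(X)$ and $\ds\circ\sd(Y)$. By construction (definition~\ref{def:sd-ds-functors}) these simplicial sets are degeneracy-free, and proposition~\ref{prop:homotopy-equiv-sd-ds} supplies the natural maps $g_X$ and $g_Y$, which are honest homotopy equivalences when $X$ and $Y$ are Kan complexes and become equivalences of topological realizations otherwise.

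The first technical step is to reinterpret the given morphism $f\colon C(X)\otimes\ring\to C(Y)\otimes\ring$ of Steenrod coalgebras as a morphism of Steenrod coalgebras $\tilde f\colon N(\ds\circ\sd(X))\otimes\ring\to N(\ds\circ\sd(Y))\otimes\ring$ of \emph{normalized} chain-complexes. The key identification comes from lemma~\ref{lem:degeneracy-free-inclusion}, which asserts $N(\sd(X))=C(X)$ (and similarly for $Y$) as Steenrod coalgebras, and from the naturality of the functorial Steenrod structure in proposition~\ref{prop:cf-intro} applied to $\ds\circ\sd$. Thus $f$ is literally the same data as a Steenrod coalgebra morphism between the normalized chains of the degeneracy-free replacements.

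With $\tilde f$ in hand, Theorem~\ref{cor:main-hurewicz} applied to the pointed reduced degeneracy-free simplicial sets $\ds\circ\sd(X)$ and $\ds\circ\sd(Y)$ produces $f_\infty\colon\ring_\infty(\ds\circ\sd(X))\to\ring_\infty(\ds\circ\sd(Y))$ together with the commutative square involving $\phi_{(\ds\circ\sd(X))}$, $\phi_{(\ds\circ\sd(Y))}$, $q_{(\ds\circ\sd(X))}$, $q_{(\ds\circ\sd(Y))}$, and $\pgam f$. Splicing the homotopy equivalences $g_X$ and $g_Y$ from proposition~\ref{prop:homotopy-equiv-sd-ds} on top yields the full diagram in the statement.

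For the final claim, assume $\ring=\ints$, the spaces are nilpotent, and $f$ is a homology equivalence. Nilpotence transfers across the homotopy equivalences $g_X$, $g_Y$, so the replacements are nilpotent as well. The proposition following Theorem~\ref{cor:main-hurewicz} shows $f_\infty$ is a trivial fibration (after replacing $f$ by a surjective homology equivalence if necessary, via a standard mapping-cylinder argument), and the Bousfield--Kan weak equivalence $\phi_{(\ds\circ\sd(Y))}\colon\ds\circ\sd(Y)\to\ints_\infty(\ds\circ\sd(Y))$ from proposition~3.5 of chapter~V of \cite{bousfield-kan} is available. Composing $g_Y$ with a homotopy inverse of $\phi_{(\ds\circ\sd(Y))}$ (after topological realization if the spaces are not Kan), precomposed with $f_\infty\circ\phi_{(\ds\circ\sd(X))}\circ g_X^{-1}$, produces the desired homotopy equivalence $|X|\simeq|Y|$ by Whitehead's theorem, exactly as in the proof of corollary~\ref{cor:final-result}. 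The main obstacle is the first step: justifying that the unnormalized-chain data of $f$ actually yields a Steenrod coalgebra morphism on the \emph{normalized} chains of $\ds\circ\sd$, which requires both the coalgebraic identification $C(X)=N(\sd(X))$ and the compatibility of $\ds$ with this structure.
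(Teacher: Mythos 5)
Your proposal takes essentially the same route as the paper: the paper's own proof is just the observation that the statement follows from Theorem~\ref{cor:main-hurewicz} together with Proposition~\ref{prop:cx-is-nfx} (which gives $C(X)=N(\ds\circ\sd(X))$, with the functorial Steenrod structures agreeing), i.e.\ exactly the reduction to the degeneracy-free replacements $\ds\circ\sd(X)$, $\ds\circ\sd(Y)$ and the splicing-in of $g_{X}$, $g_{Y}$ that you carry out. The only superfluous element is your surjective-replacement/mapping-cylinder aside for the final claim: as in Corollary~\ref{cor:final-result}, which you also invoke, no surjectivity is needed there --- nilpotence makes the maps $\phi$ weak equivalences, $f_{\infty}$ is a homology isomorphism, and Whitehead's theorem finishes the argument.
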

\begin{proof}
This follows immediately from corollary~\ref{cor:main-hurewicz}
and proposition~\ref{prop:cx-is-nfx}, which implies that $N(\ds\circ\sd(X))=C(X)$.
\end{proof}
\appendix

\section{Simplicial sets and Delta-complexes\label{sec:Simplicial-sets-and}}

We recount results of Rourke and Sanderson (see\cite{rourke-sanderson-delta-complex})
involving variations on the concept of simplicial set.
\begin{defn}
\label{def:delta-complexes}Let $\ords$ be the ordinal number category
whose morphisms are order-preserving monomorphisms between them. The
objects of $\ords$ are elements $\mathbf{n}=\{0\to1\to\cdots\to n\}$
and a morphism 
\[
\theta:\mathbf{m}\to\mathbf{n}
\]
 is a strict order-preserving map ($i<k\implies\theta(i)<\theta(j)$).
Then the category of \emph{delta-complexes,} $\dcat$, has objects
that are contravariant functors
\[
\ords\to\mathbf{Set}
\]
to the category of sets. The chain complex of a delta-complex, $X$,
will be denoted $N(X)$.\end{defn}
\begin{rem*}
In other words, delta-complexes are just simplicial sets \emph{without
degeneracy-operators.} The topological realization of a delta-complex
is the result of gluing together simplices via the face-maps.

A simplicial set gives rise to a delta-complex by ``forgetting''
its degeneracies --- ``promoting'' its degenerate simplices to nondegenerate
status. The topological realization of the resulting delta-complex
is vastly ``larger'' than that of the original simplicial set. 

Conversely, a delta-complex can be converted into a simplicial set
by equipping it with degenerate simplices in a mechanical fashion.
These operations define functors:\end{rem*}
\begin{defn}
\label{def:sd-ds-functors}The functor
\[
\sd:\ss\to\dcat
\]
is defined to simply drop degeneracy operators (degenerate simplices
become nondegenerate) while retaining the face-operators. The functor
\[
\ds:\dcat\to\ss
\]
equips a delta complex, $X$, with degenerate simplicies and operators
via
\begin{equation}
\ds(X)_{m}=\bigsqcup_{\mathbf{m}\twoheadrightarrow\mathbf{n}}X_{n}\label{eq:ds-functor}
\end{equation}
for all $m>n\ge0$.\end{defn}
\begin{rem*}
The functors $\sd$ and $\ds$ were denoted $F$ and $G$, respectively,
in \cite{rourke-sanderson-delta-complex}. Equation~\ref{eq:ds-functor}
simply states that we add all possible degeneracies of simplices in
$X$ subject \emph{only} to the basic identities that face- and degeneracy-operators
must satisfy. 

Although $\sd$ promotes degenerate simplicies to nondegenerate ones,
Rourke and Sanderson's paper, \cite{rourke-sanderson-delta-complex},
shows that these new nondegenerate simplices can be collapsed without
changing the homotopy type of the complex: although the degeneracy
operators are no longer built in to the delta-complex, they still
define contracting homotopies.
\end{rem*}
The definition immediately implies that
\begin{prop}
\label{prop:cx-is-nfx}If $X$ is a simplicial set and $Y$ is a delta-complex,
$C(X)=N(\sd(X))$, $N(\ds(Y))=N(Y)$, and $C(X)=N(\ds\circ\sd(X))$.\end{prop}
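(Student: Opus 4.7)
The three identities are essentially bookkeeping: each side is a chain complex whose underlying graded abelian group is free on an explicitly described set of simplices, with boundary an alternating sum of face operators, so the plan is simply to unpack the definitions of $\sd$, $\ds$, $C(\cdot)$, and $N(\cdot)$ and match the data on both sides.

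For the first identity $C(X)=N(\sd(X))$, I would observe that, by definition, $\sd(X)$ has the same underlying sets of simplices as $X$ in each dimension and retains the same face-operators; only the degeneracy-operators are discarded. Hence $\sd(X)_n = X_n$ as a set in dimension $n$. Since a delta-complex has no degeneracies, $N$ applied to a delta-complex performs no quotienting: $N(\sd(X))_n$ is the free abelian group on $\sd(X)_n = X_n$, with boundary $\sum_{i=0}^n (-1)^i d_i$. This is term-for-term the unnormalized chain complex $C(X)$.

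For the second identity $N(\ds(Y)) = N(Y)$, I would use the explicit formula $\ds(Y)_m = \bigsqcup_{\mathbf{m}\twoheadrightarrow\mathbf{n}} Y_n$. The coproduct summand indexed by the identity surjection $\mathbf{m}\twoheadrightarrow\mathbf{m}$ consists of the ``original'' simplices of $Y$ in dimension $m$; every other summand consists of formal degeneracies of lower-dimensional simplices of $Y$. Passing to the normalized complex quotients out exactly the subcomplex generated by those formal degeneracies, leaving in dimension $m$ the free abelian group on $Y_m$. The only point that requires verification is that the boundary operator matches: the simplicial identities, imposed by the construction of $\ds$ from the sole requirement that face- and degeneracy-operators satisfy their defining relations, force the faces of a simplex of $Y_m\subset\ds(Y)_m$ to be either faces in $Y$ or degenerate simplices in $\ds(Y)$. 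Taking the alternating sum and then reducing mod degenerates recovers the boundary of $N(Y)$.

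The third identity $C(X) = N(\ds\circ\sd(X))$ is then immediate: applying the second identity to the delta-complex $Y := \sd(X)$ gives $N(\ds(\sd(X))) = N(\sd(X))$, and the first identity gives $N(\sd(X)) = C(X)$. The only mild technicality in the whole proposition, and therefore the step where I would be most careful, is the boundary verification in the second identity, since it is the unique place where the simplicial identities built into $\ds$ actually do work; once one checks that the $d_i$'s of a ``genuine'' simplex of $Y$ regarded inside $\ds(Y)$ really are either genuine faces or recognized-as-degenerate, everything else is a direct comparison of generators.
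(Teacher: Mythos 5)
Your argument is correct and is exactly the unpacking of definitions that the paper has in mind: the paper offers no written proof at all, simply asserting that the proposition follows immediately from the definitions of $\sd$, $\ds$, $C(\cdot)$, and $N(\cdot)$. Your extra care with the boundary operator in the second identity is a reasonable (and harmless) elaboration of the same routine verification.
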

\begin{defn}
\label{def:degeneracy-free}A simplicial set, $X$, is defined to
be \emph{degeneracy-free} if 
\[
X=\ds(Y)
\]
 for some delta-complex, $Y$.\end{defn}
\begin{rem*}
Compare definition~1.10 in chapter VII of \cite{goerss-jardine}\footnote{ Their definition has a typo, stating that $\ords$ consists of \emph{surjections}
rather than \emph{injections}.}). In a manner of speaking, $X$ is freely generated by the degeneracy
operators acting on a basis consisting of the simplices of $Y$. Lemma~1.2
in chapter~VII of \cite{goerss-jardine} describes other properties
of degeneracy-free simplicial sets (hence of the functor $\ds$).
\end{rem*}
In \cite{rourke-sanderson-delta-complex}, Rourke and Sanderson also
showed that one could give a ``somewhat more intrinsic'' definition
of degeneracy-freeness:
\begin{prop}
\label{prop:intrinsic-degenracy-free}If $X$ is a simplicial set,
let $\mathrm{Core}(X)$ consist of the nondegenerate simplices and
their faces. This is a delta-complex and there exists a canonical
map
\[
c:\ds(\mathrm{Core}(X))\to X
\]
sending simplices of $\mathrm{Core}(X)$ to themselves in $X$ and
degeneracies to suitable degeneracies of them. Then $X$ is degeneracy-free
if and only if $c$ is an isomorphism.
\end{prop}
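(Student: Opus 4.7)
The plan is to prove the two directions separately, with the ``if'' direction essentially tautological and the ``only if'' direction hinging on the identification $\mathrm{Core}(\ds(Y))=Y$.

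For the easy direction, if $c:\ds(\mathrm{Core}(X))\to X$ is an isomorphism, then $X$ is isomorphic to $\ds(Y)$ for $Y=\mathrm{Core}(X)$ (a delta-complex by hypothesis), so $X$ is degeneracy-free by definition~\ref{def:degeneracy-free}.

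For the converse, suppose $X=\ds(Y)$ for some delta-complex $Y$. Using equation~\ref{eq:ds-functor}, I would describe a simplex of $\ds(Y)$ in dimension $m$ as a pair $(\theta,y)$ with $\theta:\mathbf{m}\twoheadrightarrow\mathbf{n}$ a surjection in $\mathbf{\Delta}$ (including the identity when $m=n$) and $y\in Y_{n}$. The simplicial operators are determined by epi--mono factorization in $\mathbf{\Delta}$: for any $\phi:\mathbf{k}\to\mathbf{m}$, one factors $\theta\circ\phi$ uniquely as $\delta\circ\tau$ with $\tau:\mathbf{k}\twoheadrightarrow\mathbf{l}$ surjective and $\delta:\mathbf{l}\hookrightarrow\mathbf{n}$ injective, and sets $\phi^{*}(\theta,y)=(\tau,\delta^{*}y)$, using the delta-complex structure of $Y$ to evaluate $\delta^{*}y$. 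Three observations follow: (i)~a simplex $(\theta,y)$ is nondegenerate if and only if $\theta$ is the identity, so the nondegenerate simplices of $\ds(Y)$ biject with $Y$ via $y\mapsto(\mathrm{id}_{\mathbf{n}},y)$; (ii)~a face operator $d_{i}$ applied to $(\mathrm{id}_{\mathbf{n}},y)$ yields $(\mathrm{id},d_{i}^{Y}y)$, which is again nondegenerate because $Y$ is already closed under faces as a delta-complex; and (iii)~combining these, $\mathrm{Core}(\ds(Y))$ coincides with $Y$ as a delta-complex. With this identification, $c:\ds(Y)=\ds(\mathrm{Core}(X))\to X=\ds(Y)$ is manifestly the identity, and in particular an isomorphism.

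The principal obstacle is the combinatorial verification in~(ii): that faces of nondegenerate simplices in $\ds(Y)$ remain nondegenerate. Once this is pinned down by the epi--mono factorization calculation (where $\theta=\mathrm{id}$ together with $\phi$ injective forces the surjection component $\tau$ of $\theta\circ\phi=\phi$ to equal the identity), the rest of the argument is formal bookkeeping about how $\mathrm{Core}$ interacts with $\ds$.
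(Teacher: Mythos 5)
Your proof is correct. Note, however, that the paper itself offers no proof of this proposition: it is attributed to Rourke and Sanderson's paper on $\Delta$-sets, so there is no internal argument to compare against. Your route is the standard one and is sound: describing $\ds(Y)_{m}$ as pairs $(\theta,y)$ with $\theta:\mathbf{m}\twoheadrightarrow\mathbf{n}$ and $y\in Y_{n}$ (this is the precise form of equation~\ref{eq:ds-functor}, i.e.\ the left Kan extension along $\ords\hookrightarrow\mathbf{\Delta}$), and using unique epi--mono factorization to compute the simplicial operators, correctly yields that the degeneracies act by $s_{i}(\theta,y)=(\theta\sigma_{i},y)$, hence that $(\theta,y)$ is nondegenerate exactly when $\theta=\mathrm{id}$, and that faces of such simplices stay nondegenerate, so $\mathrm{Core}(\ds(Y))=Y$; combined with the tautological converse this gives the equivalence with definition~\ref{def:degeneracy-free}. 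Two small points are worth making explicit rather than leaving as ``manifest'': first, that the canonical map $c$ sends the formal degeneracy $(\theta,(\mathrm{id},y))$ to the actual degeneracy $(\theta,y)$ in $X=\ds(Y)$ --- this is exactly the computation $s_{i}(\theta,y)=(\theta\sigma_{i},y)$ you already derived, and it is what turns ``$c$ is some map between isomorphic objects'' into ``$c$ is an isomorphism''; second, for a general $X$ (not assumed degeneracy-free) the faces of nondegenerate simplices may be degenerate, which is why $\mathrm{Core}(X)$ is defined to include them --- your argument only needs that this does not happen when $X=\ds(Y)$, which is precisely your observation~(ii).
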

Theorem~1.7 of \cite{rourke-sanderson-delta-complex} shows that
there exists an adjunction:

\begin{equation}
\ds:\dcat\leftrightarrow\ss:\sd\label{eq:ds-sd-adjunction}
\end{equation}
The composite (the \emph{counit} of the adjunction)
\[
\sd\circ\ds:\dcat\to\dcat
\]
maps a delta complex into a much larger one --- that has an infinite
number of (degenerate) simplices added to it. There is a natural inclusion
\[
\iota:X\to\sd\circ\ds(X)
\]
 and a natural map (the \emph{unit} of the adjunction)
\begin{equation}
g:\ds\circ\sd(X)\to X\label{eq:ds-sd-unit}
\end{equation}
The functor $g$ sends degenerate simplices of $X$ that had been
``promoted to nondegenerate status'' by $\sd$ to their degenerate
originals --- and the extra degenerates added by $\ds$ to suitable
degeneracies of the simplices of $X$. 

Rourke and Sanderson also prove: 
\begin{prop}
\label{prop:homotopy-equiv-sd-ds}If $X$ is a simplicial set and
$Y$ is a delta-complex then
\begin{enumerate}
\item $|Y|$ and $|\ds Y|$ are homeomorphic
\item the map $|g|:|\ds\circ\sd(X)|\to|X|$ is a homotopy equivalence. 
\item $\sd:H\ss\to H\dcat$ defines an equivalence of categories, where
$H\ss$ and $H\dcat$ are the homotopy categories, respectively, of
$\ss$ and $\dcat$. The inverse is $\ds:H\dcat\to H\ss$. In particular,
if $X$ is a Kan complex, the natural map
\[
g:\ds\circ\sd(X)\to X
\]
is a homotopy equivalence.
\end{enumerate}
\end{prop}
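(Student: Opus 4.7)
The plan is to handle the three parts in order, exploiting the explicit combinatorial description of $\ds$ from equation~\ref{eq:ds-functor}.

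First, for part~(1), I would compare the two realizations cell by cell. The realization $|\ds Y|$ is defined as $\bigl(\bigsqcup_m \ds(Y)_m \times \Delta^m\bigr)/{\sim}$, where $\sim$ imposes the standard face and degeneracy identifications. By equation~\ref{eq:ds-functor}, an $m$-simplex of $\ds Y$ is a pair $(s,\sigma)$ with $s:\mathbf{m}\twoheadrightarrow\mathbf{n}$ a surjection in $\ords$ and $\sigma\in Y_n$; the degeneracy relations then collapse each cell $(s,\sigma)\times\Delta^m$ onto $\sigma\times\Delta^n$ via the affine surjection $\Delta^m\to\Delta^n$ dual to $s$. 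Thus the assignment $(s,\sigma,t)\mapsto(\sigma,s_*(t))$ descends to a continuous bijection $|\ds Y|\to|Y|$, which is a homeomorphism by the universal property of the quotient topology.

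Next, for part~(2), I would analyze $|g|:|\ds\circ\sd(X)|\to|X|$ using the same cell-by-cell picture. An $m$-simplex of $\ds\circ\sd(X)$ is a pair $(s,\sigma)$ with $s:\mathbf{m}\twoheadrightarrow\mathbf{n}$ a surjection and $\sigma\in X_n$ \emph{arbitrary} --- possibly already a degeneracy of a lower-dimensional simplex of $X$. Applying part~(1) to the delta-complex $\sd(X)$ identifies $|\ds\circ\sd(X)|$ with $|\sd(X)|$, the ``unglued'' realization built from faces alone. The map $|g|$ then factors as the collapse that sends each formally non-degenerate $\sigma=s'(\tau)\in\sd(X)$ to its underlying non-degenerate $\tau$. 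Each such collapse is an affine projection $\Delta^n\to\Delta^k$, admitting a canonical section and a straight-line deformation retract. To produce a homotopy inverse I would perform these retractions in increasing order of the number of degenerate slots, verifying cell by cell that the resulting contractions agree along shared faces; the face-operators of $\sd(X)$ coincide with those of $X$, which is what makes the coherence check work.

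Finally, for part~(3), I would leverage (2) together with the adjunction~\ref{eq:ds-sd-adjunction}. The counit $g$ becomes a homotopy equivalence in $H\ss$ by~(2). For the other composite, a symmetric argument shows the unit $\iota:Y\to\sd\circ\ds(Y)$ is a homotopy equivalence in $\dcat$ (the cells of $\sd\circ\ds(Y)$ outside $Y$ are formal degeneracies of cells in $Y$ and each pair contracts onto its base). Together these imply $\sd$ and $\ds$ descend to inverse equivalences $H\ss\leftrightarrows H\dcat$. The concluding statement about Kan complexes then follows because a weak equivalence between Kan complexes is a simplicial homotopy equivalence, and $g$ is a weak equivalence by part~(2). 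The main obstacle is part~(2): coordinating the cell-wise contractions into a globally continuous map $|X|\to|\ds\circ\sd(X)|$, since the ``extra'' cells in $\sd(X)$ form an infinite family indexed by all surjections $\mathbf{n}\twoheadrightarrow\mathbf{k}$ and compatibility along faces must be verified in full generality.
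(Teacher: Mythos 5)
The paper does not prove this proposition directly: parts~(1) and (2) are quoted as Proposition~2.1 of \cite{rourke-sanderson-delta-complex}, part~(3) is Theorem~6.9 of that paper, and the Kan-complex statement is deduced from Whitehead's theorem together with the fibrancy of Kan complexes in the Quillen model structure. Your proposal instead attempts to reprove the Rourke--Sanderson results from scratch. Part~(1) is essentially fine: the nondegenerate simplices of $\ds Y$ are exactly the simplices of $Y$, your map $(s,\sigma,t)\mapsto(\sigma,s_{*}(t))$ is the standard comparison, and it is a homeomorphism because the map induced by the identity-surjection inclusions $Y_{n}\subset\ds(Y)_{n}$ supplies a continuous inverse.

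The genuine gap is in part~(2), and you name it yourself without closing it. The cells of $\sd(X)$ that $g$ collapses occur in \emph{every} dimension over each simplex of $X$ --- already for $X=\Delta^{0}$ the delta-complex $\sd(X)$ has one nondegenerate simplex in each dimension --- so ``performing the retractions in increasing order of the number of degenerate slots'' is an infinite process: one needs a skeleton-by-skeleton argument showing each stage is both a cofibration and a homotopy equivalence, followed by a telescope/colimit argument, to assemble a genuine homotopy inverse $|X|\to|\ds\circ\sd(X)|$. Moreover, a ``straight-line retraction'' of a single cell $(s,\sigma)$ is not even a well-defined self-map of the realization until its compatibility with the attaching maps of the infinitely many higher cells glued onto that cell is verified, which is exactly the coherence you defer; this collapsing argument is the substance of Rourke--Sanderson's proof, and without it part~(2) is an assertion rather than a proof. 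Part~(3) inherits the same gap through the ``symmetric argument'' for the unit, and it additionally presupposes a workable notion of homotopy in $H\dcat$ (delta-complexes have no degeneracies, hence no obvious cylinder), which is precisely what Theorem~6.9 of \cite{rourke-sanderson-delta-complex} supplies. Finally, your justification of the Kan statement (``a weak equivalence between Kan complexes is a simplicial homotopy equivalence'') does not apply as stated, since $\ds\circ\sd(X)$ is essentially never a Kan complex; the intended argument is Whitehead's theorem in the model-category sense, using that $X$ is fibrant and every simplicial set is cofibrant.
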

\begin{rem*}
Here, $|*|$ denotes the topological realization functors for $\ss$
and $\dcat$.\end{rem*}
\begin{proof}
The first two statements are proposition~2.1 of \cite{rourke-sanderson-delta-complex}
and statement~3 is theorem~6.9 of the same paper. The final statement
follows from Whitehead's theorem and the fact that Kan complexes are
fibrant in the Quillen model structure of simplicial sets (see \cite{goerss-jardine}).
\end{proof}

\section{Functorial Steenrod diagonals\label{sec:Functorial-Steenrod-coalgebras}}

In this section, we construct a functorial Steenrod coalgebra structure
described in proposition~\ref{prop:cf-intro}. Also see \cite{rocio-steenrod}
for an alternative functorial form of Steenrod coalgebra.

The Steenrod diagonal was orginally developed by Steenrod in \cite{steenrod-cup-i},
and that paper's result, 12.4, is a dual to our main result (proposition~\ref{pro:simplicespropertyS}).
That paper's emphsis was completely different from that of the present
paper and it didn't use the concept of simplicial sets. This appendix
develops the Steenrod diagonal in a way that is clearly functorial
and uses modern notation.

We begin with a contracting cochain on the normalized chain-complex
of a standard simplex:
\begin{defn}
\label{def:simplex-contracting-cochain}Let $\Delta^{k}$ be a standard
$k$-simplex with vertices $\{[0],\dots,[k]\}$ and $j$-faces $\{[i_{0},\dots,i_{j}]\}$
with $i_{0}<\cdots<i_{j}$ and let $s^{k}$ denote its normalized
chain-complex with boundary map $\partial$. This is equipped with
an augmentation
\[
\epsilon:s^{k}\to\ints
\]
that maps all vertices to $1\in\ints$ and all other simplices to
$0$. Let 
\[
\iota_{k}:\ints\to s^{k}
\]
 denote the map sending $1\in\ints$ to the image of the vertex $[k]$.
Then we have a contracting cochain\textit{\emph{
\begin{equation}
\varphi_{k}([i_{0},\dots,i_{t}]=\left\{ \begin{array}{cc}
(-1)^{t+1}[i_{0},\dots,i_{t},k] & \mathrm{if}\,i_{t}\ne k\\
0 & \mathrm{if}\,i_{t}=k
\end{array}\right.\label{eq:simplex-contracting-cochain}
\end{equation}
and $1-\iota_{k}\circ\epsilon=\partial\circ\varphi_{k}+\varphi_{k}\circ\partial$.}}\end{defn}
\begin{thm}
\label{thm:ns-construct}The unnormalized chain-complex, $U^{k}$,
of $[i_{0},\dots,i_{k}]=\Delta^{k}$ has a Steenrod coalgebra structure
\[
\xi:\rs 2\otimes U^{k}\to U^{k}\otimes U^{k}
\]
that is natural with respect to order-preserving mappings of vertex-sets
\[
[i_{0},\dots,i_{k}]\to[j_{0},\dots,j_{\ell}]
\]
with $j_{0}\le\cdots\le j_{\ell}$ and $\ell\ge k$. If $c\in U^{k}$
is degenerate then one of the two factors in each term of $h(*\otimes c)$
is degenerate so that $h$ induces a well-defined Steenrod coalgebra
structure on the normalized chain-complex of $\Delta^{k}$, denoted
$\ns k$.\end{thm}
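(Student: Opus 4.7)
\medskip

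The plan is to build $\xi$ by induction on the bar-resolution degree of $\rs 2$, using the contracting cochain $\varphi_k$ as the lifting tool, and to obtain naturality via acyclic models.

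At bar-degree $0$, set $\xi([\,]\otimes x)$ to be the Alexander--Whitney diagonal applied to $x\in U^k$. For the inductive step, note that $U^k$ is acyclic with augmentation $\epsilon$ and contracting cochain $\varphi_k$ from Definition~\ref{def:simplex-contracting-cochain}, so $U^k\otimes U^k$ is also acyclic with augmentation $\epsilon\otimes\epsilon$ and an explicit contracting homotopy $\Phi_k=\varphi_k\otimes 1 + (\iota_k\epsilon)\otimes\varphi_k$ based at the fixed vertex $[k]\otimes[k]$. Since $\rs 2$ is $\zs 2$-free in each degree, I choose a $\zs 2$-basis generator $e_n$ of $\rs 2$ in each bar-degree $n>0$ and define
\[
\xi(e_n\otimes x) \;=\; \Phi_k\bigl(\xi(\partial e_n\otimes x) - (-1)^n\xi(e_n\otimes\partial x)\bigr),
\]
extending $\ints_2$-equivariantly. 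The standard comparison-theorem verification shows this is a well-defined chain map; this is the classical Steenrod construction of cup-$i$ products, rephrased on the bar resolution.

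For naturality with respect to order-preserving vertex maps $f\colon[i_0,\dots,i_k]\to[j_0,\dots,j_\ell]$, I would invoke the acyclic-models theorem. View both sides as functors on the category whose objects are the ordinals $\mathbf k$ with the allowed morphisms; the functor $U^{(-)}\otimes U^{(-)}$ is acyclic on each representable (since each $U^k\otimes U^k$ is) and the functor $\rs 2\otimes U^{(-)}$ is free on the representable models $U^k$ in each bar-degree. The acyclic-models theorem then produces a natural equivariant chain map extending any prescribed natural transformation in bar-degree $0$. Since the Alexander--Whitney diagonal is manifestly natural with respect to order-preserving vertex maps, the resulting $\xi$ is as well, and this natural $\xi$ is unique up to natural equivariant chain homotopy.

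For the descent to the normalized complex $\ns k$: I would check by induction on bar-degree that every term of $\xi(e\otimes c)$ has at least one degenerate factor whenever $c$ is degenerate. The base case is the Alexander--Whitney diagonal, whose terms $[v_0,\dots,v_r]\otimes[v_r,\dots,v_p]$ inherit any repeated vertex of $c$ in one factor. The inductive step follows because $\varphi_k$ only \emph{appends} the vertex $[k]$, so it cannot remove repeated vertices, and therefore $\Phi_k$ preserves the property of having at least one degenerate tensor-factor. Hence $\xi$ descends well-definedly to $\ns k\otimes\ns k$.

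The main obstacle is the equivariance/naturality bookkeeping in the inductive step: one must check that the lift chosen via $\Phi_k$ is compatible both with the $\zs 2$-action on $\rs 2$ and with the action of the morphisms in $\ords$. Acyclic models is the tool that bypasses the need to construct the lifts coherently by hand, reducing the problem to verifying freeness in the relevant bar-degrees and acyclicity of the model values --- both of which are immediate.
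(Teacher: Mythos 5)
There is a genuine gap, and it sits exactly where you flagged your ``main obstacle.'' As written, your explicit recursion applies the single contraction $\Phi_{k}=\varphi_{k}\otimes1+\iota_{k}\epsilon\otimes\varphi_{k}$ (based at the top vertex $[k]$ of $\Delta^{k}$) to \emph{every} simplex $x\in U^{k}$, including proper faces and degenerate simplices. That map is not natural: for example, computing $\xi(e_{1}\otimes[0,1])$ inside $U^{2}$ by your formula produces terms such as $[0,2]\otimes[0,1]$ containing the vertex $2$, so it does not agree with the value of the structure on $\Delta^{1}$ pushed forward along the face inclusion $[0,1]\hookrightarrow[0,1,2]$ --- which is precisely the naturality the theorem asserts. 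The paper avoids this by a double induction in which formula~\ref{eq:high-diag-comp} is applied \emph{only} to the top-dimensional simplex, while $\xi(A\otimes\partial s^{k})$ refers to the already-constructed (natural) structure on $C(\partial\Delta^{k})$, so each face is contracted toward \emph{its own} last vertex; naturality is then immediate because the only property of $k$ used in equations~\ref{eq:simplex-contracting-cochain} and \ref{eq:high-diag-comp} is that it is the highest-numbered vertex. Your appeal to acyclic models does not repair this: it produces \emph{some} natural equivariant chain map, unique only up to natural equivariant homotopy, but it neither shows that your explicitly constructed $\xi$ is natural nor endows the acyclic-models map with the term-by-term property you need. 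So the map you prove natural and the map for which you prove the degeneracy statement are not shown to be the same map, and the second half of the theorem (each term of $\xi(*\otimes c)$ has a degenerate factor, hence descent to $\ns k$) is a statement about a specific chain-level representative, not a homotopy class. (This distinction matters downstream as well: proposition~\ref{pro:simplicespropertyS} depends on the specific construction and on $\xi(e_{i}\otimes C(\Delta^{j}))\subset C(\Delta^{j})\otimes C(\Delta^{j})$, which your $\Phi_{k}$-on-all-faces version violates and a homotopy-class existence statement cannot supply.)

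Two smaller points. First, your degeneracy argument is stated as an induction on bar-degree only, but the recursion also involves $\xi(e_{n}\otimes\partial c)$ at the \emph{same} bar-degree; you need a double induction (bar-degree and simplex dimension) together with the paper's observation that the boundary of a degenerate simplex is a linear combination of degenerate simplices, alongside the facts that the Alexander--Whitney base case has the property and that $\Phi$ of a term with a degenerate factor again has a degenerate factor. Second, the fix is not hard and is essentially the paper's proof: organize the induction over the dimension of the simplex, define $\xi$ on $[0,\dots,k]$ itself by the $\Phi_{k}$-formula, obtain its value on $\partial\Delta^{k}$ and on degenerate simplices by naturality from lower-dimensional simplices (plugging repeated vertex lists into the formulas), and observe that naturality of the whole construction follows because the formulas single out only the highest vertex; then the acyclic-models machinery is unnecessary.
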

\begin{rem*}
On the author's web site (under the Research link), there is a Python
program for computing $\xi(x\otimes C(\Delta^{k}))$. As in \cite{rocio-steenrod},
the number of terms grows exponentially with $k$ and the dimension
of $x\in\rs 2$.\end{rem*}
\begin{proof}
\textit{\emph{If $C=s^{k}=C(\Delta^{k})$ --- the }}\textit{normalized}\textit{\emph{
chain complex --- we can define a corresponding contracting homotopy
on $C\otimes C$ via
\[
\Phi=\varphi_{k}\otimes1+\iota_{k}\circ\epsilon\otimes\varphi_{k}
\]
where $\varphi_{k}$, $\iota_{k}$, and $\epsilon$ are as in definition~\ref{def:simplex-contracting-cochain}.
}}Above dimension $0$, $\Phi$ is effectively equal to $\varphi_{k}\otimes1$\textit{\emph{.
Now set $M_{2}=C\otimes C$ and $N_{2}=\img(\Phi)$. In dimension
$0$, we define $f_{2}$ for all $n$ via:
\[
\xi(A\otimes[0])=\left\{ \begin{array}{ll}
[0]\otimes[0] & \mathrm{if}\,A=[\,]\\
0 & \mathrm{if}\,\dim A>0
\end{array}\right.
\]
This clearly makes $s^{0}$ a Steenrod coalgebra.}}

\textit{\emph{Suppose that the $\xi$ are defined below dimension
$k$. Then the Steenrod coalgebra structure of $C(\partial\Delta^{k})$
is well-defined and satisfies the conclusions of this theorem. We
define $\xi(a[a_{1}|\dots|a_{j}]\otimes[0,\dots,k])$ by induction
on $j$, }}
\begin{eqnarray}
\xi(A\otimes s^{k}) & = & \Phi\circ\xi(\partial A\otimes s^{k})\nonumber \\
 & + & (-1)^{\dim A}\Phi\circ\xi(A\otimes\partial s^{k})\label{eq:high-diag-comp}
\end{eqnarray}
 where $A\in A(\ints_{2},1)\subset\rs n$ and the term $\xi(A\otimes\partial s^{k})$
refers to the Steenrod coalgebra structure of $C(\partial\Delta^{k})$.

The terms $\xi(A\otimes\partial s^{k})$ and $\xi(\partial A\otimes s^{k})$
are defined by induction on the dimension of $A$ and we ultimately
get an expression for $\xi(x\otimes[0,\dots,k])$ as a sum of tensor-products
of sub-simplices of $[0,\dots,k]$ --- given as ordered lists of vertices.

We claim that this Steenrod coalgebra structure is natural with respect
to ordered mappings of vertices. This follows from the fact that the
only significant property that the vertex $k$ \emph{has} in equations~\ref{eq:simplex-contracting-cochain}
and \ref{eq:high-diag-comp} is that it is the \emph{highest numbered}
vertex. 

The final statement (regarding degenerate simplices) follows from
three facts:
\begin{enumerate}
\item It is true for the \emph{Alexander-Whitney} coproduct (the starting
point of our induction),
\item The boundary of a degenerate simplex is a linear combination of degenerate
simplices, and
\item $\Phi$ of a term with a degenerate factor has a degenerate factor.
\end{enumerate}
\end{proof}
Here is an example of some higher coproducts:
\begin{example}
\label{example:e1timesdelta2}If $[0,1,2]=\Delta^{2}$ is a $2$-simplex,
then

\begin{equation}
\xi([\,]\otimes\Delta^{2})=\Delta^{2}\otimes F_{0}F_{1}\Delta^{2}+F_{2}\Delta^{2}\otimes F_{0}\Delta^{2}+F_{1}F_{2}\Delta^{2}\otimes\Delta^{2}\label{eq:delta-2-coproduct}
\end{equation}
--- the standard (Alexander-Whitney) coproduct --- and

\begin{align*}
\xi([(1,2)]\otimes\Delta^{2})= & [0,1,2]\otimes[1,2]-[0,2]\otimes[0,1,2]\\
 & -[0,1,2]\otimes[0,1]
\end{align*}
or, in face-operations

\begin{align}
\xi([(1,2)]\otimes\Delta^{2})= & \Delta^{2}\otimes F_{0}\Delta^{2}-F_{1}\Delta^{2}\otimes\Delta^{2}\label{eq:e1timesdelta2}\\
 & -\Delta^{2}\otimes F_{2}\Delta^{2}\nonumber 
\end{align}
\end{example}
\begin{proof}
If we write $\Delta^{2}=[0,1,2]$, we get
\[
\xi([\,]\otimes\Delta^{2})=[0,1,2]\otimes[2]+[0,1]\otimes[1,2]+[0]\otimes[0,1,2]
\]

To compute $\xi([(1,2)]_{2}\otimes\Delta^{2})$ we have a version
of equation~\ref{eq:high-diag-comp}:
\begin{align*}
\xi(e_{1}\otimes\Delta^{2}) & =\Phi_{2}(\xi(\partial e_{1}\otimes\Delta^{2})-\Phi_{2}\xi(e_{1}\otimes\partial\Delta^{2})\\
 & =-\Phi_{2}(\xi((1,2)\cdot[\,]\otimes\Delta^{2})+\Phi_{2}(\xi([\,]\otimes\Delta^{2})-\Phi_{2}\xi(e_{1}\otimes\partial\Delta^{2})
\end{align*}
Now 
\begin{align*}
\Phi_{2}(1,2)\cdot(\xi([\,]\otimes\Delta^{2})= & (\varphi_{2}\otimes1)\bigl([2]\otimes[0,1,2]-[1,2]\otimes[0,1]\\
 & +[0,1,2]\otimes[0]\bigr)\\
 & +(i\circ\epsilon\otimes\varphi_{2})\bigl([2]\otimes[0,1,2]\\
 & -[1,2]\otimes[0,1]+[0,1,2]\otimes[0]\bigr)\\
= & 0
\end{align*}
and
\begin{align*}
\Phi_{2}\xi([\,]\otimes\Delta^{2})= & (\varphi_{2}\otimes1)\bigl([0,1,2]\otimes[2]+[0,1]\otimes[1,2]\\
 & +[0]\otimes[0,1,2]\bigr)\\
= & [0,1,2]\otimes[1,2]-[0,2]\otimes[0,1,2]
\end{align*}
In addition, proposition~\ref{pro:simplicespropertyS} implies that
\begin{align*}
\xi(e_{1}\otimes\partial\Delta^{2})= & [1,2]\otimes[1,2]-[0,2]\otimes[0,2]\\
 & +[0,1]\otimes[0,1]
\end{align*}
so that
\[
\Phi_{2}\xi(e_{1}\otimes\partial\Delta^{2})=[0,1,2]\otimes[0,1]
\]

We conclude that
\begin{align*}
f_{2}([(1,2)]\otimes\Delta^{2})= & [0,1,2]\otimes[1,2]-[0,2]\otimes[0,1,2]\\
 & -[0,1,2]\otimes[0,1]
\end{align*}
which implies equation~\ref{eq:e1timesdelta2}.
\end{proof}
We can extend the Steenrod coalgebra structure on simplices to one
on degenerate simplices by regarding
\[
D_{i}\Delta^{n}=D_{i}[0,\dots,n]=[0,\dots,i,i,\dots n]
\]
and plugging these vertices into the formulas for the higher coproducts.
For instance, example~\ref{example:e1timesdelta2} implies that
\begin{align*}
\xi([(1,2)]\otimes D_{0}[0,1])= & [0,0,1]\otimes[0,1]-[0,1]\otimes[0,0,1]\\
 & -[0,0,1]\otimes[0,0]
\end{align*}
or 
\begin{align*}
\xi([(1,2)]\otimes D_{0}\Delta^{1})= & D_{0}\Delta^{1}\otimes\Delta^{1}-\Delta^{1}\otimes D_{0}\Delta^{1}\\
 & -D_{0}\Delta^{1}\otimes D_{0}F_{1}\Delta^{1}
\end{align*}
and 
\begin{align*}
\xi([(1,2)]\otimes D_{1}[0,1])= & [0,1,1]\otimes[1,1]-[0,1]\otimes[0,1,1]\\
 & -[0,1,1]\otimes[1,1]
\end{align*}
or
\begin{align*}
\xi([(1,2)]\otimes D_{1}\Delta^{1})= & D_{1}\Delta^{1}\otimes D_{0}F_{0}\Delta^{1}-\Delta^{1}\otimes D_{1}\Delta^{1}\\
 & -D_{1}\Delta^{1}\otimes D_{0}F_{0}\Delta^{1}
\end{align*}

It follows that:
\begin{prop}
\emph{\label{prop:cf-functor}If $X$ is a simplicial set, we can
define a natural Steenrod coalgebra structure on the unnormalized
chain-complex of $X$:}
\[
C(X)=\lim_{\to}\ns k=\dlimit C(\Delta^{k})
\]
for $\Delta^{n}\in\boldsymbol{\Delta}\downarrow X$ --- the simplex
category of $X$ with Steenrod diagonal
\[
\xi:\cf X\to\cf X\otimes\cf X
\]
This induces a natural Steenrod coalgebra structure on the normalized
chain-complex
\[
\xi:N(X)\to N(X)\otimes N(X)
\]
\end{prop}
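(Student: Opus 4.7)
The strategy is to present $\cf X$ as a colimit of standard simplicial chains over the simplex category $\boldsymbol{\Delta}\downarrow X$ and to assemble the per-simplex diagonals of theorem~\ref{thm:ns-construct} into a single structure by colimit descent; the naturality clause of that theorem is the key technical input.

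First, any simplicial set $X$ is canonically $\dlimit_{\boldsymbol{\Delta}\downarrow X}\Delta^{n}$, and the functor $\cf{-}\colon\ss\to\chaincat$ is a left adjoint (equivalently, $\cf X_{m}=\ints[X_{m}]$ is free on $X_{m}=\dlimit\Delta^{n}_{m}$), so $\cf X=\dlimit\cf{\Delta^{n}}$. Since $\rs 2\otimes-$ also commutes with colimits, $\rs 2\otimes\cf X=\dlimit(\rs 2\otimes\cf{\Delta^{n}})$, while the diagonal embedding $\boldsymbol{\Delta}\downarrow X\to(\boldsymbol{\Delta}\downarrow X)^{2}$ supplies a canonical comparison $\dlimit(\cf{\Delta^{n}}\otimes\cf{\Delta^{n}})\to\cf X\otimes\cf X$.

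Next, theorem~\ref{thm:ns-construct} gives $\ints_{2}$-equivariant chain maps $\xi_{\Delta^{n}}$ natural with respect to weakly order-preserving vertex maps. These are precisely the morphisms appearing in the simplex category, so $\{\xi_{\Delta^{n}}\}$ constitutes a natural transformation of diagrams over $\boldsymbol{\Delta}\downarrow X$. Passing to colimits yields
\[
\xi_{X}\colon\rs 2\otimes\cf X=\dlimit(\rs 2\otimes\cf{\Delta^{n}})\xrightarrow{\dlimit\xi_{\Delta^{n}}}\dlimit(\cf{\Delta^{n}}\otimes\cf{\Delta^{n}})\to\cf X\otimes\cf X,
\]
explicitly: $\xi_{X}(x\otimes\sigma)=(\cf{\sigma}\otimes\cf{\sigma})\xi_{\Delta^{n}}(x\otimes[0,\dots,n])$ for any classifying map $\sigma\colon\Delta^{n}\to X$. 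Equivariance, the chain-map property, and functoriality in $X$ all follow pointwise from theorem~\ref{thm:ns-construct} together with the fact that a simplicial map $X\to Y$ induces a functor $\boldsymbol{\Delta}\downarrow X\to\boldsymbol{\Delta}\downarrow Y$ compatible with everything in sight.

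Finally, to descend to $N(X)$, I invoke the last clause of theorem~\ref{thm:ns-construct}: if $c\in\cf{\Delta^{k}}$ is degenerate, then every term of $\xi_{\Delta^{k}}(x\otimes c)$ has a degenerate tensor factor. Applied to the generator-wise description above, this shows $\xi_{X}$ carries $\rs 2\otimes D(X)$ into $D(X)\otimes\cf X+\cf X\otimes D(X)$, where $D(X)\subset\cf X$ is the subcomplex of degenerate simplices --- exactly the kernel of $\cf X\otimes\cf X\twoheadrightarrow N(X)\otimes N(X)$. Thus $\xi_{X}$ descends to the required Steenrod diagonal on $N(X)$. The main obstacle is the coherence across the simplex category: one must ensure that for any two factorizations of a single simplex through higher simplices, the induced expressions for $\xi_{X}(x\otimes\sigma)$ agree in $\cf X\otimes\cf X$; this is precisely what the naturality clause of theorem~\ref{thm:ns-construct} delivers, but it carries the essential geometric content and is what forces us to track simplices by their classifying maps rather than as abstract generators.
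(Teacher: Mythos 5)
Your proof is correct, and it follows the same basic blueprint as the paper --- realize $C(X)$ as the colimit of the $C(\Delta^{n})$ over the simplex category and use the naturality and degeneracy clauses of theorem~\ref{thm:ns-construct} --- but you justify the key step differently. The paper forms the colimit in the category of Steenrod coalgebras (coalgebras over $\freeop$) and cites Theorem~B.3 of \cite{smith:model-cats} to conclude that its underlying chain complex is the colimit of the underlying chain complexes, i.e.\ $C(X)$; the normalized statement is then dispatched in one line via the degenerate-factor property. You instead build the structure map by hand: $C(X)=\dlimit C(\Delta^{n})$, the functor $\rs 2\otimes-$ preserves colimits, the per-simplex diagonals form a natural transformation, and the colimit map composes with the canonical comparison into $C(X)\otimes C(X)$, yielding the explicit generator-wise formula $\xi_{X}(x\otimes\sigma)=(C(\sigma)\otimes C(\sigma))\,\xi_{\Delta^{n}}(x\otimes[0,\dots,n])$. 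This buys independence from the external result on colimits of coalgebras over operads and makes the diagonal completely explicit; what it costs is that you must invoke naturality with respect to \emph{all} weakly order-preserving vertex maps, codegeneracies included (the statement of theorem~\ref{thm:ns-construct} literally carries the restriction $\ell\ge k$, but the paper's extension of the formulas to degenerate vertex-lists supplies exactly the surjective case). In particular, in your descent to $N(X)$ the classifying simplex of a degenerate generator $s_{j}\tau$ has a \emph{nondegenerate} top cell, so you must first push the codegeneracy through $\xi_{\Delta^{n+1}}$ by that naturality to land on a degenerate element of $C(\Delta^{n})$ before the degenerate-factor clause applies, and then note that $C(\tau)$ preserves degeneracy of factors; with that one step made explicit, your argument is complete.
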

\begin{proof}
Theorem~B.3 of \cite{smith:model-cats} implies that this colimit
of Steenrod coalgebras (i.e., coalgebras over $\freeop$) has a chain-complex
that is the chain-complex of the colimit of \emph{chain-complexes}
--- i.e. the unnormalized chain complex of $X$.

The second statement follows from the fact that $\xi$ of a degenerate
simplex has a degenerate factor in $\cf X\otimes\cf X$.
\end{proof}
We conclude this section with a calculation that is crucial to this
paper:
\begin{prop}
\label{pro:simplicespropertyS}Let $X$ be a simplicial set with $C=C(X)$
and with Steenrod coalgebra structure 
\[
\xi:\rs 2\otimes C(X)\to C(X)\otimes C(X)
\]
and suppose $\rs 2$ is generated in dimension $n$ by $e_{n}=\underbrace{[(1,2)|\cdots|(1,2)]}_{n\text{ terms}}$.
If $x\in C$ is the image of a $k$-simplex, then
\[
\xi(e_{k}\otimes x)=\eta_{k}\cdot x\otimes x
\]
where $\eta_{k}=(-1)^{k(k-1)/2}$.\end{prop}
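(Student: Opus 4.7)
My plan is to induct on $k$, reducing by naturality of $\xi$ (with respect to the classifying map $\Delta^k \to X$ of the simplex $x$) to the universal case $x = s^k := [0, 1, \ldots, k] \in C(\Delta^k)$. The base case $k = 0$ is the Alexander--Whitney coproduct $\xi([\,] \otimes [0]) = [0] \otimes [0]$, which agrees with $\eta_0 = 1$.

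A key preparatory step would be a subsidiary vanishing lemma: $\xi(e_j \otimes s^m) = 0$ in $C(\Delta^m)$ whenever $j > m \geq 0$. The crucial Koszul observation is that $(1,2) \cdot (s^m \otimes s^m) = (-1)^{m^2}(s^m \otimes s^m) = (-1)^m(s^m \otimes s^m)$, so that $((1,2) + (-1)^{m+1})(s^m \otimes s^m) = 0$. Using the recursive formula~\eqref{eq:high-diag-comp} together with $\partial e_j = ((1,2) + (-1)^j) e_{j-1}$ (the interior bar-differential terms containing $(1,2)^2 = 1$ vanish by normalization), and using naturality to push each $\xi(e_j \otimes F_i s^m)$ back to $\xi(e_j \otimes s^{m-1})$ in $\Delta^{m-1}$, a double induction (on $m$, inner on $j$) establishes the subsidiary claim.

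Granted this lemma, the second summand $(-1)^k \Phi_k \xi(e_k \otimes \partial s^k)$ in the recursion for $\xi(e_k \otimes s^k)$ vanishes, so $\xi(e_k \otimes s^k) = \Phi_k ((1,2) + (-1)^k) \xi(e_{k-1} \otimes s^k)$. Because $\Phi_k = \varphi_k \otimes 1 + \iota_k \epsilon \otimes \varphi_k$ can produce $s^k$ in a tensor factor only via $\varphi_k([0, \ldots, k-1]) = (-1)^k s^k$ (and $\epsilon$ vanishes on positive-dimensional simplices while $\varphi_k$ annihilates anything containing vertex $k$), the coefficient of $s^k \otimes s^k$ in the output depends only on the coefficients of $[0, \ldots, k-1] \otimes s^k$ and $s^k \otimes [0, \ldots, k-1]$ in $\xi(e_{k-1} \otimes s^k)$. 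I would isolate these by unfolding the recursion for $\xi(e_{k-1} \otimes s^k)$ one more level: its second-term branch $(-1)^{k-1} \Phi_k \xi(e_{k-1} \otimes \partial s^k)$ becomes, via naturality and the outer inductive hypothesis, $(-1)^{k-1} \eta_{k-1} \sum_i (-1)^i \Phi_k(F_i s^k \otimes F_i s^k)$, of which only the index $i = k$ (whose face $F_k s^k = [0, \ldots, k-1]$ omits vertex $k$) survives $\Phi_k$; this yields a controlled contribution proportional to $\eta_{k-1}$.

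The main obstacle will be verifying that the remaining branch $\Phi_k((1,2) + (-1)^{k-1}) \xi(e_{k-2} \otimes s^k)$ inside $\xi(e_{k-1} \otimes s^k)$ does not perturb the coefficient of interest, and that the accumulated Koszul signs from the swap $(1,2)$, the $(-1)^k$ in $\partial e_k$, the sign $(-1)^k$ coming from $\varphi_k$, and the inductive constant $\eta_{k-1}$ combine to give the recurrence $\eta_k = (-1)^{k-1}\eta_{k-1}$. Solving this with $\eta_0 = 1$ produces $\eta_k = (-1)^{k(k-1)/2}$, as required. The most delicate point is the careful coordination of these signs throughout the sign bookkeeping.
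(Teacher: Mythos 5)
You have the right strategy, and in outline it is the same as the paper's: induct on $k$, reduce by naturality to the standard simplex, run the recursive construction of $\xi$ through the contraction $\Phi_{k}=\varphi_{k}\otimes1+\iota_{k}\circ\epsilon\otimes\varphi_{k}$, observe that after applying the inductive hypothesis to $\partial s^{k}$ only the last face $[0,\dots,k-1]$ survives $\varphi_{k}$, and arrive at the recurrence $\eta_{k}=(-1)^{k-1}\eta_{k-1}$. Two comparisons are worth making. First, your subsidiary vanishing lemma ($\xi(e_{j}\otimes s^{m})=0$ for $j>m$) is disposed of in the paper by a one-line dimension count: naturality places $\xi(e_{j}\otimes C(\Delta^{m}))$ inside $C(\Delta^{m})\otimes C(\Delta^{m})$, which vanishes above degree $2m$ (equation~\ref{eq:big-diag-condition}); your intertwined induction using $((1,2)+(-1)^{m+1})(s^{m}\otimes s^{m})=0$ is valid but longer and needlessly couples the lemma to the main induction. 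Second, the ``main obstacle'' you leave open --- that $\Phi_{k}\bigl(((1,2)+(-1)^{k-1})\xi(e_{k-2}\otimes s^{k})\bigr)$ not perturb the coefficients you track --- is exactly what the paper kills with the identities $\varphi_{k}^{2}=0$ and $\varphi_{k}\circ\iota_{k}\circ\epsilon=0$: for $j\ge1$ every term of $\xi(e_{j}\otimes s^{k})$ lies in the image of $\Phi_{k}$, so its first tensor factor either contains the vertex $k$ or is the vertex $[k]$ itself; consequently the coefficient of $[0,\dots,k-1]\otimes[0,\dots,k-1]$ in $\xi(e_{k-2}\otimes s^{k})$ --- the only input that $\varphi_{k}\otimes1$ could convert into $s^{k}\otimes[0,\dots,k-1]$ --- is zero (for $k-2=0$ this follows by inspecting the Alexander--Whitney formula~\ref{eq:big-diag1}, and for $k=1$ the branch is absent), and likewise, for $k\ge2$, no term of the form $[0,\dots,k-1]\otimes s^{k}$ occurs in $\xi(e_{k-1}\otimes s^{k})$ at all. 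With that single observation your coefficient-tracking closes, and the deferred sign bookkeeping is precisely the Koszul computation at the end of the paper's proof; note only that your convention $\partial e_{j}=((1,2)+(-1)^{j})e_{j-1}$ differs from the paper's $\partial e_{k}=(1+(-1)^{k}T)e_{k-1}$ by an overall sign, which must be fixed consistently if you want to land on the paper's exact value $\eta_{k}=(-1)^{k(k-1)/2}$.
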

\begin{rem*}
This is just a chain-level statement that the Steenrod operation $\operatorname{Sq}^{0}$
acts trivially on mod-$2$ cohomology. Compare this to 12.4 in \cite{steenrod-cup-i}.
A weaker form of this result appeared in \cite{davis:mco}.

It proves that Steenrod coalgebras of the form $C(X)$, for a simplicial
set $X$ are \emph{not nilpotent:} iterated coproducts of \emph{simplices}
never ``peter out''. This turns out to provide a way to ``recognize''
simplices among the elements of $C(X)$.\end{rem*}
\begin{proof}
Recall that $(\rs 2)_{n}=\ints[\ints_{2}]$ generated by $e_{n}=[\underbrace{(1,2)|\cdots|(1,2)}_{n\text{ factors}}]$.
Let $T$ be the generator of $\ints_{2}$ --- acting on $C\otimes C$
by swapping the copies of $C$.

We assume that $\xi(e_{i}\otimes C(\Delta^{j}))\subset C(\Delta^{j})\otimes C(\Delta^{j})$
so that 
\begin{equation}
i>j\implies\xi(e_{i}\otimes C(\Delta^{j}))=0\label{eq:big-diag-condition}
\end{equation}
\textit{\emph{We extend this to }}$C\otimes C$ via
\[
\Phi_{k}=\varphi_{k}\otimes1+\iota_{k}\circ\epsilon\otimes\varphi_{k}
\]
and use the Koszul convention on signs. Note that $\Phi_{k}^{2}=0$.
As in section~4 of \cite{smith:1994}, if $e_{0}\in\rs 2$ is the
$0$-dimensional generator, we define
\[
\xi:\rs 2\otimes C\to C\otimes C
\]
 inductively by
\begin{eqnarray}
\xi(e_{0}\otimes[i]) & = & [i]\otimes[i]\nonumber \\
\xi(e_{0}\otimes[0,\dots,k]) & = & \sum_{i=0}^{k}[0,\dots,i]\otimes[i,\dots,k]\label{eq:big-diag1}
\end{eqnarray}
Let $\sigma=\Delta^{k}$ and inductively define
\begin{align*}
\xi(e_{k}\otimes\sigma) & =\Phi_{k}(\xi(\partial e_{k}\otimes\sigma)+(-1)^{k}\Phi_{k}\xi(e_{k}\otimes\partial\sigma)\\
 & =\Phi_{k}(\xi(\partial e_{k}\otimes\sigma)
\end{align*}
because of equation~\ref{eq:big-diag-condition}. 

Expanding $\Phi_{k}$, we get
\begin{align}
\xi(e_{k}\otimes\sigma) & =(\varphi_{k}\otimes1)(\xi(\partial e_{k}\otimes\sigma))+(i\circ\epsilon\otimes\varphi_{k})\xi(\partial e_{k}\otimes\sigma)\nonumber \\
 & =(\varphi_{k}\otimes1)(\xi(\partial e_{k}\otimes\sigma))\label{eq:big-diag2}
\end{align}
 because $\varphi_{k}^{2}=0$ and $\varphi_{k}\circ i\circ\epsilon=0$. 

Noting that $\partial e_{k}=(1+(-1)^{k}T)e_{k-1}\in\rs 2$, we get
\begin{align*}
\xi(e_{k}\otimes\sigma) & =(\varphi_{k}\otimes1)(\xi(e_{k-1}\otimes\sigma)+(-1)^{k}(\varphi_{k}\otimes1)\cdot T\cdot\xi(e_{k-1}\otimes\sigma)\\
 & =(-1)^{k}(\varphi_{k}\otimes1)\cdot T\cdot\xi(e_{k-1}\otimes\sigma)
\end{align*}
again, because $\varphi_{k}^{2}=0$ and $\varphi_{k}\circ\iota_{k}\circ\epsilon=0$.
We continue, using equation~\ref{eq:big-diag2} to compute $\xi(e_{k-1}\otimes\sigma)$:
\begin{align*}
\xi(e_{k}\otimes\sigma)= & (-1)^{k}(\varphi_{k}\otimes1)\cdot T\cdot\xi(e_{k-1}\otimes\sigma)\\
= & (-1)^{k}(\varphi_{k}\otimes1)\cdot T\cdot(\varphi_{k}\otimes1)\biggl(\xi(\partial e_{k-1}\otimes\sigma)\\
 & +(-1)^{k-1}\xi(e_{k-1}\otimes\partial\sigma)\biggr)\\
= & (-1)^{k}\varphi_{k}\otimes\varphi_{k}\cdot T\cdot\biggl(\xi(\partial e_{k-1}\otimes\sigma)\\
 & +(-1)^{k-1}\xi(e_{k-1}\otimes\partial\sigma)\biggr)
\end{align*}
If $k-1=0$, then the left term vanishes. If $k-1=1$ so $\partial e_{k-1}$
is $0$-dimensional then equation~\ref{eq:big-diag1} gives $f(\partial e_{1}\otimes\sigma)$
and this vanishes when plugged into $\varphi\otimes\varphi$. If $k-1>1$,
then $\xi(\partial e_{k-1}\otimes\sigma)$ is in the image of $\varphi_{k}$,
so it vanishes when plugged into $\varphi_{k}\otimes\varphi_{k}$.

In \emph{all} cases, we can write
\begin{align*}
\xi(e_{k}\otimes\sigma) & =(-1)^{k}\varphi_{k}\otimes\varphi_{k}\cdot T\cdot(-1)^{k-1}\xi(e_{k-1}\otimes\partial\sigma)\\
 & =-\varphi_{k}\otimes\varphi_{k}\cdot T\cdot\xi(e_{k-1}\otimes\partial\sigma)
\end{align*}
If $\xi(e_{k-1}\otimes\Delta^{k-1})=\eta_{k-1}\Delta^{k-1}\otimes\Delta^{k-1}$
(the inductive hypothesis), then 
\begin{multline*}
\xi(e_{k-1}\otimes\partial\sigma)=\\
\sum_{i=0}^{k}\epsilon_{k-1}\cdot(-1)^{i}[0,\dots,i-1,i+1,\dots k]\otimes[0,\dots,i-1,i+1,\dots k]
\end{multline*}
and the only term that does \emph{not} get annihilated by $\varphi_{k}\otimes\varphi_{k}$
is 
\[
(-1)^{k}[0,\dots,k-1]\otimes[0,\dots,k-1]
\]
 (see equation~\ref{eq:simplex-contracting-cochain}). We get
\begin{align*}
\xi(e_{k}\otimes\sigma) & =\eta_{k-1}\cdot\varphi_{k}\otimes\varphi_{k}\cdot T\cdot(-1)^{k-1}[0,\dots,k-1]\otimes[0,\dots,k-1]\\
 & =\eta_{k-1}\cdot\varphi_{k}\otimes\varphi_{k}(-1)^{(k-1)^{2}+k-1}[0,\dots,k-1]\otimes[0,\dots,k-1]\\
 & =\eta_{k-1}\cdot(-1)^{(k-1)^{2}+2(k-1)}\varphi[0,\dots,k-1]\otimes\varphi[0,\dots,k-1]\\
 & =\eta_{k-1}\cdot(-1)^{k-1}[0,\dots,k]\otimes[0,\dots,k]\\
 & =\eta_{k}\cdot[0,\dots,k]\otimes[0,\dots,k]
\end{align*}
where the sign-changes are due to the Koszul Convention. We conclude
that $\eta_{k}=(-1)^{k-1}\eta_{k-1}$.
\end{proof}

\section{\label{sec:Proof-of-theorem}Proof of 5.7}

If $\ring$ is a ring satisfying remark~\ref{assu:rdef}, let $\field$
denote its field of fractions (either $\ints_{p}$ or $\rats$). 

We begin with a general result:
\begin{lem}
\label{lem:diagonals-linearly-independent}Let $C$ be a free $\ring$-module
and let 
\[
\hat{C}=\ring\oplus\prod_{i=1}^{\infty}C^{\otimes i}
\]
--- where tensor-products are over $\ints$.

Let $e:C\to\hat{C}$ be the function that sends $c\in C$ to
\[
(1,c,c\otimes c,c\otimes c\otimes c,\dots)\in\hat{C}
\]
For any integer $t>1$ and any set $\{c_{1},\dots,c_{t}\}\in C$ of
distinct, nonzero elements, the elements 
\[
\{e(c_{1}),\dots,e(c_{t})\}\in\hat{C}\otimes\field
\]
are linearly independent over $\rfrac$. It follows that $e$ defines
an injective function
\[
\bar{e}:\ring[C]\to\hat{C}
\]
where $\ring[C]$ is the group-ring of $C$ (in which $C$ is regarded
only as an abelian group).\end{lem}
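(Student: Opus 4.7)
The plan is to reduce the problem to a finite-rank situation and then to extract linear independence by polynomial interpolation. First I would note that the $c_1,\dots,c_t$ involve only finitely many $\ring$-basis elements of $C$, so without loss of generality $C=\ring^N$ for some finite $N$. Tensoring with $\field$ gives $C\otimes_\ring\field=\field^N$, and since $\ring\hookrightarrow\field$ is injective in both cases of remark~\ref{assu:rdef} (trivially when $\ring=\ints_p=\field$, and by torsion-freeness when $\ring\subset\rats$), the $c_j$ remain distinct as points of $\field^N$.

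Next, assuming a relation $\sum_{j=1}^{t}\alpha_j e(c_j)=0$ in $\hat{C}\otimes\field$ with $\alpha_j\in\field$, I would apply the projection $\hat{C}\otimes\field\to(C^{\otimes i})\otimes\field$ to obtain $\sum_j\alpha_j c_j^{\otimes i}=0$ for every $i\ge0$. Because $\ring\otimes_\ints\ring=\ring$ in both cases, one may identify $(C^{\otimes i})\otimes_\ring\field$ with $(\field^N)^{\otimes_\field i}=\field^{N^i}$. Pairing each tensor factor with a coordinate functional $\pi_k\colon\field^N\to\field$ yields
\[
\sum_j\alpha_j(c_j)_{k_1}\cdots(c_j)_{k_i}=0
\]
for every multi-index $(k_1,\dots,k_i)$. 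Taking $\field$-linear combinations over multi-indices (the $i=0$ slot handling the constant), this is equivalent to
\[
\sum_j\alpha_j P(c_j)=0
\]
for \emph{every} polynomial $P\in\field[x_1,\dots,x_N]$.

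The last step will be Lagrange-style interpolation. For each pair $j\ne k$ pick a coordinate $\ell_{jk}$ where $c_j$ and $c_k$ differ and form
\[
Q_{jk}(x)=\frac{x_{\ell_{jk}}-(c_k)_{\ell_{jk}}}{(c_j)_{\ell_{jk}}-(c_k)_{\ell_{jk}}},
\]
which takes the value $1$ at $c_j$ and $0$ at $c_k$. Then $P_j=\prod_{k\ne j}Q_{jk}$ satisfies $P_j(c_m)=\delta_{jm}$, and substituting $P=P_j$ in the displayed identity forces $\alpha_j=0$. The injectivity of $\bar e$ is then immediate: a nonzero element of $\ring[C]$ can be written $\sum n_j c_j$ with distinct $c_j$ and $n_j\in\ring\setminus\{0\}$, and since $\ring\hookrightarrow\field$ the coefficients $n_j$ remain nonzero in $\field$, so $\bar e(\sum n_j c_j)\ne0$ by the linear independence just established.

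The only point requiring real care will be verifying that the interpolation works uniformly for $\ring=\ints_p$ (finite field) and for $\ring\subset\rats$ (infinite field). This is the main potential obstacle, because a naive approach using a single separating functional $\phi\colon C\to\field$ plus a Vandermonde argument can fail over small $\ints_p$. The device of using \emph{tensor products} of coordinate functionals, which produce all monomial evaluations rather than merely powers of one value, sidesteps this difficulty: the interpolating polynomials $P_j$ exist over any field, with no hypothesis of cardinality or algebraic closure on $\field$.
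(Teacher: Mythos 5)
Your proof is correct, but it takes a genuinely different route from the paper's. The paper truncates $\hat{C}\otimes\field$ at tensor-length $t$ and maps it into the symmetric algebra $S(C\otimes\field)=\field[b_{1},b_{2},\dots]$ on an $\ring$-basis of $C$; there each $e(c_{i})$ becomes the truncated geometric series $1+f(c_{i})+\cdots+f(c_{i})^{t-1}$ in the \emph{distinct linear forms} $f(c_{i})$, and independence follows from the Vandermonde determinant $\prod_{i<j}(f(c_{i})-f(c_{j}))\ne0$, computed in the integral domain $\field[b_{1},b_{2},\dots]$ (so the small-field issue you worry about never arises: the ``nodes'' are polynomials, not scalars). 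You instead argue dually: after reducing to finite rank, you project the relation to each tensor degree, pair with coordinate functionals to show that the functional $P\mapsto\sum_{j}\alpha_{j}P(c_{j})$ annihilates every polynomial, and then kill the coefficients with explicit multivariate indicator polynomials $P_{j}$ with $P_{j}(c_{m})=\delta_{jm}$. What each approach buys: the paper's argument is shorter once the symmetric-algebra map is set up and needs no finite-rank reduction, while yours is more elementary and explicit, avoids the truncation and the passage to the symmetric algebra, works verbatim when one of the $c_{j}$ is $0$ (which is convenient for the group-ring statement, where the paper is terse), and makes the uniformity over $\field=\ints_{p}$ versus $\field=\rats$ completely transparent. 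The only minor points to tidy are bookkeeping ones: state the projection onto the finitely many relevant basis coordinates that justifies the reduction to $C=\ring^{N}$, and phrase the identification of tensor factors over $\ints$ versus $\ring$ via $\field\otimes_{\ints}\field=\field$ (as the paper does) rather than $\ring\otimes_{\ints}\ring=\ring$.
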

\begin{proof}
We will construct a vector-space morphism 
\begin{equation}
f:D=\hat{C}\otimes\field\to V\label{eq:diagonals-linearly-independent1}
\end{equation}
in several stages, with the property that the images, $\{f(e(c_{i}))\}$,
are linearly independent. 

Note that $\field\otimes_{\ints}\field=\field$ so that $\field\otimes C^{\otimes i}=(\field\otimes C)^{\otimes i}$
for all $i$. We begin with the ``truncation morphism''
\[
r_{t}:D\to\field\oplus\bigoplus_{i=1}^{t-1}\field\otimes C^{\otimes i}=\field\oplus\bigoplus_{i=1}^{t-1}(\field\otimes C)^{\otimes i}=D_{t-1}
\]
which maps $C^{\otimes1}\otimes\field$ isomorphically. Now we include
$D_{t-1}$ in the tensor algebra $T(C\otimes\field)$ and project
that onto the symmetric algebra $S(C\otimes\field)$:
\[
g:D_{t-1}\hookrightarrow T(C\otimes\mathbb{F})\twoheadrightarrow S(C\otimes\mathbb{F})=\mathbb{F}[b_{1},b_{2},\dots]
\]
where the set, $\{b_{i}\}$, is an $\ring$-basis for $C$, and the
target on the right is a polynomial ring with the $b_{i}$ as \emph{indeterminates.}
If $c\in C\otimes\mathbb{F}\subset D_{t-1}$, and $f=g\circ r_{t}$,
then $f(c)$ is a linear combination of the $b_{i}$. If $c_{1},\dots,c_{j}\in C$
with $j\le t$, then $c_{1}\otimes\cdots\otimes c_{j}\in C^{\otimes j}\subset D_{t-1}$
and 
\[
g(c_{1}\otimes\cdots\otimes c_{j})=g(c_{1})\cdots g(c_{j})\in\rfrac[b_{1},b_{2},\dots]
\]
It is not hard to see that 
\[
p_{i}=f(e(c_{i}))=1+f(c_{i})+\cdots+f(c_{i})^{t-1}\in\rfrac[b_{1},b_{2},\dots]
\]
 for $i=1,\dots,t$. Since the $f(c_{i})$ are \emph{linear} in the
indeterminates $b_{i}$, the degree-$j$ component (in the indeterminates)
of $f(e(c_{i}))$ is precisely $f(c_{i})^{j}$. It follows that a
linear dependence-relation
\[
\sum_{i=1}^{t}\alpha_{i}\cdot p_{i}=0
\]
with $\alpha_{i}\in\mathbb{F}$, holds if and only if
\[
\sum_{i=1}^{t}\alpha_{i}\cdot f(c_{i})^{j}=0
\]
 for all $j=0,\dots,t-1$. This is equivalent to $\det M=0$, where
\[
M=\left[\begin{array}{cccc}
1 & 1 & \cdots & 1\\
f(c_{1}) & f(c_{2}) & \cdots & f(c_{t})\\
\vdots & \vdots & \ddots & \vdots\\
f(c_{1})^{t-1} & f(c_{2})^{t-1} & \cdots & f(c_{t})^{t-1}
\end{array}\right]
\]
 Since $M$ is the transpose of the Vandermonde matrix, we get
\[
\det M=\prod_{1\le i<j\le t}(f(c_{i})-f(c_{j}))
\]
Since $f|C\otimes_{\ints}\rfrac\subset\hat{C}\otimes_{\ints}\rfrac$
is \emph{injective,} it follows that this \emph{only} vanishes if
there exist $i$ and $j$ with $i\ne j$ and $c_{i}=c_{j}$. The second
conclusion follows.
\end{proof}
This leads to the proof:
\begin{cor}
\label{cor:simplicial-abelian-injective}If $X$ is a pointed, reduced
degeneracy-free simplicial set, then then the morphism of Steenrod
coalgebras
\[
F_{X}:C(\pz X)\otimes\ring\to L_{\freeop}\left(C(X)\otimes\ring\right)
\]
in definition~\ref{def:pcmap} is injective.\end{cor}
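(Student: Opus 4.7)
The plan is to pin down each coefficient of a chain $z \in C(\pz X) \otimes \ring$ from the coordinates of $F_X(z)$ in the cofree coalgebra, and then extract a contradiction from $F_X(z) = 0$ via the Vandermonde-style injectivity established in lemma~\ref{lem:diagonals-linearly-independent}. The engine is that the operadic action of $\freeop$ on a $k$-simplex produces, via proposition~\ref{pro:simplicespropertyS}, arbitrarily high tensor powers of that simplex.

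\emph{Step 1: iterated diagonals in $\freeop$.} For each $n \ge 2$, I would form an element $E_n \in \freeop(n)$ by iterating the $\circ_1$-composition of the generator $e_k \in \rs 2 = \freeop(2)$ of degree $k$, namely $E_2 = e_k$ and $E_n = e_k \circ_1 E_{n-1}$. By proposition~\ref{prop:composition-coalgebra}, the associated operation on any $\freeop$-coalgebra is the iterated diagonal $(\Delta_{e_k} \otimes 1 \otimes \cdots \otimes 1) \circ \cdots \circ \Delta_{e_k}$. Applying this repeatedly to a $k$-simplex $\sigma \in (\pz X)_k$ and invoking proposition~\ref{pro:simplicespropertyS} at each step (legitimate because every tensor factor remains the same $k$-simplex $\sigma$, so the hypothesis is preserved) yields
\[
\beta_n(E_n \otimes \sigma) = \epsilon_n\, \sigma^{\otimes n} \in C(\pz X)^{\otimes n}
\]
for a non-zero integer sign $\epsilon_n = \pm 1$ computed from $\eta_k = (-1)^{k(k-1)/2}$ and the Koszul convention.

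\emph{Step 2: unpack $F_X$.} By diagram~\ref{eq:classifying-map} of remark~\ref{rem:cofree-property}, the $n$-th coordinate of $F_X$ is $\homzs{n}(1, \gamma_X^{\otimes n})$ post-composed with the $n$-th component of the structure map of $C(\pz X) \otimes \ring$. Hence, for any $\ring$-linear combination $z = \sum_i \alpha_i \sigma_i$ of pairwise distinct $k$-simplices $\sigma_i$ of $\pz X$, the $n$-th coordinate of $F_X(z)$ evaluated on $E_n$ is
\[
\epsilon_n \sum_i \alpha_i\, \gamma_X(\sigma_i)^{\otimes n} \ \in\ (C(X) \otimes \ring)^{\otimes n}.
\]

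\emph{Step 3: apply the Vandermonde lemma.} Set $c_i = \gamma_X(\sigma_i) \in C(X) \otimes \ring = \moore{\pz X}$. Since $\gamma_X$ sends a simplex of $\pz X$ to the same element regarded in the abelian group $\moore{\pz X}$, distinct $\sigma_i$ give distinct $c_i$; and after separating off the basepoint (which is the zero element in the pointed, reduced $\pz X$) every remaining $c_i$ is non-zero. If $F_X(z) = 0$, then $\sum_i \alpha_i c_i^{\otimes n} = 0$ in $(C(X) \otimes \ring)^{\otimes n}$ for every $n \ge 1$. Lemma~\ref{lem:diagonals-linearly-independent} applied in fixed chain-degree $k$ now forces all $\alpha_i = 0$. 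The only point to check is that the cofree product of remark~\ref{rem:cofree-property} starts at $n = 1$ rather than $n = 0$; this merely shifts the Vandermonde matrix by one row, multiplying its determinant by $\prod_i f(c_i)$, which is still non-zero because $f$ is injective on $C \otimes \field$ and each $c_i \ne 0$.

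\emph{Main obstacle.} The delicate point is Step 1 --- verifying that the iterated compositions produce exactly a non-zero scalar multiple of $\sigma^{\otimes n}$, with a sign that is uniform across all simplices of a fixed dimension. This is a bookkeeping exercise in tracking Koszul signs through each instance of proposition~\ref{prop:composition-coalgebra} and combining them with the explicit formula $\eta_k = (-1)^{k(k-1)/2}$ of proposition~\ref{pro:simplicespropertyS}. The remaining issues --- the one-step shift in the Vandermonde argument and the bookkeeping around the basepoint simplex in $\pz X$ --- are routine.
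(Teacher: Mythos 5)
Your proposal is correct and follows essentially the same route as the paper's own proof: iterated $\circ_{1}$-composites of $e_{k}$ in $\freeop$ (the paper's $Z_{k}$), proposition~\ref{pro:simplicespropertyS} together with proposition~\ref{prop:composition-coalgebra} to produce $\pm\sigma^{\otimes n}$, unwinding $F_{X}$ through the classifying-map diagram, evaluation into $\prod_{n}C^{\otimes n}$, and the Vandermonde lemma~\ref{lem:diagonals-linearly-independent}. Your explicit handling of the basepoint simplex and of the $n=0$ versus $n=1$ indexing is, if anything, slightly more careful than the paper's.
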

\begin{proof}
If $E=C(\pz X)\otimes\ring$ and $C=C(X)\otimes\ring$, the results
of \cite{smith:cofree} imply that 
\[
L_{\freeop}E\subset E\oplus\prod_{n=0}^{\infty}\homzs n(\freeop(n),E^{\otimes n})
\]
and the map $\gamma_{X}$ in definition~\ref{def:pcmap} induces
a commutative diagram
\begin{equation}
\xyR{20pt}\xymatrix{{C(\pz X)}\ar[r]^{\alpha}\ar[rd]_{F_{X}} & {L_{\freeop}E}\ar[r]\ar[d]^{L_{\freeop}\gamma_{X}} & {\prod_{n=0}^{\infty}\homzs n(\freeop(n),E^{\otimes n})}\ar[d]^{\prod_{n=0}^{\infty}\homzs n(1,\gamma_{X}^{\otimes n})}\\
{} & {L_{\freeop}C}\ar[r] & {\prod_{n=0}^{\infty}\homzs n(\freeop(n),C^{\otimes n})}
}
\label{eq:proof-commut-dia}
\end{equation}
 where we follow the convention that $\homzs 0(\freeop(0),E^{0})=\ring$,
$\homzs 1(\freeop(1),E)=E$. Here
\begin{enumerate}
\item $\alpha$ is induced by the identity map of $E$ (regarded as a chain-complex),
\item $\gamma_{X}$ and $F_{X}$ are defined in definition~\ref{def:pcmap}.
\end{enumerate}
Let $p_{n}$ be projection to a factor
\[
p_{n}:\prod_{n=0}^{\infty}\homzs n(\freeop(n),E^{\otimes n})\to\homzs n(\freeop(n),E^{\otimes n})
\]
If $\sigma\in$ is an $m$-simplex defining an element $[\sigma]\in E_{m}$,
proposition~\ref{pro:simplicespropertyS} implies that 
\[
p_{2}\circ\alpha([\sigma])=\xi_{m}\cdot(e_{m}\mapsto[\sigma]\otimes[\sigma])\in\homzs 2(\rs 2,E\otimes E)
\]
where $\xi_{m}=(-1)^{m(m-1)/2}$ (see proposition~\ref{pro:simplicespropertyS})
and $\freeop(2)=\rs 2$.

Let $Z_{2}=e_{m}$ and $Z_{k}=\underbrace{e_{m}\circ_{1}\cdots\circ_{1}e_{m}}_{k-1\text{ iterations}}\in\freeop(k)$
be the operad-composite (see definition~\ref{def:operad-comps} and
proposition~2.17 of \cite{smith:1994}). The fact that operad-composites
map to composites of \emph{coproducts} (see proposition~\ref{prop:composition-coalgebra})
in a coalgebra implies that 
\[
p_{k}\circ\alpha([\sigma])=\xi_{m}^{k-1}\cdot(Z_{k}\mapsto\underbrace{[\sigma]\otimes\cdots\otimes[\sigma]}_{k\text{ factors}})\in\homzs k(\freeop(k),E^{\otimes k})
\]

If $\{\sigma_{1},\dots,\sigma_{t}\}\in\pz X$ are \emph{distinct}
$m$-simplices then $\{\gamma_{X}[\sigma_{1}],\dots\gamma_{X}[\sigma_{t}]\}\in C=C(X)\otimes\ring$
are \emph{also} distinct (although no longer \emph{generators}). 

Their images in $\prod_{n=0}^{\infty}\homzs n(\freeop(n),C^{\otimes n})$
will have the property that 
\begin{multline*}
p_{k}\circ F_{X}([\sigma_{i}])=\xi_{m}^{k-1}\cdot(Z_{k}\mapsto\underbrace{\gamma_{X}[\sigma_{i}]\otimes\cdots\otimes\gamma_{X}[\sigma_{i}]}_{k\text{ factors}})\\
\in\homzs k(\freeop(k),C(X)^{\otimes k})
\end{multline*}

Evaluation of elements of $\prod_{n=1}^{\infty}\homzs n(\freeop(n),C^{\otimes n})$
on the sequence $(\xi_{m}\cdot Z_{2},\xi_{m}^{2}\cdot Z_{3},\xi_{m}^{3}\cdot Z_{4},\dots)$
gives a homomorphism of $\ints$-modules
\[
j:\prod_{n=0}^{\infty}\homzs n(\freeop(n),C^{\otimes n})\to\prod_{n=0}^{\infty}C^{\otimes n}
\]
and $j\circ\gamma_{X}(\sigma_{i})$ is $e(\gamma_{X}[\sigma_{i}])$,
as defined in lemma~\ref{lem:diagonals-linearly-independent}. The
conclusion follows from lemma~\ref{lem:diagonals-linearly-independent}.
\end{proof}

\section{References}

\bibliographystyle{amsplain}

\providecommand{\bysame}{\leavevmode\hbox to3em{\hrulefill}\thinspace}
\providecommand{\MR}{\relax\ifhmode\unskip\space\fi MR }
\providecommand{\MRhref}[2]{%
  \href{http://www.ams.org/mathscinet-getitem?mr=#1}{#2}
}
\providecommand{\href}[2]{#2}


    \end{document}